\documentclass[10pt,reqno]{amsart}
\usepackage{amsmath,amsfonts,amsthm,amscd,amssymb,graphicx}
\usepackage[utf8]{inputenc}
\usepackage{amsbsy}
\usepackage{graphicx}
\usepackage{hyperref}
\usepackage[margin=1in]{geometry}
\usepackage{subcaption}
\usepackage{enumitem}
\usepackage{float}
\usepackage{mathtools}
\usepackage{dsfont}
\usepackage{upgreek}
\usepackage{stmaryrd}

%%%%%%%%%%%%%%%%%%%%%%%%%%%%%%%%%
\usepackage{color}
\definecolor{myblue}{rgb}{.8, .8, 1}

\usepackage{amsmath}
\usepackage{empheq}
\usepackage{comment}

\newlength\mytemplen
\newsavebox\mytempbox

\makeatletter
\newcommand\mybluebox{%
    \@ifnextchar[%]
       {\@mybluebox}%
       {\@mybluebox[0pt]}}

\def\@mybluebox[#1]{%
    \@ifnextchar[%]
       {\@@mybluebox[#1]}%
       {\@@mybluebox[#1][0pt]}}

\def\@@mybluebox[#1][#2]#3{
    \sbox\mytempbox{#3}%
    \mytemplen\ht\mytempbox
    \advance\mytemplen #1\relax
    \ht\mytempbox\mytemplen
    \mytemplen\dp\mytempbox
    \advance\mytemplen #2\relax
    \dp\mytempbox\mytemplen
    \colorbox{myblue}{\hspace{1em}\usebox{\mytempbox}\hspace{1em}}}

\makeatother

%%%%%%%%%%%%%%%%%%%%%%%%%%%%%%%%%%%%%%%

\usepackage{tikz}
\usetikzlibrary{decorations.shapes}
\tikzset{paint/.style={ draw=#1!50!black, fill=#1!50 },
    decorate with/.style=
    {decorate,decoration={shape backgrounds,shape=#1,shape size=2mm}}}
\usepackage{todonotes}

\definecolor{skyblue}{rgb}{0.85,0.85,1}

\usepackage{accents}

 % Notation for number of nodal domains

%% layout
\setlength{\parskip}{1.0ex plus0.2ex minus0.2ex}
\setlength{\parindent}{0.0in}

%% environments
\newtheorem{theorem}{Theorem}
\newtheorem{prop}[theorem]{Proposition}
\newtheorem{cor}[theorem]{Corollary}
\newtheorem{define}[theorem]{Definition}
\newtheorem{lemma}[theorem]{Lemma}
\newtheorem{rem}[theorem]{Remark}
\newtheorem{assumption}[theorem]{Assumption}

%% counters

%% definitions
    % real part
    % imaginary part

             % complex numbers
             % natual numbers
\newcommand{\bbR}{\mathbb{R}}             % reals
           % circle
		    % torus
             % integers

           % Landau O
           % Landau o
           % derivatives
           % Euler constant
           % imaginary unit
\newcommand{\p}{\partial}				% partial derivative

\newcommand{\eps}{\epsilon}

\newcommand{\pa}{\partial}

%% spatial domains

%% spaces

%% operators

%\DeclareMathOperator{\ker}{ker}
%\renewcommand{\det}{{\operatorname{det}}}

%% cal letters

%% bold letters

\newcommand{\eqdef}{\vcentcolon =}
\newcommand{\defeq}{= \vcentcolon}

\newcommand{\norm}[1]{\left\lVert#1\right\rVert}

%%%%%%%%%%%%%%%%%%%%%%%%%%%%%%%%%%%%%%%%%%%%%%%%%%%%%%%%%%%%%%%%%

\numberwithin{equation}{section}
\numberwithin{theorem}{section}

\begin{document}
\title{Nodal Deficiency of Neumann Eigenfunctions on a Symmetric Dumbbell Domain}
%\title{}
\author{Thomas Beck and Andrew Lyons}%\email{}\address{}

\address{Department of Mathematics, John Mulcahy Hall\\
 Fordham University }
\email{tbeck7@fordham.edu}

\address{Department of Mathematics, Mathematics \& Science Center\\
 Emory University}
\email{ahlyons@emory.edu}

\begin{abstract}
We study the nodal deficiency of pairs of Neumann eigenfunctions defined over symmetric dumbbell domains. As the width of the connecting neck shrinks, these eigenfunctions converge to Neumann eigenfunctions defined over the ends of the dumbbell, together with a one-dimensional Sturm-Liouville solution in the neck. In this limit, the corresponding eigenvalues become degenerate, with multiplicity two. The nodal deficiency, defined as the difference between the eigenvalue index and the nodal domain count, is known by the Courant nodal domain theorem to be nonnegative. We show that, for small neck widths, the nodal deficiencies of the dumbbell eigenfunctions are no smaller than the nodal deficiencies of the limiting eigenfunctions in the ends, and we provide conditions under which equality is achieved. As a consequence, we establish a criterion for identifying eigenfunctions of zero nodal deficiency for the dumbbell domain.
\end{abstract}

\maketitle

%%%%%%%%%%%%%%%%%%%%%%%%%%%%%%%%%%
\section{Introduction} \label{sec:Intro}

\quad We study the nodal sets of eigenfunctions of the Neumann Laplacian corresponding to a perturbation of an eigenvalue of multiplicity $2$. The eigenfunctions are defined over a symmetric dumbbell domain, interpreted as a perturbation of two disjoint by a thin neck connecting them. For a general open, bounded domain $\Omega\subset \mathbb{R}^2$ with piecewise smooth boundary, we consider a Neumann eigenfunction $u$ satisfying
\begin{equation}\label{eq:PDE}
    \begin{cases}
        \left(\Delta+\lambda\right)u=0, & \textrm{in } \Omega \\ \p_\nu u=0, & \textrm{on } \p\Omega
    \end{cases}
\end{equation}
with eigenvalue $\lambda$. Here $\p_\nu$ denotes the normal derivative along the boundary $\p\Omega$. The \textit{nodal set} of $u$ is then the closure of the level set $\{(x,y)\in\Omega:u(x,y)=0\}$, partitioning $\Omega$ into connected regions, called \textit{nodal domains}, where $u$ is either strictly positive or negative. We say that the nodal set of $u$ contains no crossings if it consists of disjoint closed curves in the closure $\overline{\Omega}$. Laplacian eigenfunctions model vibrational modes in various media, and the nodal set of $u$ describes points that remain fixed under a vibration with energy given by $\lambda$. The nodal set of an eigenfunction depends heavily on the geometry of $\Omega$ and is sensitive to domain perturbations. For this reason, their study is a classical and active area of research \cite{C76,Z17}. 

\quad Let $\llbracket u,\Omega\rrbracket$ denote the number of nodal domains of the function $u$ satisfying (\ref{eq:PDE}) in its domain $\Omega$. The Courant nodal domain  theorem \cite{CH62} asserts that $\llbracket u,\Omega\rrbracket$ cannot exceed the minimal index of the eigenvalue $\lambda$ in the Neumann spectrum of $\Omega$. If an eigenfunction satisfies $\llbracket u,\Omega\rrbracket =\textrm{index}(\lambda,\Omega)\eqdef \min\{j:\lambda=\lambda_j\}$, then we label both the eigenfunction $u$ and the corresponding eigenvalue $\lambda$ as \textit{Courant sharp}. More generally, the nodal deficiency
\begin{equation}\label{eq:NodDef}
    \upvartheta(u,\Omega)\eqdef \textrm{index}(\lambda,\Omega)-\llbracket u,\Omega\rrbracket \geq 0
\end{equation}
serves as a measure of Courant sharpness. Over domains of dimension $1$, the eigenfunctions and eigenvalues of (\ref{eq:PDE}) are all Courant sharp; the nodal set of the $j$th eigenfunction, labeled according to the  position of the eigenvalue in the spectrum, consists of $j-1$ interior points and separates exactly $j$ nodal domains. In particular, this property is stable under perturbations of the interval domain. In contrast, there are only finitely many Courant sharp eigenfunctions defined over domains of higher dimension \cite{deponti23pleijel,lena-pleijel,P56,P09}. Generically, nodal sets have no crossings, and the nodal domain count should be stable under domain perturbations \cite{U76}. However, in some cases with nodal set crossings, the number of nodal domains is unstable and may decrease (but not increase) under generic domain deformations \cite{BGM,L25,MMS25}. Hence, the nodal deficiency is a difficult quantity to compute over a general domain, where the eigenfunctions and eigenvalues may not be explicit. Another approach developed to understand nodal deficiency involves writing it in terms of a Morse index for Dirichlet-to-Neumann maps of the eigenfunction nodal domains \cite{berkolaiko2022-1,berkolaiko2022-2,berkolaiko2019,berkolaiko2012}. %To highlight this direction of study, we emphasize the domain dependence in the notation of equation (\ref{eq:NodDef}).
%and there is particular interest in understanding how the nodal deficiency behaves with respect to domain perturbations, as discussed in Section \ref{sec:CourantSharpMotiv}. 

\quad We consider eigenfunctions of (\ref{eq:PDE}) defined over a symmetric dumbbell domain, consisting of two disjoint, bounded, planar domains connected by a thin neck, precisely defined below in Section \ref{sec:Domain}. The stochastic block model for a data set generates a graph with distinct clusters, or sub-graphs that are weakly connected to one another, and dumbbell domains, or more generally chain domains consisting of a finite number of sub-domains joined by thin necks, serve as a continuum model for large data sets with this structure \cite{singer2017spectral,GS18}. The nodal domains associated to low-energy eigenvalues provide a natural partition that is indicative of community structure. In \cite[Theorem $1.1$]{BCM-pleijel}, Beck, Canzani, and Marzuola provided an upper bound, independent of the structure of the thin connecting necks, for the number of nodal domains produced by an eigenfunction of (\ref{eq:PDE}) over a chain domain. They accomplished this by identifying several geometric constants of the disjoint planar domains that control the eigenvalues of Courant sharp Neumann eigenfunctions, and then by applying the Faber-Krahn Theorem to obtain lower bounds for the area of nodal domains in terms of these geometric constants. Their results show that the number of Courant sharp eigenfunctions of chain domains is finite, with a bound on their number independent of the width of the necks. Their work also indicates how the qualitative nodal behavior of Courant sharp eigenfunctions defined over a chain domain is unaffected by the widths of the connecting necks.

\quad The symmetry of our dumbbell domain implies that, as the neck degenerates, the limiting spectrum contains eigenvalues with even multiplicity. To understand how the nodal deficiency varies with respect to the neck width, we first establish the ordering of eigenvalue branches that share a limit. In \cite[Theorem $2.5$]{A95}, Arrieta established the first-order variation for Neumann eigenvalues of domains connected by a thin neck. However, this work relied on the assumption that the limiting eigenvalue was simple, which is broken by the symmetry of our domain. Following techniques used in \cite{A95, J93}, we establish analogous estimates for limiting eigenvalues on the separated domain with multiplicity $2$. Meanwhile, accompanying eigenfunction estimates provide control over the number of nodal domains produced by the corresponding eigenfunctions as the neck width varies. In combination, these quantitative results further our understanding of nodal deficiency under small domain deformations. 

\subsection{Domain definition and statement of main results.}\label{sec:Domain} We study the nodal deficiency of Neumann eigenfunctions of a symmetric dumbbell domain denoted by $\Omega_{\eps}$. This dumbbell domain consists of two copies of the same piecewise smooth domain, joined by a thin neck that degenerates to a line segment as $\eps$ tends to 0. The Neumann spectrum of this domain contains pairs of eigenvalues degenerating to the same limit. The formal definition of $\Omega_{\eps}$ is then as follows:

\begin{define}[The Separated Domain, $\Omega_{0}$] \label{defn:dumbbell}
    Let $\Omega_L$ and $\Omega_R$ be a pair of disjoint, bounded, planar domains, with smooth boundaries except for a finite number of vertices. We define $\Omega_0 = \Omega_L\cup\Omega_R$ as the separated domain, symmetric across the vertical line $\{x=\tfrac{1}{2}\}$ and disjoint from the line segment $[0,1]\times\{0\}$. Moreover, for some fixed $\ell>0$ and setting $p_0=(0,0)$ and $p_1=(1,0)$, the boundary $\p\Omega_0$ is locally flat near $p_0,p_1$ in the sense that 
\begin{equation}
    \left\{(0,y): |y|<\ell\right\}\subset\p\Omega_L \quad \textrm{and} \quad \left\{(1,y): |y|<\ell\right\}\subset\p\Omega_R. \nonumber
\end{equation}
\end{define}

\begin{define}[The Dumbbell Domain, $\Omega_\eps$] Let $g\in C^1([0,1])$ be a positive function, satisfying $\norm{g}_{L^{\infty}} \leq 1$ with $g(x) = g(1-x)$ for all $x\in[0,1]$ and $g(0),g(1)<\ell$. For $0<\eps\leq 1$, let
\begin{align}\label{eq:NeckRe}
    R_{\eps}=\left\{(x,y):0<x<1:0<y<\eps g(x)\right\}
\end{align}
serve as a connecting neck, disjoint from the separated domain $\Omega_0$. We then define the dumbbell domain $\Omega_{\eps}$ as the perturbation of $\Omega_0$ given by
$$\Omega_\epsilon=\Omega_0\cup R_\epsilon\cup\left\{(0,y): 0<y<\epsilon g(0) \right\}\cup\left\{(1,y): 0<y<\eps g(1)\right\}.$$
\end{define}

\quad Under these definitions, $\Omega_\eps$ degenerates to the union of the separated domain $\Omega_0$ with the line segment $[0,1]\times\{0\}$ as $\epsilon\to 0$. See Figure \ref{fig:Dumbbell} for an example of $\Omega_\eps$ when $\Omega_L$ and $\Omega_R$ are square domains.
    
\begin{figure}[H]
    \centering
    \includegraphics[scale=0.7]{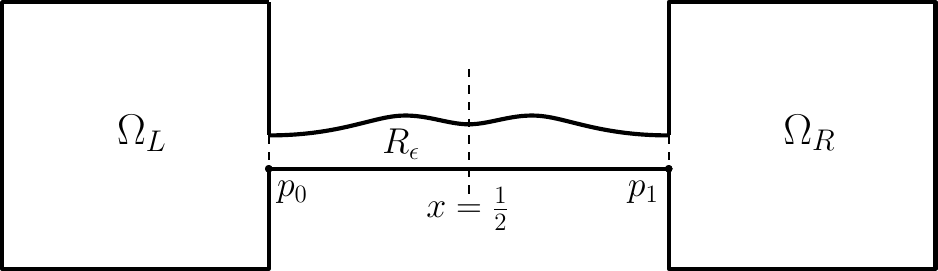}
    \caption{An example of the dumbbell domain $\Omega_\eps$.}
    \label{fig:Dumbbell}
\end{figure}

\quad It is known that eigenfunctions of the Dirichlet Laplacian, when defined over $\Omega_\eps$, become localized to the separated domain $\Omega_0$ as $\eps$ vanishes \cite{BCM-pleijel, D03, MS23}. The Dirichlet spectrum of $\Omega_\eps$ then converges to the Dirichlet spectrum of $\Omega_0$, and the degenerate neck plays no role. However, for small $\eps$, eigenfunctions of the Neumann Laplacian in $\Omega_\eps$ closely resemble the restrictions of eigenfunctions in $\Omega_0$ connected in $R_\eps$ by the solution to a Sturm-Liouville equation, as made precise in Theorem \ref{thm:EigFnVar} below. For example, the second Neumann eigenfunction of $\Omega_{\eps}$ resembles constants of different signs in $\Omega_0$ joined by a monotonic function of one variable in the neck. As shown in \cite{MS23} for general dumbbells, the nodal set of the second Neumann eigenfunction then cannot extend far from the neck. In general, let $\{\lambda_j^\epsilon\}_{j=1}^\infty$ denote the Neumann eigenvalues of $\Omega_{\eps}$ with corresponding $L^2(\Omega_\eps)$-normalized eigenfunctions $\{\varphi_j^\epsilon\}_{j=1}^\infty$, and set 
\begin{equation}\label{eq:EigenvalueList}
    \{\lambda_j\}_{j=1}^{\infty} = \{\mu_k\}_{k=1}^{\infty}\cup\{\tau_n\}_{n=1}^{\infty}
\end{equation}
as an ordered sequence, counted with multiplicity, where $\{\mu_k\}_{k=1}^{\infty}$ contains the Neumann eigenvalues of $\Omega_0$ and $\{\tau_n\}_{n=1}^{\infty}$ contains the Dirichlet eigenvalues of the Sturm-Liouville equation
\begin{align}\label{eq:OntheInt2}
    -(g(x)v'(x))' = \tau g(x)v(x) \text{ in } (0,1).
\end{align}
Then by \cite[Theorem $2.2$]{A95}, the limit $\lim_{\eps\to0}\lambda_j^\epsilon$ exists for each $j\in\mathbb{N}$ and is equal to $\lambda_j$. Thus, in contrast to the Dirichlet case, the geometry of $R_\eps$ impacts the Neumann spectrum of $\Omega_\eps$, which converges to the Neumann eigenvalues of $\Omega_0$ alongside the Dirichlet eigenvalues from (\ref{eq:OntheInt2}). 

\quad Note that by the symmetry of the separated domain $\Omega_0$, every Neumann eigenvalue of multiplicity $m$ for $\Omega_L$ appears in the sequence $\{\mu_k\}_{k=1}^{\infty}$ exactly $2m$ times. We consider the case where there are exactly two eigenvalue branches converging to an element $\mu\in\{\mu_k\}_{k=1}^{\infty}$ by making the following non-degeneracy assumption.
\begin{assumption}[Simple Eigenvalue Assumption]\label{ass:eig}
    We assume that $\mu\in \{\lambda_j\}_{j=1}^{\infty}$, where $\mu$ is a simple Neumann eigenvalue of $\Omega_L$, with $\mu\notin \{\tau_n\}_{n=1}^\infty$. 
\end{assumption}

For an eigenvalue $\mu$ satisfying this assumption, let $\phi$ be a corresponding Neumann eigenfunction in $\Omega_L$. Let $\varphi^{\eps,e}$ and $\varphi^{\eps,o}$ denote the pair of eigenfunctions in $\Omega_{\eps}$ with eigenvalues $\lambda^{\eps,e}$ and $\lambda^{\eps,o}$ converging to $\mu$. This notation is justified below in Lemma \ref{lem:SymmetryEigF}. Our first result provides a lower bound on the nodal deficiency of either eigenfunction $\varphi^{\eps,e}$ or $\varphi^{\eps,o}$ in $\Omega_{\eps}$ in terms of the nodal deficiency of $\phi$ in $\Omega_L$. This requires the following non-degeneracy assumption on the behavior of $\phi$ at $p_0\in\pa\Omega_L$, where we recall $p_0
$ is the point on $\pa\Omega_L$ where the neck $R_\eps$ is joined.

\begin{assumption}[Non-Vanishing Assumption]\label{ass:phi}
    We assume that $\phi(p_0)\neq0$, so that the nodal set of $\phi$ is disjoint from a neighborhood of $p_0\in\pa\Omega_L$. 
\end{assumption}

\quad Our first theorem considers the nodal deficiency (\ref{eq:NodDef}) and is as follows:

\begin{theorem} \label{thm:Main}
Under Assumptions \ref{ass:eig} and \ref{ass:phi}, there exists a constant $\eps_0>0$, depending only on the geometry of $\Omega_0$, the function $g$ defining $R_\eps$, and the eigenpair $(\mu,\phi)$, such that for $0<\eps<\eps_0$,
\begin{align*}
    \upvartheta(\varphi^{\eps,e},\Omega_\eps) \geq 2\upvartheta(\phi,\Omega_L) \quad \textrm{and} \quad
     \upvartheta(\varphi^{\eps,o},\Omega_\eps) \geq 2\upvartheta(\phi,\Omega_L),
\end{align*}
 with equality if the nodal set of $\phi$ in ${\Omega_L}$ contains no crossings.
\end{theorem}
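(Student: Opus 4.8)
The plan is to evaluate the two sides of the claimed inequality separately and to watch the neck contributions cancel. Abbreviate $j_0 \eqdef \textrm{index}(\mu,\Omega_L)$ and let $N \eqdef \#\{n:\tau_n<\mu\}$ be the number of Sturm--Liouville eigenvalues from \eqref{eq:OntheInt2} lying below $\mu$. By Assumption~\ref{ass:eig} the eigenvalue $\mu$ is simple for $\Omega_L$ and $\mu\notin\{\tau_n\}$, so by the symmetry of $\Omega_0$ it occurs exactly twice in the ordered list $\{\lambda_j\}$, in positions $J$ and $J+1$ with
$$J \;=\; 2(j_0-1)+N+1 \;=\; 2j_0+N-1,$$
since $2(j_0-1)$ Neumann eigenvalues of $\Omega_0=\Omega_L\cup\Omega_R$ and $N$ eigenvalues $\tau_n$ lie strictly below $\mu$. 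Because $\lambda_j^\eps\to\lambda_j$ for every $j$ and $\mu$ is isolated in $\{\lambda_j\}$, one fixes $\eps_0>0$ (depending on the spectral gaps of $\Omega_0$ around $\mu$ and on $g$) so that for $0<\eps<\eps_0$ the two branches $\lambda^{\eps,e},\lambda^{\eps,o}$ are precisely $\lambda_J^\eps$ and $\lambda_{J+1}^\eps$; hence $\{\textrm{index}(\lambda^{\eps,e},\Omega_\eps),\textrm{index}(\lambda^{\eps,o},\Omega_\eps)\}=\{J,J+1\}$.

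Next I would invoke the eigenfunction asymptotics of Theorem~\ref{thm:EigFnVar} together with the symmetry decomposition of Lemma~\ref{lem:SymmetryEigF}. After a convenient normalization, on $\Omega_L$ both $\varphi^{\eps,e}$ and $\varphi^{\eps,o}$ converge to $\phi$, on $\Omega_R$ they converge to $+\widetilde{\phi}$ respectively $-\widetilde{\phi}$ (where $\widetilde{\phi}$ is the reflection of $\phi$ across $\{x=\tfrac12\}$), and on $R_\eps$ they converge, as functions of $x$ alone, to the solutions $v_e,v_o$ of $-(gv')'=\mu g v$ on $(0,1)$ with $v_e(0)=v_e(1)=\phi(p_0)$ and $v_o(0)=-v_o(1)=\phi(p_0)$, with $v_e$ even and $v_o$ odd about $x=\tfrac12$. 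Let $Z_e,Z_o$ denote the numbers of zeros of $v_e,v_o$ in $(0,1)$; since $\mu\notin\{\tau_n\}$ these zeros are all simple. A Pr\"ufer-angle (Sturm oscillation) computation on the half-interval $(0,\tfrac12)$, comparing $v_e$ with the Dirichlet--Neumann eigenfunctions of \eqref{eq:OntheInt2} on $(0,\tfrac12)$ and $v_o$ with the Dirichlet--Dirichlet ones and using the interlacing of those two eigenvalue sequences, gives $\{Z_e,Z_o\}=\{N,N+1\}$, with $Z_e$ even and $Z_o$ odd. Combining this with the branch ordering established earlier in the paper — the multiplicity-$2$ analog of Arrieta's first-order variation, following \cite{A95,J93}, which shows that the branch whose limiting neck profile has fewer zeros carries the smaller eigenvalue — yields in every case the clean identity
$$\textrm{index}(\lambda^{\eps,\bullet},\Omega_\eps) \;=\; 2j_0 + Z_\bullet - 1, \qquad \bullet\in\{e,o\}.$$

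It remains to bound the nodal count from above. Here Assumption~\ref{ass:phi} enters: since $\phi(p_0)\neq0$, for $\eps$ small the junction segment $\{0\}\times(0,\eps g(0))$ lies inside a single nodal domain of $\varphi^{\eps,\bullet}$ within $\Omega_L$, and likewise at $p_1$. Consequently, for $\eps<\eps_0$ the nodal set of $\varphi^{\eps,\bullet}$ decomposes into curves in $\Omega_L$ staying away from $p_0$ and close to the nodal set of $\phi$, their mirror images in $\Omega_R$, and $Z_\bullet$ nearly vertical arcs in $R_\eps$ near the simple zeros of $v_\bullet$. Bookkeeping — the first and last of the $Z_\bullet+1$ neck strips are absorbed into the nodal domains touching $p_0$ and $p_1$ (and when $Z_\bullet=0$ those two nodal domains are merged into one through the neck) — gives, writing $\llbracket\,\cdot\,,\Omega_L\rrbracket$ for the number of nodal domains of the restriction to $\Omega_L$,
$$\llbracket\varphi^{\eps,\bullet},\Omega_\eps\rrbracket \;=\; \llbracket\varphi^{\eps,\bullet},\Omega_L\rrbracket + \llbracket\varphi^{\eps,\bullet},\Omega_R\rrbracket + Z_\bullet - 1 \;\le\; 2\llbracket\phi,\Omega_L\rrbracket + Z_\bullet - 1,$$
with equality if the nodal set of $\phi$ in $\Omega_L$ contains no crossings, since then the nodal count of $\phi$ is stable under the perturbation (a crossing, if present, can only decrease the count). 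Subtracting this from the index identity of the previous step, the terms $Z_\bullet$ cancel:
$$\upvartheta(\varphi^{\eps,\bullet},\Omega_\eps) \;=\; \textrm{index}(\lambda^{\eps,\bullet},\Omega_\eps) - \llbracket\varphi^{\eps,\bullet},\Omega_\eps\rrbracket \;\ge\; (2j_0+Z_\bullet-1) - (2\llbracket\phi,\Omega_L\rrbracket+Z_\bullet-1) \;=\; 2\,\upvartheta(\phi,\Omega_L),$$
with equality when the nodal set of $\phi$ has no crossings.

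The main obstacle is the branch ordering used in the second step. Knowing only that the two branches occupy positions $J$ and $J+1$, the argument above produces $\upvartheta(\varphi^{\eps,\bullet},\Omega_\eps)\ge 2\,\upvartheta(\phi,\Omega_L)-1$ for the branch whose neck profile has $N+1$ zeros, and no parity consideration recovers the missing unit; one genuinely needs the first-order eigenvalue expansion for the multiplicity-$2$ limit to see that the extra neck oscillation always costs the larger index (equivalently, that the two branches do not cross for small $\eps$). A secondary technical point is to make the nodal-count comparison on $\Omega_L$ fully rigorous near the vertices of $\partial\Omega_L$ and to justify that crossings of the nodal set of $\phi$ can only reduce the count in the degenerating limit; both are handled by the eigenfunction estimates underlying Theorem~\ref{thm:EigFnVar}.
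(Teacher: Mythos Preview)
Your proposal is correct and follows essentially the same route as the paper: the index is pinned down via spectral convergence plus the first-order variation (Theorem~\ref{thm:EigVar}) and the Wronskian/oscillation link between $\Theta_\mu$ and the zero counts $Z_e,Z_o$ (the paper's Lemmas~\ref{lem:NumEvOd}--\ref{lem:ZeroCount}), while the nodal count is controlled by the eigenfunction estimates of Theorem~\ref{thm:EigFnVar} together with a Faber--Krahn argument ruling out small nodal domains near $\pa\Omega_L$, the vertices, and the neck ends (the paper's Lemmas~\ref{lem:Omega4}--\ref{lem:NbhdofP0} and Proposition~\ref{prop:Omega-L}). Your unified identity $\textrm{index}(\lambda^{\eps,\bullet},\Omega_\eps)=2j_0+Z_\bullet-1$ is exactly the content of Corollary~\ref{cor:index-main}, and you correctly flag the branch-ordering step (that the profile with more zeros takes the larger index) as the place where the multiplicity-two eigenvalue expansion is genuinely needed.
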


\quad This result provides a lower bound for the nodal deficiency of the perturbed eigenfunctions $\varphi^{\eps,e}$ and $\varphi^{\eps,o}$ in $\Omega_\eps$ in terms of the nodal deficiency of $\phi$ in $\Omega_L$ and is proved in Sections \ref{sec:NDoriginal} and \ref{sec:NDperturbed}.  If the nodal set of $\phi$ has no crossings and hence does not self-intersect, then we find that, for small enough perturbations of the neck, the nodal sets of $\varphi^{\eps,e}$ and $\varphi^{\eps,o}$ likewise contain no crossings and $\llbracket \varphi^{\eps,e},\Omega_L\rrbracket=\llbracket \varphi^{\eps,o},\Omega_L\rrbracket=\llbracket \phi,\Omega_L\rrbracket$. As an immediate consequence of Theorem \ref{thm:Main}, we obtain a criterion for when a Courant sharp eigenfunction in $\Omega_L$ leads to a pair of Courant sharp eigenfunctions in the perturbed dumbbell domain $\Omega_\eps$.

\begin{cor} \label{cor:Main}
If $\phi$ is Courant sharp in $\Omega_L$ and its nodal set contains no crossings, then for all $0<\eps<\eps_0$, the eigenfunctions $\varphi^{\eps,e}$, $\varphi^{\eps,o}$ are a pair of Courant sharp eigenfunctions in $\Omega_{\eps}$.

Conversely, if $\phi$ is not Courant sharp in $\Omega_L$, then for all $0<\eps<\eps_0$, the eigenfunctions $\varphi^{\eps,e}$, $\varphi^{\eps,o}$ are not Courant sharp eigenfunctions in $\Omega_{\eps}$.
\end{cor}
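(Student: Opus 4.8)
The plan is to deduce both statements directly from Theorem \ref{thm:Main}, using the elementary observation that an eigenfunction is Courant sharp precisely when its nodal deficiency vanishes, and that by the Courant nodal domain theorem (see (\ref{eq:NodDef})) the nodal deficiency is always a nonnegative integer. Throughout, the standing Assumptions \ref{ass:eig} and \ref{ass:phi} are in force, so that Theorem \ref{thm:Main} applies to the eigenpair $(\mu,\phi)$ and the associated branches $\varphi^{\eps,e},\varphi^{\eps,o}$.

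For the first claim, I would start from the hypothesis that $\phi$ is Courant sharp in $\Omega_L$, i.e. $\upvartheta(\phi,\Omega_L)=0$, together with the assumption that the nodal set of $\phi$ in $\Omega_L$ contains no crossings. The no-crossing hypothesis is exactly the condition under which Theorem \ref{thm:Main} yields equality rather than merely the lower bound, so for every $0<\eps<\eps_0$ we obtain $\upvartheta(\varphi^{\eps,e},\Omega_\eps)=2\upvartheta(\phi,\Omega_L)=0$ and likewise $\upvartheta(\varphi^{\eps,o},\Omega_\eps)=0$. By the definition of Courant sharpness, both $\varphi^{\eps,e}$ and $\varphi^{\eps,o}$ are then Courant sharp in $\Omega_\eps$.

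For the converse, I would invoke only the lower bound of Theorem \ref{thm:Main}, which requires no hypothesis on crossings. If $\phi$ fails to be Courant sharp in $\Omega_L$, then $\upvartheta(\phi,\Omega_L)\neq 0$, and since this quantity is a nonnegative integer we conclude $\upvartheta(\phi,\Omega_L)\geq 1$. The lower bound then gives $\upvartheta(\varphi^{\eps,e},\Omega_\eps)\geq 2\upvartheta(\phi,\Omega_L)\geq 2>0$ for all $0<\eps<\eps_0$, and the identical estimate holds for $\varphi^{\eps,o}$; hence neither eigenfunction is Courant sharp in $\Omega_\eps$. Since the corollary is an immediate consequence of Theorem \ref{thm:Main}, there is no genuine obstacle here: the only subtlety worth stressing is the integrality of the nodal deficiency, which is what promotes "nonzero" to "at least two" once the factor of $2$ from the doubling is accounted for.
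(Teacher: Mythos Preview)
Your proof is correct and matches the paper's approach exactly: the paper states Corollary \ref{cor:Main} as ``an immediate consequence of Theorem \ref{thm:Main}'' without further argument, and you have simply written out the two-line deduction in each direction. The observation about integrality promoting ``nonzero'' to ``at least two'' is the only point worth making explicit, and you have done so.
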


\quad When $\mu=0$, the eigenfunction $\phi$ is constant in $\Omega_L$ and thus contains no crossings in its nodal set. In this case,  Corollary \ref{cor:Main} recovers the known fact that the first two eigenfunctions in $\Omega_\eps$ are necessarily Courant sharp.

\begin{rem} \label{rem:assum}
    If either Assumption \ref{ass:eig} or Assumption \ref{ass:phi} fails for a given separated domain $\Omega_0$ and neck function $g$, then under a generic perturbation of $\Omega_0$ and $g$, both assumptions hold.
\end{rem}

\subsubsection{Eigenfunction and eigenvalue estimates.} Key ingredients in the proof of Theorem \ref{thm:Main} include an approximation of the eigenfunctions $\varphi^{\eps,e}$ and $\varphi^{\eps,o}$ for small $\eps>0$ and a method for determining the position of the corresponding eigenvalues $\lambda^{\eps,e}$ and $\lambda^{\eps,o}$ in the Neumann spectrum of $\Omega_\eps$. 

\quad Given the eigenfunction $\phi$ solving (\ref{eq:PDE}) in $\Omega_L$ with eigenvalue $\mu$, suppose that $\phi^e$ and $\phi^o$ are even and odd reflections of $\phi$ across the vertical line $\left\{x=\tfrac{1}{2}\right\}$ to the separated domain $\Omega_0$. Noting that nodal sets are invariant under eigenfunction scaling, we normalize the eigenfunctions so that $\norm{\varphi^{\eps,e}}_{L^2(\Omega_{\eps})}=\norm{\varphi^{\eps,o}}_{L^2(\Omega_{\eps})}= 1 $, together with $\norm{\phi^e}_{L^2(\Omega_0)} =\norm{\phi^o}_{L^2(\Omega_0)} = 1$.  Using the same notation as above, we have that $(\lambda^{\eps,e},\varphi^{\eps,e})$ and $(\lambda^{\eps,o},\varphi^{\eps,o})$ are the pair of eigenpairs solving (\ref{eq:PDE}) in $\Omega_\eps$ and with $$\lim_{\eps\to0}\lambda^{\eps,e}=\lim_{\eps\to0}\lambda^{\eps,o}=\mu.$$ 

While $\mu$ is a simple Neumann eigenvalue of $\Omega_L$ by Assumption \ref{ass:eig}, the Sturm-Liouville eigenfunctions of \eqref{eq:OntheInt2} on the connecting interval $(0,1)$ still play a role in the spectral flow. We denote
\begin{equation}\label{eq:ThetaEq}
    \Theta_\lambda(a,b)=\int_0^1g(x)\left|\frac{d}{dx}\xi_\lambda^{a,b}(x)\right|^2dx-\lambda\int_0^1g(x)\left|\xi_\lambda^{a,b}(x)\right|^2dx
\end{equation}
where $\xi_\lambda^{a,b}$ solves (\ref{eq:OntheInt2}) with eigenvalue $\lambda$ and boundary conditions $\xi_\lambda^{a,b}(0)=a$ and $\xi_\lambda^{a,b}(1)=b$. The quantity in (\ref{eq:ThetaEq}) serves as a measure of how the energetic characterization of $\lambda$ differs from the symmetric Dirichlet form and offers the first-order variation for the eigenvalue branches given by $\lambda^{\eps,e}$ and $\lambda^{\eps,o}$, as described in the following statement.

\begin{comment}
{\color{blue}\begin{theorem}\label{thm:EigVar}
    Under Assumption \ref{ass:eig}, given $c>0$, there exists $\eps_0>0$, depending only on the geometry of $\Omega_0$, the function $g$ in (\ref{eq:NeckRe}), and the eigenpair $(\mu,\phi)$, such that, for $0<\eps<\eps_0$, the two eigenvalue branches, $\lambda^{\eps,e}$, $\lambda^{\eps,o}$, on $\Omega_\epsilon$ that limit to $\mu$ as $\epsilon\to 0$ satisfy
    \begin{align}
        \left|\lambda^{\epsilon,e}-\mu-\epsilon\Theta_{\mu}\left(\phi^e(p_0),\phi^e(p_1)\right)\right|\leq c\eps, \nonumber \\ \left|\lambda^{\epsilon,o}-\mu-\epsilon\Theta_{\mu}\left(\phi^o(p_0),\phi^o(p_1)\right)\right|\leq c\epsilon.\nonumber
    \end{align}
\end{theorem}
} 
\end{comment}

\begin{theorem}\label{thm:EigVar}
    Under Assumption \ref{ass:eig}, the two eigenvalue branches on $\Omega_\epsilon$ that limit to $\mu$ as $\epsilon\to 0$ can be written as
    \begin{align}
        \lambda^{\epsilon,e}=\mu+\epsilon\Theta_{\mu}\left(\phi^e(p_0),\phi^e(p_1)\right)+o(\epsilon), \nonumber \\ \lambda^{\epsilon,o}=\mu+\epsilon\Theta_{\mu}\left(\phi^o(p_0),\phi^o(p_1)\right)+o(\epsilon).\nonumber
    \end{align}
\end{theorem}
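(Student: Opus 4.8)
The plan is to use the reflection symmetry to split off a \emph{simple} limiting eigenvalue in each symmetry class, and then to compute its first-order variation with a one-term quasimode. The reflection $\sigma\colon(x,y)\mapsto(1-x,y)$ preserves $\Omega_\eps$ for every $\eps$, so the Neumann Laplacian on $\Omega_\eps$ leaves the even and odd subspaces invariant; writing $\Omega_\eps^{+}=\Omega_\eps\cap\{x<\tfrac12\}$, the branch $\lambda^{\eps,e}$ is then a Neumann eigenvalue of $\Omega_\eps^{+}$, while $\lambda^{\eps,o}$ is an eigenvalue of $\Omega_\eps^{+}$ with the Neumann condition on $\pa\Omega_\eps\cap\{x<\tfrac12\}$ and a Dirichlet condition on $\{x=\tfrac12\}\cap\overline{\Omega_\eps}$. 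As $\eps\to0$ the set $\Omega_\eps^{+}$ degenerates to $\Omega_L$ together with the half-segment $[0,\tfrac12]\times\{0\}$, and since $\Omega_L$ is disjoint from $\{x=\tfrac12\}$, the limiting operator in either class decouples into the Neumann Laplacian on $\Omega_L$ and problem \eqref{eq:OntheInt2} on $(0,\tfrac12)$ with a Dirichlet condition at $x=0$ and a Neumann (even case) or Dirichlet (odd case) condition at $x=\tfrac12$; the spectrum of this one-dimensional piece is contained in $\{\tau_n\}_{n=1}^{\infty}$. Hence, by Assumption \ref{ass:eig}, in each symmetry class the limiting eigenvalue $\mu$ is simple, so it is isolated by a fixed gap $\delta>0$; then by \cite[Theorem $2.2$]{A95} together with the reflection symmetry of $\Omega_\eps$, for all sufficiently small $\eps$ the branch $\lambda^{\eps,e}$ (resp. $\lambda^{\eps,o}$) is the unique eigenvalue of $\Omega_\eps^{+}$ in $(\mu-\delta,\mu+\delta)$ in that class, it is simple, and its normalized eigenfunction converges to $\phi^{e}$ (resp. $\phi^{o}$). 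This places us in the simple-eigenvalue setting of \cite{A95,J93}, apart from the mixed boundary condition in the odd case.

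Take the even branch; the odd case is identical. Let $\zeta=\xi_\mu^{\phi^e(p_0),\phi^e(p_1)}$ be the profile in \eqref{eq:ThetaEq}; since $\phi^e$ is even we have $\phi^e(p_1)=\phi^e(p_0)$, so $\zeta$ is symmetric about $x=\tfrac12$ and lies in the even class. I would use the quasimode $w_\eps\in H^1(\Omega_\eps)$ equal to $\zeta(x)$ on the neck $R_\eps$ and equal to $\phi^e+\psi_\eps$ on $\Omega_0$, with $\psi_\eps$ a correction supported in $O(\eps)$-balls about $p_0$ and $p_1$ that forces the traces of $w_\eps$ to agree along the interfaces $\{0\}\times(0,\eps g(0))$ and $\{1\}\times(0,\eps g(1))$; the flatness of $\pa\Omega_0$ near $p_0,p_1$ and the $C^1$ regularity of $\phi^e$ there allow the choice $\norm{\psi_\eps}_{H^1(\Omega_0)}=O(\eps)$. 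Since $w_\eps$ depends only on $x$ on $R_\eps$, the neck integrals are exact, $\int_{R_\eps}|\nabla w_\eps|^2=\eps\int_0^1 g|\zeta'|^2$ and $\int_{R_\eps}|w_\eps|^2=\eps\int_0^1 g|\zeta|^2$ up to $O(\eps^2)$ from the cutoffs, while $\int_{\Omega_0}|\nabla w_\eps|^2=\mu+O(\eps^2)$ and $\int_{\Omega_0}|w_\eps|^2=1+O(\eps^2)$ because $\norm{\phi^e}_{L^2(\Omega_0)}=1$ and $\psi_\eps$ lives on a set of area $O(\eps^2)$. Writing $\mathcal{R}(w)$ for the Rayleigh quotient $\int_{\Omega_\eps}|\nabla w|^2/\int_{\Omega_\eps}|w|^2$, this gives $\mathcal{R}(w_\eps)=\mu+\eps\big(\int_0^1 g|\zeta'|^2-\mu\int_0^1 g|\zeta|^2\big)+o(\eps)=\mu+\eps\,\Theta_\mu(\phi^e(p_0),\phi^e(p_1))+o(\eps)$.

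To pass from $\mathcal{R}(w_\eps)$ to $\lambda^{\eps,e}$, write $\varphi^{\eps,e}=c_\eps w_\eps+z_\eps$ with $z_\eps$ orthogonal to $w_\eps$ in $L^2(\Omega_\eps)$; using the weak eigenvalue equation $\int_{\Omega_\eps}\nabla\varphi^{\eps,e}\cdot\nabla v=\lambda^{\eps,e}\int_{\Omega_\eps}\varphi^{\eps,e}v$ (for all $v\in H^1(\Omega_\eps)$) to evaluate $\int_{\Omega_\eps}\nabla w_\eps\cdot\nabla z_\eps$ yields the exact identity $\lambda^{\eps,e}(1-2\norm{z_\eps}_{L^2}^2)=(1-\norm{z_\eps}_{L^2}^2)\,\mathcal{R}(w_\eps)-\int_{\Omega_\eps}|\nabla z_\eps|^2$, whence $\lambda^{\eps,e}=\mathcal{R}(w_\eps)+O(\norm{z_\eps}_{H^1(\Omega_\eps)}^2)$. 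The theorem therefore follows once one shows $\norm{z_\eps}_{H^1(\Omega_\eps)}=o(\sqrt{\eps})$, i.e.\ that $\varphi^{\eps,e}$ agrees with $w_\eps$ to leading order inside the neck, where both carry $H^1$-mass of order $\sqrt{\eps}$. This is exactly the eigenfunction approximation of Theorem \ref{thm:EigFnVar}; it can also be obtained directly from the quasimode bound above, the spectral gap $\delta$, and the junction estimates mentioned below. The odd branch is treated identically with $\zeta$ replaced by $\xi_\mu^{\phi^o(p_0),\phi^o(p_1)}$, which is antisymmetric about $x=\tfrac12$, and matching the two branches to $\phi^e,\phi^o$ is the labeling of Lemma \ref{lem:SymmetryEigF}.

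The main obstacle is the quantitative control near the junctions. At $p_0$ and $p_1$ the domain $\Omega_\eps$ has a reentrant corner of opening close to $\tfrac{3\pi}{2}$ where the thin neck meets the flat wall $\pa\Omega_0$, so $\varphi^{\eps,e}$ is only weakly regular there and $\nabla\varphi^{\eps,e}$ need not remain bounded as $\eps\to0$; controlling the trace of $\nabla\varphi^{\eps,e}$ on the $O(\eps)$-length interfaces---which governs both the residual of the quasimode and the $L^2(\Omega_0)$ pairings of $\phi^e$ against the lower Neumann eigenfunctions of $\Omega_\eps$---requires the local flatness hypothesis of Definition \ref{defn:dumbbell} and a boundary-layer analysis along the lines of \cite{A95,J93}. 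Once these $o(\sqrt{\eps})$ bounds are in place, the identities above yield the stated first-order expansions.
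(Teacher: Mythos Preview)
Your symmetry reduction to a simple eigenvalue on the half-domain $\Omega_\eps^{+}$ is a genuinely different route from the paper's, and a clean one: it bypasses the multiplicity-two bookkeeping that the paper handles in Lemma~\ref{lem:SymmetryEigF} and in the second half of the proof of Lemma~\ref{lem:LwrBd}.  Your Rayleigh-quotient computation for the quasimode $w_\eps$ is correct and is essentially the content of the paper's upper bound (Lemma~\ref{lem:LwrBd}), and the identity $\lambda^{\eps,e}(1-2\|z_\eps\|^2)=(1-\|z_\eps\|^2)\mathcal{R}(w_\eps)-\|\nabla z_\eps\|^2$ is a nice way to collapse the separate upper and lower bound arguments of Sections~\ref{sec:Upper}--\ref{sec:Lower} into a single step.

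There is, however, a real gap.  The whole argument hinges on $\|z_\eps\|_{H^1(\Omega_\eps)}=o(\sqrt{\eps})$, and you do not prove this: you defer it either to Theorem~\ref{thm:EigFnVar} or to unspecified ``junction estimates.''  The first option is circular in this paper's logical order: Theorem~\ref{thm:EigFnVar} is deduced from~\eqref{eq:smallR}, which in turn comes from combining the upper bound of Lemma~\ref{lem:LwrBd} with the lower bound~\eqref{eq:lowerbd}, i.e.\ from Theorem~\ref{thm:EigVar} itself.  The second option is the right one, but it \emph{is} the hard part of the proof---it is precisely the content of \cite[Proposition~5.4]{A95} and the analysis leading to~\eqref{eq:GettingThere3}---and you have only named it, not carried it out.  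A quasimode bound on $\mathcal{R}(w_\eps)$ plus a fixed spectral gap does not by itself yield $\|z_\eps\|_{H^1}=o(\sqrt{\eps})$; one needs the kind of decomposition in Propositions~\ref{prop:BlowItUp}--\ref{prop:specgap} to show that the eigenfunction cannot concentrate anomalously near the junctions or carry $H^1$-mass of order $\sqrt{\eps}$ in $R_\eps$ that is not captured by $\xi_\mu$.

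A second, smaller issue: after the reduction, the half-domain $\Omega_\eps^{+}$ is a single bulk domain $\Omega_L$ attached to a half-neck terminating at a flat wall at $x=\tfrac12$ with a Neumann (even) or Dirichlet (odd) condition.  This is not the two-bulk dumbbell of \cite{A95}, so you cannot simply invoke \cite[Theorem~2.5(a)]{A95}; you would have to rerun its Section~5 analysis in this one-sided setting (and, for the odd branch, with a mixed boundary condition).  That should go through, but it is additional work rather than a citation.
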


\quad Here and throughout the paper, the implicit constant in $o(\eps)$ depends only on the geometry of $\Omega_0$, the function $g$ in (\ref{eq:NeckRe}), and the eigenpair $(\mu,\phi)$. Theorem \ref{thm:EigVar} provides the eigenvalue asymptotics needed to determine how the eigenvalue indices index$(\lambda^{\eps,e},\Omega_\eps)$ and index$(\lambda^{\eps,o},\Omega_\eps)$ change upon small perturbation of the domain.

\begin{rem} \label{rem:order}
    Given $\{\tau_n\}_{n=1}^\infty$ as in (\ref{eq:EigenvalueList}), let $k$ be the largest integer such that $\tau_k<\mu$, with $k=0$ if $0\leq \mu<\tau_1$. Then, under Assumption \ref{ass:eig} and by \cite[Theorem $2.2$]{A95}, there exists $\eps_0>0$ so that for $0<\eps<\eps_0$, $\lambda^{\eps,e}$ and $\lambda^{\eps,o}$ appear as the $2\:\emph{index}(\mu,\Omega_L) + k-1$ and $2\:\emph{index}(\mu,\Omega_L)+k$ Neumann eigenvalues of $\Omega_{\eps}$ in some order. Theorem \ref{thm:EigVar} says that this order is determined by the relative sizes of $\Theta_\mu\left(\phi^e(p_0),\phi^e(p_1)\right)$ and $\Theta_\mu\left(\phi^o(p_0),\phi^o(p_1)\right)$.
\end{rem}

\begin{rem}
    We will see that whenever $\mu$ is not a Neumann eigenvalue of (\ref{eq:OntheInt2}), both $\Theta_\mu\left(\phi^e(p_0),\phi^e(p_1)\right)$ and $\Theta_\mu\left(\phi^o(p_0),\phi^o(p_1)\right)$ are nonzero.  Moreover, under Assumptions \ref{ass:eig} and \ref{ass:phi}, it always holds that  $$\Theta_\mu\left(\phi^e(p_0),\phi^e(p_1)\right)\neq \Theta_\mu\left(\phi^o(p_0),\phi^o(p_1)\right),$$ and so in particular if either variation is zero, then the other is necessarily nonzero. For more details, see Lemma \ref{lem:ZeroCount} and Remark \ref{rem:BananaSplit}.
\end{rem}

\quad Theorem \ref{thm:EigVar} is an extension of Theorem 2.5(a) in \cite{A95}, which proved the same eigenvalue estimate in the case where $\mu$ is a simple Neumann eigenvalue of $\Omega_0$, disjoint from $\{\tau_n\}_{n=1}^{\infty}$, and hence there is only one eigenvalue branch of $\Omega_\eps$ converging to $\mu$. In this same setting, Theorem 2.7(a) of \cite{A95} provides estimates between the corresponding eigenfunctions of $\Omega_{\eps}$ and $\Omega_0$, also making use of an appropriate function $\xi^{a,b}_\mu$ in the neck. The analysis in Sections \ref{sec:Upper} and \ref{sec:Lower} extends these results to our setting. In the theorem below, we let $\xi^{\phi^e}_{\mu}$ be the function solving \eqref{eq:OntheInt2} with eigenvalue $\mu$ and boundary conditions $\xi^{\phi^e}_{\mu}(0) = \phi^e(p_0)$ and $\xi^{\phi^e}_{\mu}(1) = \phi^e(p_1)$. The function $\xi^{\phi^o}_{\mu}$ is defined analogously. 

\begin{comment}
{\color{blue}\begin{theorem}\label{thm:EigFnVar}
    Under Assumption \ref{ass:eig}, given $c>0$, there exists $\eps_0>0$, depending only on the geometry of $\Omega_0$, the function $g$ in (\ref{eq:NeckRe}), and the eigenpair $(\mu,\phi)$, such that for $0<\eps<\eps_0$, the orthonormal eigenfunctions, $\varphi^{\eps,e}$, $\varphi^{\eps,o}$, on $\Omega_\epsilon$ with eigenvalues $\lambda^{\epsilon,e},\lambda^{\epsilon,o}$ satisfy
    \begin{align}
        \norm{\varphi^{\epsilon,e}-\phi^e}_{H^1(\Omega_0)}^2\leq c\epsilon, \quad \quad \norm{\varphi^{\epsilon,e}-\xi_{\mu}^{\phi^e}}_{H^1(R_\epsilon)}^2\leq c\epsilon, \nonumber \\
        \norm{\varphi^{\epsilon,o}-\phi^o}_{H^1(\Omega_0)}^2\leq c\epsilon, \quad \quad \norm{\varphi^{\epsilon,o}-\xi_{\mu}^{\phi^o}}_{H^1(R_\epsilon)}^2\leq c\epsilon. \nonumber
    \end{align}
\end{theorem}}
\end{comment}

\begin{theorem}\label{thm:EigFnVar}
    Under Assumption \ref{ass:eig}, the orthonormal eigenfunctions on $\Omega_\epsilon$ with eigenvalues $\lambda^{\epsilon,e},\lambda^{\epsilon,o}$ satisfy
    \begin{align}
        \norm{\varphi^{\epsilon,e}-\phi^e}_{H^1(\Omega_0)}^2=o(\epsilon), \quad \quad \norm{\varphi^{\epsilon,e}-\xi_{\mu}^{\phi^e}}_{H^1(R_\epsilon)}^2=o(\epsilon), \nonumber \\
        \norm{\varphi^{\epsilon,o}-\phi^o}_{H^1(\Omega_0)}^2=o(\epsilon), \quad \quad \norm{\varphi^{\epsilon,o}-\xi_{\mu}^{\phi^o}}_{H^1(R_\epsilon)}^2=o(\epsilon). \nonumber
    \end{align}
\end{theorem}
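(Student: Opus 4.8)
The plan is to run a variational/Rayleigh-quotient argument adapting the strategy of \cite{A95, J93} to the degenerate case of a multiplicity-two limiting eigenvalue. First I would construct explicit trial functions: set $\Psi^{\eps,e}$ equal to $\phi^e$ on $\Omega_0$ and equal to $\xi_\mu^{\phi^e}$ on the neck $R_\eps$, with an appropriate smooth cutoff near the junction points $p_0, p_1$ to make the gluing $H^1$, and similarly $\Psi^{\eps,o}$. By Assumption \ref{ass:eig} the boundary values $\phi^e(p_0)=\phi^e(p_1)$ (resp. $\phi^o(p_0)=-\phi^o(p_1)$) are well defined and the Sturm--Liouville solution $\xi_\mu^{\phi^e}$ is the unique solution of \eqref{eq:OntheInt2} with those boundary values (using $\mu \notin \{\tau_n\}$), so these trial functions are well defined. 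One computes that the Rayleigh quotient of $\Psi^{\eps,e}$ equals $\mu + O(\eps)$, because the neck contributes $\eps \Theta_\mu(\phi^e(p_0),\phi^e(p_1)) + o(\eps)$ to both numerator and denominator while $\|\phi^e\|_{L^2}=1$. The min-max principle then places an eigenvalue of $\Omega_\eps$ within $O(\eps)$ of $\mu$; combined with Remark \ref{rem:order} (the indexing from \cite[Theorem 2.2]{A95}), the two branches $\lambda^{\eps,e}, \lambda^{\eps,o}$ are captured.

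The core of the argument is then a quantitative spectral-gap / almost-orthogonality estimate. Since exactly two eigenbranches converge to $\mu$ and all other eigenvalues of $\Omega_\eps$ stay a fixed distance $\delta_0 > 0$ away from $\mu$ for small $\eps$ (again by \cite[Theorem 2.2]{A95} and Assumption \ref{ass:eig}, using $\mu \notin \{\tau_n\}$), the spectral projection $P^\eps$ onto $\mathrm{span}\{\varphi^{\eps,e},\varphi^{\eps,o}\}$ satisfies $\|(I-P^\eps)\Psi^{\eps,e}\|_{H^1(\Omega_\eps)}^2 = o(\eps)$: the standard resolvent estimate gives $\|(I-P^\eps)\Psi^{\eps,e}\|_{L^2}^2 \le \delta_0^{-2}\|(\Delta+\mu)\Psi^{\eps,e}\|_{(H^1)^*}^2$, and the right-hand side is controlled by the $H^1$-defect of $\Psi^{\eps,e}$ from being an eigenfunction, which is $o(\eps)$ by our construction (the only error comes from the cutoff region near $p_0,p_1$, whose measure is $O(\eps)$, and from the fact that $\xi_\mu^{\phi^e}$ solves the neck equation exactly). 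Upgrading $L^2$ control to $H^1$ control uses the eigenvalue equation and the uniform bound on $\mu$. The same bound holds for $\Psi^{\eps,o}$.

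Next I would diagonalize within the two-dimensional spectral subspace: write $P^\eps\Psi^{\eps,e} = \alpha \varphi^{\eps,e} + \beta \varphi^{\eps,o}$ and show $|\beta| = o(1)$ (hence, after normalization, $\varphi^{\eps,e}$ is within $o(\eps)^{1/2}$ in $H^1$ of $\Psi^{\eps,e}$). This is where the even/odd symmetry is essential and is the main obstacle: I expect to invoke Lemma \ref{lem:SymmetryEigF} to conclude that $\varphi^{\eps,e}$ and $\varphi^{\eps,o}$ can be chosen respectively symmetric and antisymmetric across $\{x=\tfrac12\}$, so that $\langle \Psi^{\eps,e}, \varphi^{\eps,o}\rangle = 0$ exactly by parity — this kills the cross term $\beta$ outright and avoids having to track the eigenvalue splitting rate. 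Without this symmetry one would need the finer separation $|\lambda^{\eps,e}-\lambda^{\eps,o}|$ from Theorem \ref{thm:EigVar} and a more delicate argument; the symmetry reduction is what makes the two branches decouple cleanly. Finally, restricting the resulting global $H^1(\Omega_\eps)$ estimate $\|\varphi^{\eps,e}-\Psi^{\eps,e}\|_{H^1(\Omega_\eps)}^2 = o(\eps)$ to $\Omega_0$ and to $R_\eps$ separately, and noting $\Psi^{\eps,e} = \phi^e$ on $\Omega_0$ and $\Psi^{\eps,e} = \xi_\mu^{\phi^e}$ on $R_\eps$ away from the $O(\eps)$-measure cutoff collar (whose contribution is absorbed into the $o(\eps)$), yields the four claimed estimates; the odd case is identical.
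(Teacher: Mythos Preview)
Your quasimode/spectral-projection strategy is a genuinely different route from the paper's. The paper does not build a trial function and project; instead it runs an energy \emph{squeeze}: following \cite[Section~5]{A95}, it proves a lower bound of the form
\[
\lambda^{\eps} \geq \mu + \eps\,\Theta_\mu + \tilde R(\varphi^\eps) + o(\eps),
\]
where $\tilde R\geq 0$ collects precisely the quantities $\|\nabla(\varphi^\eps - (\varphi^\eps,\phi)_{\Omega_0}\phi)\|_{\Omega_0}^2$, $\|\partial_y\varphi^\eps\|_{R_\eps}^2$, and $\|\partial_x\varphi^\eps - \xi_\eps'\|_{R_\eps}^2$. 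Matching this against the upper bound of Lemma~\ref{lem:LwrBd} forces $\tilde R = o(\eps)$, and each piece of $\tilde R$ being $o(\eps)$ gives the four $H^1$ estimates directly. The symmetry from Lemma~\ref{lem:SymmetryEigF} is used exactly as you describe, to decouple the even and odd branches.

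Your approach has a real gap at the defect estimate. You claim $\|(\Delta+\mu)\Psi^{\eps,e}\|_{(H^1)^*}^2 = o(\eps)$ because ``the only error comes from the cutoff region'' and because ``$\xi_\mu^{\phi^e}$ solves the neck equation exactly''. Neither is correct: $\xi_\mu^{\phi^e}(x)$ solves only the one-dimensional Sturm--Liouville equation, not the two-dimensional Neumann problem on $R_\eps$, and more importantly the weak form against a test function $v$ produces an interface flux term
\[
\xi'(0)\int_0^{\eps g(0)} v(0,y)\,dy
\]
arising from the mismatch between $\partial_\nu\phi^e = 0$ on $\partial\Omega_L$ and $\xi'(0)\neq 0$ on the neck side. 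The naive trace bound gives only $\bigl|\int_0^{\eps g(0)} v\bigr| \leq C\eps^{1/2}\|v\|_{H^1}$, hence $\|(\Delta+\mu)\Psi\|_{(H^1)^*}^2 = O(\eps)$, not $o(\eps)$. This $O(\eps^{1/2})$ flux is not incidental: it is exactly what generates the first-order correction $\eps\Theta_\mu = -2g(0)\xi(0)\xi'(0)\,\eps$ in \eqref{eq:Theta1and2}. To make your resolvent argument yield $o(\eps)$ you would need a sharper capacity-type bound on this interface functional (something like $\bigl|\int_0^{\eps g(0)} v\bigr| \lesssim \eps\sqrt{|\log\eps|}\,\|v\|_{H^1}$ would suffice), which is non-trivial and which you do not address. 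The paper's squeeze argument avoids this issue entirely because it never needs to estimate a quasimode defect.
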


\quad This result implies that $\varphi^{\eps,e}$ and $\varphi^{\eps,o}$ closely resemble the respective eigenfunctions $\phi^e$ and $\phi^o$ in the separated domain $\Omega_0$, while in the neck, the perturbed eigenfunctions behave like extrusions of Sturm-Liouville solutions to (\ref{eq:OntheInt2}). This comparison is key to determining the number of nodal domains produced by the eigenfunctions for small positive $\epsilon$. Theorems \ref{thm:EigVar} and \ref{thm:EigFnVar} are proved in Sections \ref{sec:Upper} and \ref{sec:Lower} by extending the original proof of Arrieta from \cite{A95} to eigenvalues with multiplicity. The estimates in these theorems serve as the basis for studying the nodal deficiency (\ref{eq:NodDef}) in $\Omega_\eps$ and establishing Theorem \ref{thm:Main}.

%\begin{theorem} \label{thm:Main}
%There exists a constant $\eps_0>0$, depending only on the geometry of $\Omega_0$, the function $g$ defining the neck, and $\phi$, such that, for $0<\eps<\eps_0$,
%\begin{align*}
 %   \upvartheta(\varphi^{\eps,e},\Omega_\eps) \geq 2\upvartheta(\phi,\Omega_L) \quad \textrm{and} \quad
 %    \upvartheta(\varphi^{\eps,o},\Omega_\eps) \geq 2\upvartheta(\phi,\Omega_L),
%\end{align*}
% with equality if the nodal set of $\phi$ in $\Omega_L$ contains no crossings.
%\end{theorem}

%\begin{cor} \label{cor:Main}
%If $\phi$ is Courant sharp in $\Omega_L$ and its nodal set contains no crossings, then for all $0<\eps<\eps_0$, the eigenfunctions $\varphi^{\eps,e}$, $\varphi^{\eps,o}$ are a pair of Courant sharp eigenfunctions in $\Omega_{\eps}$.

%Conversely, if $\phi$ is not Courant sharp in $\Omega_L$, then for all $0<\eps<\eps_0$, the eigenfunctions $\varphi^{\eps,e}$, $\varphi^{\eps,o}$ are not Courant sharp eigenfunctions in $\Omega_{\eps}$.
%\end{cor}

\subsubsection{Spectral partitions.}\label{sec:CourantSharpMotiv} Motivation for the search for Courant sharp eigenfunctions was given by the work of Helffer, Hoffmann-Ostenhof, and Terracini upon discovery that nodal domains produced by a Courant sharp eigenfunction of (\ref{eq:PDE}) form a minimal, bipartite equipartition of $\Omega$ \cite{BH17}. Namely, the nodal domains of an eigenfunction of (\ref{eq:PDE}) minimize the energy functional
\begin{align*}
 \sup_{1\leq i \leq k}\left\{ \lambda_1(D_i)\,:\,\Omega = \bigcup_{j=1}^{k}D_j\right\}
\end{align*}
over all partitions $\Omega = \bigcup_{j=1}^{k}D_j$ into $k$ sub-domains \cite{HH13}. Here $\lambda_1(D_i)$ is the smallest Laplacian eigenvalue over $D_i$ with Dirichlet boundary conditions along $\p D_i\subset \Omega$ and Neumann boundary conditions along $\p D_i\cap \p\Omega$. Courant sharp eigenfunctions (with varying boundary conditions) have since been identified on squares, the torus, spheres, and other symmetric domains \cite{BBF17, HH14, HHT10, BS16}. The inequalities in Theorem \ref{thm:Main} extend these recent efforts in finding Courant sharp eigenfunctions.

\quad As an example, consider the case where $\Omega_L$ is the rectangle  $[0,M]\times[0,1]$, with $M^2>1$ irrational and close to $1$. Then, all Neumann eigenvalues of $\Omega_L$ are simple, and we can choose the neck-defining function $g$ and joining points $p_0$, $p_1$ so that Assumptions \ref{ass:eig} and \ref{ass:phi} hold for any $\mu$ in the Neumann spectrum of $\Omega_L$. Provided $M>1$ is sufficiently close to $1$, direct calculation yields that the $1$st, $2$nd, $4$th, and $9$th Neumann eigenfunctions are the only Courant sharp ones for $\Omega_L$. Figure \ref{fig:9CS} displays the nodal domains of each eigenfunction, labeled according to the corresponding eigenvalue's position in the spectrum, where color distinguishes sign.

\begin{figure}[H]
    \centering
    \includegraphics[scale=1]{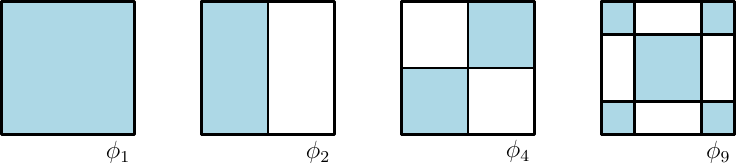}
    \caption{The Courant sharp eigenfunctions on an almost square domain.}
    \label{fig:9CS}
\end{figure}

\quad Note that the nodal sets of $\phi_1$, $\phi_2$ contain no  crossings in the closure of the domain, and hence if $\eps$ is small enough, then Corollary \ref{cor:Main} guarantees the existence of at least four Courant sharp eigenfunctions in $\Omega_\eps$. Since there are only four Courant sharp eigenfunctions for the rectangular domain $\Omega_L$, Corollary \ref{cor:Main} also indicates that for small $\eps>0$ there are no more than eight Courant sharp eigenfunctions of $\Omega_{\eps}$ corresponding to eigenvalue branches of $\Omega_{\eps}$ stemming from the eigenvalues in $\{\mu_k\}_{k=1}^\infty$, as given by (\ref{eq:EigenvalueList}). Note that the nodal sets of $\phi_4$ and $\phi_9$ contain self-intersections. Heuristically, we expect such crossings to open under domain perturbation, reducing the number of nodal domains accordingly. Therefore, we conjecture that, for generic length $M$ and neck-defining function $g$, there are exactly four Courant sharp eigenfunctions of $\Omega_{\eps}$ corresponding to eigenvalue branches of $\Omega_{\eps}$ with limits in $\{\mu_k\}_{k=1}^\infty$.

\quad We briefly illustrate Theorem \ref{thm:Main} and Corollary \ref{cor:Main} numerically using MATLAB. In Figures \ref{fig:CS} and \ref{fig:opening} below, MATLAB approximations to certain eigenfunctions of a symmetric rectangular dumbbell are shown. This dumbbell has a straight neck (that is, $g\equiv1$) with $\Omega_L = [0,M]\times[0,1]$, where $M^2 = (28)^{2/3}$ is irrational. 

\quad In Figure \ref{fig:CS}, the $6$th and $7$th Neumann eigenfunctions of the dumbbell are shown. This corresponds to a choice of eigenpair $(\mu,\phi)$ for the rectangle given by $\mu =\tfrac{4\pi^2}{M^2}$ and $\phi$ a multiple of $\cos(2\pi x/M)$. Since Assumptions \ref{ass:eig} and \ref{ass:phi} are satisfied and $\phi$ is the third Courant sharp eigenfunction of $\Omega_L$ with no crossings in its nodal set, Corollary \ref{cor:Main}, indicates that for sufficiently small neck width, we expect a Courant sharp pair of eigenfunctions in the rectangular dumbbell. The eigenvalue $\mu$ is between the first and second Dirichlet eigenvalues of the connecting interval $[0,2]$, and so this pair of eigenfunctions are the $6$th and $7$th Neumann eigenfunctions in the dumbbell. The nodal sets shown in Figure \ref{fig:CS} agree that these eigenfunctions are Courant sharp.

\begin{figure}[H]
    \centering
    \includegraphics[scale=0.3]{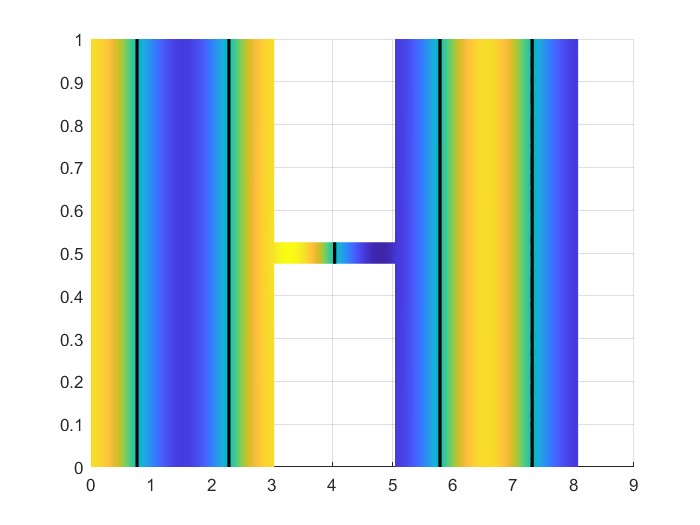}
    \includegraphics[scale=0.3]{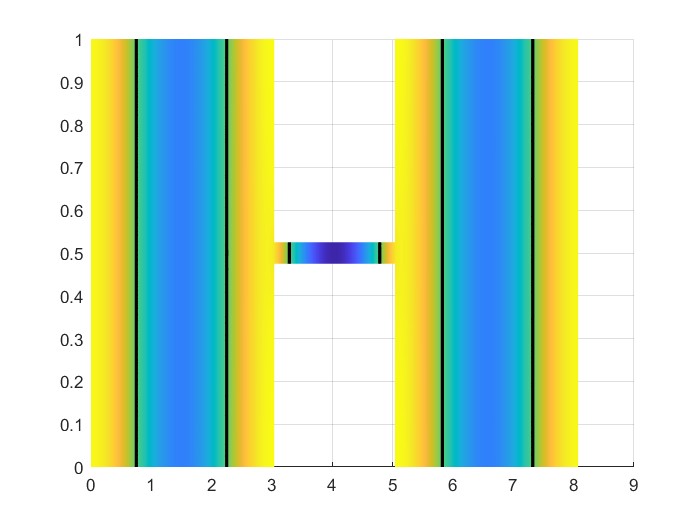}  
    \caption{A pair of Courant sharp eigenfunctions for a rectangular dumbbell.\\
    Figure credit: Steven Zucca}
    \label{fig:CS}
\end{figure}

\begin{figure}[H]
    \centering
    \includegraphics[scale=0.4]{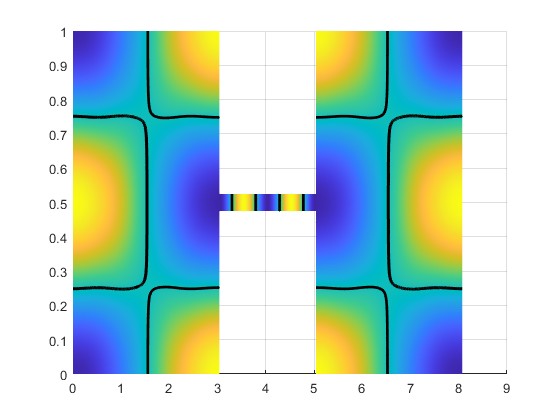}
   \caption{An eigenfunction of a rectangular dumbbell with no nodal crossings. \\
   Figure credit: Steven Zucca}
    \label{fig:opening}
\end{figure}

\quad In Figure \ref{fig:opening}, the $31$st Neumann eigenfunction of the dumbbell is shown. This corresponds to a choice of eigenpair $(\mu,\phi)$ for the rectangle given by $\mu = \tfrac{\pi^2}{M^2} + 4\pi^2$ and $\phi$ a multiple of $\cos(\pi x/M)\cos(2\pi y)$. Since Assumptions \ref{ass:eig} and \ref{ass:phi} are satisfied and $\phi$ has $6$ nodal domains, Theorem \ref{thm:Main} indicates that for sufficiently small neck width, the perturbed eigenfunction in the dumbbell must have nodal deficiency at least $2(14-6) = 16$. The eigenvalue $\mu$ is between the $4$th and $5$th Dirichlet eigenvalues of the connecting interval $[0,2]$, and so this eigenfunction is the $31$st Neumann eigenfunction in the dumbbell. The nodal set displayed in Figure \ref{fig:opening} indicates a nodal deficiency of $31-11 = 20 >16$, agreeing with Theorem \ref{thm:Main}. Note that this also gives evidence to our heuristic above: the crossings in the nodal set of $\phi$ have opened under this domain perturbation, leading to an increase in the nodal deficiency of the eigenfunction in the dumbbell.

\subsection{Outline.}\label{sec:Outline} The structure of the paper is as follows. Throughout this paper, we work under Assumption \ref{ass:eig}. In Sections \ref{sec:Upper} and \ref{sec:Lower} we prove the eigenvalue and eigenfunctions estimates from Theorems \ref{thm:EigVar} and \ref{thm:EigFnVar}, largely following the strategies featured in \cite[Sections $4-5$]{A95}. In Section \ref{sec:Upper}, we use the variational formulation of the eigenvalue to recover an upper bound for each eigenvalue branch stemming from $\mu$. The lower bound on the eigenvalues and estimates for the eigenfunction are then jointly established in Section \ref{sec:Lower}. In Section \ref{sec:NDoriginal}, we study the Sturm-Liouville equation (\ref{eq:OntheInt2}) under Assumption \ref{ass:phi} to relate the ordering of the eigenvalues in Theorem \ref{thm:EigVar} to the number of zeros of the odd and even solutions to this equation. Finally, in Section \ref{sec:NDperturbed}, we prove Theorem \ref{thm:Main} by counting the number of nodal domains of the eigenfunctions $\varphi^{\eps,e}$ and $\varphi^{\eps,o}$ in $R_\eps$ and by estimating from above the number of their nodal domains in $\Omega_L$ and $\Omega_R$. This final argument relies on the Faber-Krahn Theorem to rule out any nodal domains of sufficiently small area.

%%%%%%%%%%%%%%%%%%%%%%%%%
\section{An Eigenvalue Upper Bound}\label{sec:Upper}

\quad In this section, we establish, under Assumption \ref{ass:eig}, an upper bound for the eigenvalue branches $\lambda^{\eps,e},\lambda^{\eps,o}$, corresponding to orthonormal eigenfunctions $\varphi^{\eps,e},\varphi^{\eps,o}$, stemming from $\mu$. The analysis in this section follows the method of \cite[Section $4$]{A95}, which considered the case that $\mu$ was a simple Neumann eigenvalue of $\Omega_0 = \Omega_L\cup\Omega_R$. It pairs with an analogous eigenvalue estimate in Section \ref{sec:Lower} to establish Theorem \ref{thm:EigVar}. The eigenvalue upper bound is derived using the variational formulation of eigenvalues, so we first introduce some of the test functions that we use.

\quad Let $\{\mu_k\}_{k=1}^\infty$ denote the Neumann eigenvalues of (\ref{eq:PDE}) in $\Omega_0$ and $\{\tau_m\}_{m=1}^\infty$ the Dirichlet eigenvalues of (\ref{eq:OntheInt2}) on the unit interval. As before, we set $$\{\lambda_j\}_{j=1}^\infty=\{\mu_k\}_{k=1}^\infty\cup\{\tau_m\}_{m=1}^\infty, $$
ordered and counted with multiplicity. Using Assumption \ref{ass:eig}, suppose that  the eigenvalue $\mu$ of (\ref{eq:PDE}) in $\Omega_0$ corresponding to the eigenfunctions $\phi^e,\phi^o$ appears precisely as $\lambda_n$ and $\lambda_{n+1}$ in this sequence. Following the techniques featured in \cite{A95}, let $E:H^1(\Omega_0)\to H^1(\mathbb{R}^2)$ be a bounded, linear extension and define 
\begin{equation}
    \psi_j^\eps=\begin{cases}
        E\phi_k, & \textrm{if } \lambda_j=\mu_k\\ \begin{cases}0, & \textrm{in }\Omega_0 \\ \epsilon^{-1/2} \gamma_m, & \textrm{in }R_\eps\end{cases} & \textrm{if } \lambda_j=\tau_m
    \end{cases} \nonumber
\end{equation}
where $\phi_k$ is an $L^2(\Omega_0)$-normalized solution of (\ref{eq:PDE}) in $\Omega_0$ with eigenvalue $\mu_k$ and $\gamma_m$ is an $L^2([0,1])$-normalized solution of (\ref{eq:OntheInt2}) in $[0,1]$ with Dirichlet eigenvalue $\tau_m$. Since the eigenvalue $\mu_k$ appears an even number of times in the Neumann spectrum of symmetric $\Omega_0=\Omega_L\cup\Omega_R$, we may choose the $\phi_k$ so that they alternate between even and odd functions with respect to the vertical line $\{x=\tfrac{1}{2}\}$. The functions $\gamma_m$ are even and odd with respect to $\{x=\tfrac{1}{2}\}$ for $m$ odd and even respectively. We also choose the extension operator $E$ to preserve such symmetry. 

\quad Let $M(\lambda_n)$ be the multiplicity of $\lambda_n$ in $\{\mu_k\}_{k=1}^\infty\cup\{\tau_m\}_{m=1}^\infty$, and let $M_e(\lambda_n)$ and $M_o(\lambda_n)$ denote the number of even and odd functions $\psi_j^\eps$ among those $j$ with $\lambda_j=\lambda_n$. Note that $M_e(\lambda_n)+M_o(\lambda_n) = M(\lambda_n)$. Moreover, from \cite[Theorem $2.2$]{A95}, there are $M(\lambda_n)$ eigenvalue branches $\lambda_j^\eps$ of $\Omega_{\eps}$ with $\lim_{\eps\to0}\lambda_j^{\eps} = \lambda_n$. The following lemma establishes a correspondence between the even and odd functions $\psi_j^\eps$ and even and odd eigenfunctions of (\ref{eq:PDE}) in $\Omega_\eps$.

\begin{lemma}\label{lem:SymmetryEigF}
    For small $\epsilon>0$, there exists an orthonormal basis of eigenfunctions in $\Omega_{\eps}$ such that for each $\lambda_n\leq \mu$, among those eigenfunctions with eigenvalues $\lambda_j^{\eps}$ satisfying $\lim_{\eps\to0}\lambda_j^{\eps} = \lambda_n$, there are precisely $M_{e}(\lambda_n)$ and $M_o(\lambda_n)$ functions in this basis that are even and odd with respect to the vertical line $\left\{x=\tfrac{1}{2}\right\}$ respectively.
\end{lemma}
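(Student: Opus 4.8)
The plan is to exploit the $\mathbb{Z}/2$ reflection symmetry $\sigma\colon (x,y)\mapsto (1-x,y)$ of the domain $\Omega_\eps$ directly, rather than tracking symmetry through the perturbation. Since $g(x)=g(1-x)$ and $\Omega_0$ is symmetric across $\{x=\tfrac12\}$, the map $\sigma$ is an isometry of $\Omega_\eps$, hence induces a unitary involution $U$ on $L^2(\Omega_\eps)$ that commutes with the Neumann Laplacian. Therefore $L^2(\Omega_\eps)$ splits orthogonally into the $\pm1$ eigenspaces of $U$ — the even and odd subspaces — each of which is invariant under the resolvent, so the Neumann Laplacian restricts to a self-adjoint operator on each. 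Consequently one may choose an orthonormal eigenbasis of $\Omega_\eps$ consisting entirely of functions that are either even or odd with respect to $\{x=\tfrac12\}$. This is automatic and requires no smallness of $\eps$; the content of the lemma is the \emph{counting} statement, i.e. that for each limiting value $\lambda_n\le\mu$ the number of even (resp. odd) eigenfunctions among the branches limiting to $\lambda_n$ is exactly $M_e(\lambda_n)$ (resp. $M_o(\lambda_n)$).

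For the count, first I would record the analogous symmetry decomposition on the limit side: the even and odd subspaces of $L^2(\Omega_0)\oplus L^2((0,1),g\,dx)$ carry, respectively, $M_e(\lambda_n)$ and $M_o(\lambda_n)$ of the limiting modes at level $\lambda_n$ — this is exactly how the $\psi_j^\eps$ were constructed ($\phi_k$ chosen alternately even/odd, $\gamma_m$ even for $m$ odd and odd for $m$ even, and $E$ symmetry-preserving). Next, I would invoke \cite[Theorem~2.2]{A95}: for $\eps$ small there are exactly $M(\lambda_n)$ eigenvalue branches $\lambda_j^\eps$ with $\lambda_j^\eps\to\lambda_n$, and the corresponding spectral projections $P_n^\eps$ converge (in the appropriate sense, e.g. strongly, after the standard extension/restriction identification) to the projection $P_n^0$ onto the limiting $M(\lambda_n)$-dimensional space. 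The key point is that $U$ commutes with $P_n^\eps$ for every $\eps$ (since $U$ commutes with the Laplacian and the branch at level $\lambda_n$ is spectrally isolated from the others for small $\eps$), and the even/odd decomposition of $\mathrm{range}(P_n^\eps)$ is then dictated by the ranks of $P_n^\eps U_{\pm}$, where $U_\pm=\tfrac12(I\pm U)$. Since $\mathrm{rank}$ is lower semicontinuous and the total rank $M(\lambda_n)$ is fixed, these ranks cannot jump; comparing with the limit gives $\mathrm{rank}(P_n^\eps U_+) = M_e(\lambda_n)$ and $\mathrm{rank}(P_n^\eps U_-)=M_o(\lambda_n)$ for small $\eps$. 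Choosing an orthonormal eigenbasis of $\mathrm{range}(P_n^\eps)$ adapted to the $U$-eigenspaces — possible since $U$ and the Laplacian commute and can be simultaneously diagonalized on this finite-dimensional space — yields the claimed counts, and doing this for every $\lambda_n\le\mu$ (finitely many) and completing to a full basis finishes the proof.

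The main obstacle is making the convergence of spectral projections precise in this dumbbell setting, where the domains $\Omega_\eps$ vary with $\eps$ and one must compare functions on different domains; this is handled exactly as in \cite{A95} via the extension operator $E$ and restriction, and the statement "$M(\lambda_n)$ branches, no more no less, converge to $\lambda_n$" together with a uniform spectral gap around the cluster is already provided by \cite[Theorem~2.2]{A95}. A secondary technical point is ensuring the gap: one needs that for small $\eps$ no eigenvalue outside the cluster at $\lambda_n$ comes within a fixed distance, which again follows from \cite[Theorem~2.2]{A95} since the full limiting spectrum is $\{\lambda_j\}$ and $\lambda_n$ has finite multiplicity. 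Once the projection convergence and the commutation $[U,P_n^\eps]=0$ are in hand, the rank-constancy argument is soft and the conclusion is immediate.
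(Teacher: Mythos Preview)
Your proposal is correct and follows essentially the same route as the paper. Both arguments first observe that the reflection commutes with the Neumann Laplacian, yielding an orthonormal eigenbasis of purely even and odd functions, and then use \cite[Theorem~2.2]{A95} together with a dimension/rank count to pin down the number of each parity in the cluster at $\lambda_n$. The only difference is cosmetic: you phrase the counting step via convergence of spectral projections $P_n^\eps$ and constancy of $\mathrm{rank}(P_n^\eps U_\pm)$, whereas the paper works directly with the eigenfunction convergence statement in \cite[Theorem~2.2]{A95} (each $\varphi^\eps$ converges to its projection onto $\mathrm{span}\{\psi_j^\eps:\lambda_j=\lambda_n\}$), notes that an even $\varphi^\eps$ is automatically orthogonal to the odd $\psi_j^\eps$ and vice versa, and concludes by the pigeonhole $M_e'+M_o'=M_e+M_o$ with $M_e'\le M_e$, $M_o'\le M_o$. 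Your projection-rank formulation is slightly more abstract but encodes exactly the same mechanism; the one place to be careful is that \cite[Theorem~2.2]{A95} is stated for eigenfunctions rather than projections, so your ``projection convergence'' is really a repackaging of that eigenfunction statement---which you acknowledge.
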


\quad In particular, applying this lemma for $\lambda_n=\mu$, with $M_e(\mu) = M_o(\mu)=1$ by Assumption \ref{ass:eig}, justifies the $e,o$ notation. Lemma \ref{lem:SymmetryEigF} states that for small perturbations, the symmetry of the domain $\Omega_0$ carries through to symmetry of the eigenfunctions in $\Omega_\eps$. Recall that by construction, the neck $R_\eps$ is also symmetric. Of particular importance is the fact that this lemma ensures that orthogonality is preserved in the sense that
\begin{equation}
    \left(\varphi^{\eps,e},\phi^o\right)_{\Omega_0}=\left(\varphi^{\eps,o},\phi^e\right)_{\Omega_0}=0 \nonumber
\end{equation}
for small $\epsilon>0$.

\begin{proof}[Proof of Lemma \ref{lem:SymmetryEigF}.] For $\psi\in H^1(\Omega_\eps)$, let $S\psi$ denote the reflection of $\psi$ across the vertical line $x=\tfrac{1}{2}$. If $\varphi$ solves (\ref{eq:PDE}) in $\Omega_\eps$ with eigenvalue $\lambda$, then $\varphi\pm S\varphi$ is either identically zero or another non-identically zero solution to (\ref{eq:PDE}) in $\Omega_\eps$ with eigenvalue $\lambda$. Note that $\varphi$ is even or odd across $x=\tfrac{1}{2}$ if and only if one of $\varphi\pm S\varphi$ is identically zero. Since $\Omega_{\eps}$ is symmetric with respect to the line $\{x=\tfrac{1}{2}\}$, we can simultaneously find an eigenbasis of $S$ and Neumann eigenfunctions of $\Omega_{\eps}$. In other words, we can find an orthonormal basis of Neumann eigenfunctions of $\Omega_{\eps}$ that are all odd or even with respect to $\{x=\tfrac{1}{2}\}$.

\quad To count the number of these eigenfunctions that, for small $\eps>0$, are even or odd, we use the second part of \cite[Theorem $2.2$]{A95}. This states that any eigenfunction $\varphi^{\eps}$ of (\ref{eq:PDE}) in $\Omega_{\eps}$ with eigenvalue converging to $\lambda_n$ must converge in $H^1(\Omega_{\eps})$ to its projection onto those $\psi_j^{\eps}$ with $\lambda_j=\lambda_n$. There are $M(\lambda_n)$ such eigenfunctions of $\Omega_{\eps}$ in the orthonormal basis and $M(\lambda_n)$ such $\psi_j^{\eps}$, and any even or odd eigenfunction $\varphi^{\eps}$ is automatically orthogonal to any odd or even $\psi_j^\eps$ respectively. Therefore, for small $\eps>0$, there must be the same number of even and odd eigenfunctions $\varphi^{\eps}$ as there are for the $\psi_j^{\eps}$, concluding the proof. 
\end{proof}

\quad To understand the spectral flow upon domain perturbation, we make use of the variational characterization of eigenvalues and consider the Rayleigh quotient with well-chosen test functions. In the following statement, we distinguish the eigenvalue branches stemming from $\mu$ according to order, supposing $\lambda^{\eps,\uparrow}\geq\lambda^{\eps,\downarrow}$.

\begin{lemma} \label{lem:LwrBd} Let $\lambda^{\eps,\downarrow},\lambda^{\eps,\uparrow}$ denote the upper and lower eigenvalue branches that approach $\mu$ as $\epsilon\to 0$. Then, for small $\epsilon>0$,
    \begin{align}
    \lambda^{\epsilon,\downarrow}&\leq \mu+\epsilon \min\left\{\Theta_{\mu}\left(\phi^e(p_0),\phi^e(p_1)\right), \Theta_{\mu}\left(\phi^o(p_0),\phi^o(p_1)\right)\right\}+o(\epsilon), \nonumber \\ 
    \lambda^{\epsilon,\uparrow}&\leq \mu+\epsilon \max\left\{\Theta_{\mu}\left(\phi^e(p_0),\phi^e(p_1)\right), \Theta_{\mu}\left(\phi^o(p_0),\phi^o(p_1)\right)\right\}+o(\epsilon), \nonumber
\end{align}
where $\Theta_\mu$ is given in (\ref{eq:ThetaEq}). 
\end{lemma}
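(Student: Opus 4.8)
The plan is to use the min–max characterization of the eigenvalues of the Neumann Laplacian on $\Omega_\eps$ together with carefully chosen test functions built from $\phi^e$, $\phi^o$, and the Sturm–Liouville profiles $\xi_\mu^{\phi^e}$, $\xi_\mu^{\phi^o}$ in the neck. Recall from the setup that $\mu$ appears as $\lambda_n=\lambda_{n+1}$ in the combined list, so there are exactly two branches $\lambda^{\eps,\downarrow}\leq\lambda^{\eps,\uparrow}$ tending to $\mu$, and by Remark~\ref{rem:order} these are the $(n-\text{something})$th and next Neumann eigenvalues of $\Omega_\eps$; the content of the lemma is to pin down their location to first order in $\eps$. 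First I would fix notation for the two trial functions: for $\bullet\in\{e,o\}$, set $w^\eps_\bullet$ to equal $\phi^\bullet$ on $\Omega_0$ and $\xi_\mu^{\phi^\bullet}$ (viewed as a function of $x$ only, extruded in $y$) on $R_\eps$; these match continuously at the two seams $\{x=0,1\}$ because $\xi_\mu^{\phi^\bullet}(0)=\phi^\bullet(p_0)$ and $\xi_\mu^{\phi^\bullet}(1)=\phi^\bullet(p_1)$ by construction, so $w^\eps_\bullet\in H^1(\Omega_\eps)$.

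The key computation is the asymptotics of the Rayleigh quotient $\cR(w^\eps_\bullet)=\big(\int_{\Omega_\eps}|\nabla w^\eps_\bullet|^2\big)\big/\big(\int_{\Omega_\eps}|w^\eps_\bullet|^2\big)$. In the numerator, the $\Omega_0$ part contributes $\int_{\Omega_0}|\nabla\phi^\bullet|^2=\mu\int_{\Omega_0}|\phi^\bullet|^2=\mu$ (using that $\phi^\bullet$ solves the Neumann problem with eigenvalue $\mu$ and is $L^2$-normalized), while the neck part contributes $\eps\int_0^1 g(x)\,|\tfrac{d}{dx}\xi_\mu^{\phi^\bullet}(x)|^2\,dx$ plus a negligible boundary-flux correction: integrating by parts on $\Omega_0$ produces $\int_{\p\Omega_0}\phi^\bullet\,\p_\nu\phi^\bullet=0$ on the true boundary, but along the two tiny seams the normal derivative of $\phi^\bullet$ need not vanish — here I would use Assumption~\ref{ass:eig} together with the local flatness of $\p\Omega_0$ near $p_0,p_1$ from Definition~\ref{defn:dumbbell} to control that seam integral by $o(\eps)$ (this is exactly where Arrieta's argument in \cite[Section~4]{A95} is localized and where the flat-collar hypothesis is used). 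Similarly the denominator is $\int_{\Omega_0}|\phi^\bullet|^2+\eps\int_0^1 g(x)|\xi_\mu^{\phi^\bullet}|^2\,dx=1+\eps\int_0^1 g|\xi_\mu^{\phi^\bullet}|^2+o(\eps)$. Dividing and expanding the geometric series in $\eps$ gives
\begin{align}
\cR(w^\eps_\bullet)=\mu+\eps\Big(\int_0^1 g\,|\tfrac{d}{dx}\xi_\mu^{\phi^\bullet}|^2\,dx-\mu\int_0^1 g\,|\xi_\mu^{\phi^\bullet}|^2\,dx\Big)+o(\eps)=\mu+\eps\,\Theta_\mu(\phi^\bullet(p_0),\phi^\bullet(p_1))+o(\eps).\nonumber
\end{align}

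With these two estimates in hand I would close the argument via min–max. Since $w^\eps_e$ and $w^\eps_o$ are, for small $\eps$, close in $H^1$ to the orthonormal pair $\phi^e,\phi^o$ (which are $L^2(\Omega_0)$-orthogonal by symmetry), they span an approximately two-dimensional subspace $V^\eps\subset H^1(\Omega_\eps)$ transverse to the span of the lower eigenfunctions of $\Omega_\eps$ (here one uses Lemma~\ref{lem:SymmetryEigF} to match symmetry types and \cite[Theorem~2.2]{A95} to know the lower branches converge to the $\lambda_j$ with $j<n$). Testing the Courant–Fischer max–min for $\lambda^{\eps,\uparrow}=\lambda_{N}^\eps$ (the larger index) against $V^\eps$: every unit vector in $V^\eps$ has Rayleigh quotient $\leq\mu+\eps\max\{\Theta_\mu(\phi^e(p_0),\phi^e(p_1)),\Theta_\mu(\phi^o(p_0),\phi^o(p_1))\}+o(\eps)$, because the quadratic form restricted to $V^\eps$ is, up to $o(\eps)$ errors controlled by the cross term $\langle\nabla w^\eps_e,\nabla w^\eps_o\rangle_{\Omega_\eps}=o(\eps)$ (the $\Omega_0$ cross term vanishes by orthogonality of $\phi^e,\phi^o$ and $\mu$-eigen equation, the neck cross term is $O(\eps)$ but can be diagonalized simultaneously with the mass form), essentially diagonal with those two eigenvalues. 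Hence $\lambda^{\eps,\uparrow}\leq\mu+\eps\max\{\cdots\}+o(\eps)$. For $\lambda^{\eps,\downarrow}$, I would instead test the min–max for $\lambda_{N-1}^\eps$ against the single vector in $V^\eps$ realizing the minimum of the restricted form, or more cleanly note that the smaller of the two restricted-form eigenvalues is $\mu+\eps\min\{\cdots\}+o(\eps)$ and invoke Courant–Fischer with the $(N-1)$-dimensional competitor subspace $(\text{span of lower eigenfunctions})\oplus(\text{that minimizing line})$. The main obstacle is the careful bookkeeping of the $o(\eps)$ errors — in particular showing the seam flux integral and the neck cross term $\langle\nabla w^\eps_e,\nabla w^\eps_o\rangle_{R_\eps}$ and mass cross term are genuinely $o(\eps)$ rather than merely $O(\eps)$, so that the two test functions decouple to leading order; this is the point at which the proof must follow \cite[Section~4]{A95} most closely, with the extension operator $E$ and the flatness of $\p\Omega_0$ near $p_0,p_1$ doing the essential work.
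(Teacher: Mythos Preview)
Your proposal has a technical slip and a more serious structural gap.

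First, the test functions $w^\eps_\bullet$ are not in $H^1(\Omega_\eps)$: the trace of $\phi^\bullet$ along the seam $\{0\}\times(0,\eps g(0))$ is $\phi^\bullet(0,y)$, which equals $\xi_\mu^{\phi^\bullet}(0)=\phi^\bullet(0,0)$ only at the single point $y=0$, not along the whole segment. The paper fixes this with an explicit transition layer of width $\eps$ (the operator $E_\eps^\mu$ in \eqref{eq:Eextend}) and then shows that replacing it by your naive extension $\widetilde E_\eps^\mu$ costs only $O(\eps^2)$ in $H^1(R_\eps)$; see \eqref{eq:ExtendEst}.

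The deeper issue is the min--max step. To bound $\lambda^{\eps,\downarrow}=\lambda_n^\eps$ from above you need either a test vector orthogonal to $\varphi_1^\eps,\dots,\varphi_{n-1}^\eps$, or an $n$-dimensional test space with controlled maximal Rayleigh quotient. Your two-dimensional $V^\eps$ does neither by itself, and augmenting it by the \emph{true} lower eigenfunctions $\varphi_j^\eps$ (as you propose) forces you to orthogonalize $w^\eps_\bullet$ against them. The resulting Rayleigh quotient picks up a positive correction of order $\sum_{j<n}(\mu-\lambda_j^\eps)\,c_j^2$ with $c_j=(\varphi_j^\eps,w^\eps_\bullet)_{\Omega_\eps}$. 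The dominant part of $c_j$ is $(\varphi_j^\eps,\phi^\bullet)_{\Omega_0}$, and \cite[Theorem~2.2]{A95} gives only $c_j=o(1)$ with no rate. Hence the correction is $o(1)$ but not $o(\eps)$, and the first-order term $\eps\Theta_\mu$ is lost.

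The paper sidesteps this by building the $n$-dimensional test space from the explicit approximations $\psi_j^\eps$ (bounded extensions of the $\phi_k$, or rescaled $\gamma_m$) rather than from the true $\varphi_j^\eps$. A dimension count still produces some $f$ in this space orthogonal to $\varphi_1^\eps,\dots,\varphi_{n-1}^\eps$, but now the quadratic form $\|\nabla\cdot\|^2-\lambda^{\eps,\downarrow}\|\cdot\|^2$ restricted to the space is a matrix $\mathbf Q_\eps$ whose entries are directly computable: the diagonal entries for $i<n$ are $\lambda_i-\mu+O(\eps)$ (negative, bounded away from zero), and the off-diagonal entries in the last row and column are $o(\eps^{1/2})$. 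Expanding the characteristic polynomial at the nonnegative eigenvalue $\mathbf q_\eps$ then forces $q_{n,n}\geq -o(\eps)$, which is precisely the desired bound. For $\lambda^{\eps,\uparrow}$ the paper further exploits the parity decomposition of Lemma~\ref{lem:SymmetryEigF}: once the symmetry type of $\varphi_n^\eps$ is fixed, one runs the same matrix argument entirely in the opposite-parity subspace, so the uncontrolled gap $\lambda^{\eps,\uparrow}-\lambda^{\eps,\downarrow}$ never appears on the diagonal of $\mathbf Q_\eps$.
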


\quad This result highlights the role of $\Theta_\mu$ in governing the first-order variation of the eigenvalues. It also shows that $\lambda^\epsilon-\mu\leq C\eps$ for either eigenvalue $\lambda^\eps\in\{\lambda^{\eps,e},\lambda^{\eps,o}\}$. In Section \ref{sec:Lower}, we pair Lemma \ref{lem:LwrBd} with a lower eigenvalue bound that identifies the branch order with respect to $\Theta_\mu$.

\begin{proof}[Proof of Lemma \ref{lem:LwrBd}.] Throughout this proof, we take the inner product to be in $L^2(\Omega_\eps)$, inducing the norm $\norm{\cdot}$ unless stated otherwise. For other sets $U$, we use $(\cdot,\cdot)_U$ and $\norm{\cdot}_U$ to denote the $L^2(U)$ inner product and norm.  For $f\in H^1(\Omega_0)$, we define
\begin{equation}\label{eq:Eextend}
    E_\eps^\mu f=\begin{cases}
        f(x,y), & \textrm{if } (x,y)\in\Omega_0 \\
        f(-x,y)+\frac{x}{\eps}\left(\xi^f_\mu(\eps)-f(-x,y)\right), & \textrm{if } (x,y)\in R_\eps, \: 0\leq x<\eps \\
        f(2-x,y)+\frac{1-x}{\eps}\left(\xi_\mu^f(1-\eps)-f(2-x,y)\right), & \textrm{if } (x,y)\in R_\eps, \: 1-\eps<x\leq 1 \\
        \xi_\mu^f(x), & \textrm{if } (x,y)\in R_\eps, \: \eps\leq x\leq1-\eps
    \end{cases}
\end{equation}
as an extension of $f$ to $\Omega_\eps$ such that $E_\eps^\mu f\in H^1(\Omega_\eps)$. Here $\xi_\mu^f$ denotes the solution to (\ref{eq:OntheInt2}) with eigenvalue $\mu$ and boundary conditions $\xi_\mu^f(0)=f(p_0)$ and $\xi_\mu^f(1)=f(p_1)$. We also set
\begin{equation}\label{eq:Etildeetend}
    \widetilde{E}_\eps^\mu f=\begin{cases}
        f(x,y), & \textrm{if } (x,y)\in\Omega_0 \\ \xi_\mu^f(x), & \textrm{if } (x,y)\in R_\eps
    \end{cases}
\end{equation}
as an extension of $f$ to $\Omega_\eps$ such that $\widetilde{E}_\eps^\mu f\in H^1(\Omega_0)\cap H^1(R_\eps)$ (but not necessarily in $H^1(\Omega_\eps)$). Note that ${E}_\eps^\mu f$ and $\widetilde{E}_\eps^\mu f$ both preserve any even or odd symmetry across $\{x=\tfrac{1}{2}\}$. For $\phi \in \{\phi^e,\phi^o\}$, the region over which the difference $E^\mu_\eps \phi-\widetilde{E}_\eps^\mu \phi$ is nonzero is in $R_\eps$ and of size $O(\epsilon^2)$. Moreover, since $\pa\Omega_L$ and $\pa\Omega_R$ contain a line segment centered at $p_0$ and $p_1$ respectively, $\phi$ and its gradient are bounded in a neighborhood of $p_0$ and $p_1$. Therefore, the operators in (\ref{eq:Eextend}) and (\ref{eq:Etildeetend}) closely approximate each other, in the sense that
\begin{equation}\label{eq:ExtendEst}
    \norm{E_\eps^\mu \phi-\widetilde{E}_\eps^\mu \phi}_{R_\eps}^2+\norm{\nabla\left(E_\eps^\mu \phi-\widetilde{E}_\eps^\mu \phi\right)}^2_{R_\eps}=O(\epsilon^2).
\end{equation}

\quad Suppose that there are $n-1$ entries in $\{\lambda_j\}_{j=1}^{\infty}$ with $\lambda_j<\mu$, so that by using \cite[Theorem 2.2]{A95}, $\lambda^{\eps,\downarrow}$ is the $n$th eigenvalue of $\Omega_{\eps}$ for small $\eps>0$. Set $\chi_\epsilon^e=E_\epsilon^{\mu}\phi^e$ and $\chi_\epsilon^o=E_\epsilon^{\mu}\phi^o$ as extensions of the even and odd eigenfunctions on the separated domain $\Omega_0$. There exist nonzero functions $$f^e\in W^e\eqdef\textrm{span}\{\psi_1^\epsilon,\dots,\psi_{n-1}^\epsilon, \chi_\epsilon^e\}$$ $$f^o\in W^o\eqdef\textrm{span}\{\psi_1^\epsilon, \dots,\psi_{n-1}^\epsilon,\chi_\epsilon^o\}$$ such that both $f^e,f^o$ are orthogonal to the first $n-1$ eigenfunctions $\varphi_1^\epsilon,\dots,\varphi_{n-1}^\epsilon$ solving (\ref{eq:PDE}) in $\Omega_\eps$. These functions exist because $\dim W^e=\dim W^o=n$. Then
\begin{equation}\label{eq:eitheror}
    \lambda^{\epsilon,\downarrow}\leq \min\left\{\frac{\norm{\nabla f^e}^2}{\norm{f^e}^2}, \frac{\norm{\nabla f^o}^2}{\norm{f^o}^2}\right\}. 
\end{equation}
because both $f^e,f^o$ serve as test functions for the Rayleigh quotient.

\quad Let $f\in\{f^e,f^o\}$ be the function that minimizes (\ref{eq:eitheror}) and let $\chi_\epsilon\in\{\chi_\epsilon^e,\chi_\epsilon^o\}$, $\phi\in\{\phi^e,\phi^o\}$ share the index of $f$. We can write $$f=\sum_{i=1}^{n-1}a_i\psi_i^\epsilon+a_n\chi_\epsilon$$ so that
\begin{equation}\label{eq:nonneg}
    \norm{\nabla f}^2-\lambda^{\epsilon,\downarrow}\norm{f}^2=\textbf{a}^T\textbf{Q}_\eps\textbf{a}\geq 0
\end{equation}
for $\textbf{a}^T=(a_1,\dots, a_n)$ and $\textbf{Q}_\eps\in\mathbb{R}^{n\times n}$ given by $(\textbf{Q}_\eps)_{ij}=q_{i,j}$ with
\begin{align}
    q_{i,j}&=q_{j,i}=\left(\nabla \psi_i^\epsilon,\nabla \psi_j^\epsilon\right)-\lambda^{\epsilon,\downarrow}\left(\psi_i^\epsilon,\psi_j^\epsilon\right) \quad \textrm{for} \quad 1\leq i,j\leq n-1 \nonumber \\
    q_{i,n}&=q_{n,i}=\left(\nabla \psi_i^\epsilon,\nabla \chi_\epsilon\right)-\lambda^{\epsilon,\downarrow}\left(\psi_i^\epsilon,\chi_\epsilon\right) \quad \textrm{for} \quad 1\leq i\leq n-1  \nonumber \\
    q_{n,n}&=\norm{\nabla\chi_\epsilon}^2-\lambda^{\epsilon,\downarrow}\norm{\chi_\epsilon}^2. \nonumber
\end{align}
Using \cite[Lemma $3.9$]{A95}, we have
\begin{align}
    \left(\nabla \psi_i^\epsilon,\nabla\psi_j^\epsilon\right)=O(\epsilon^{1/2}) \quad \textrm{and} \quad \left(\psi_i^\epsilon,\psi_j^\epsilon\right)=O(\epsilon^{1/2}) \nonumber
\end{align}
for $i\neq j$. Further, because $R_\epsilon$ is $O(\epsilon)$ in size,
\begin{equation}
    ||\nabla \psi_i^\epsilon||^2=\lambda_i+O(\epsilon) \quad \textrm{and} \quad ||\psi_i^\epsilon||^2=1+O(\epsilon). \nonumber
\end{equation}

\quad Using these results, we find that the diagonal elements of $\textbf{Q}_\eps$ approximate the spectral gaps
\begin{align}\label{eq:theproblem}
    q_{i,i}=\lambda_i-\lambda^{\epsilon,\downarrow}+O(\epsilon) 
\end{align}
for $i=1,\dots, n-1$. For small enough $\eps$, uniformly in $i$, these quantities are negative, since $\lim_{\eps\to0}\lambda^{\eps,\downarrow}=\mu=\lambda_n>\lambda_i$ for $1\leq i \leq n-1$. Meanwhile,
\begin{equation}
    q_{i,j}=O(\epsilon^{1/2}) \quad \textrm{for} \quad i\neq j \nonumber
\end{equation}
with $1\leq i,j\leq n-1$. Finally, for $1\leq i\leq n-1$, we recall $\chi_\epsilon=E^{\mu}_\epsilon\phi$ and so 
\begin{align}
    q_{i,n}&=\left(\nabla \psi_i^\epsilon,\nabla \chi_\epsilon\right)-\lambda^{\epsilon,\downarrow}\left(\psi_i^\epsilon,\chi_\epsilon\right)\nonumber \\
    &=\left(\nabla \psi_i^\epsilon, \nabla \widetilde{E}_\epsilon^{\mu}\phi\right)+\left(\nabla \psi_i^\epsilon, \nabla \left(E_\epsilon^{\mu}-\widetilde{E}_\epsilon^{\mu}\right)\phi\right)_{R_\eps}-\lambda^{\epsilon,\downarrow}\left(\psi_i^\epsilon, \widetilde{E}_\epsilon^{\mu}\phi\right)-\lambda^{\epsilon,\downarrow}\left(\psi_i^\epsilon, \left(E_\epsilon^{\mu}-\widetilde{E}_\epsilon^{\mu}\right)\phi\right)_{R_\eps} \nonumber \\
    &=\left(\nabla \psi_i^\epsilon, \nabla \widetilde{E}_\epsilon^{\mu}\phi\right)-\lambda^{\epsilon,\downarrow}\left(\psi_i^\epsilon, \widetilde{E}_\epsilon^{\mu}\phi\right)+O(\epsilon) \nonumber
\end{align}
using the estimates in (\ref{eq:ExtendEst}) and that $E_\eps^\mu\phi=\widetilde{E}_\eps^\mu \phi$ in $\Omega_0$. We can split the inner product over $\Omega_\epsilon$ into integrals over $\Omega_0$ and $R_\epsilon$, observing that the integrals over $\Omega_0$ are identically zero by orthogonality of eigenfunctions on the separated domain. Thus,
\begin{align}
    q_{i,n}&=\left(\nabla \psi_i^\epsilon, \nabla \xi_{\mu}^{\phi}\right)_{R_\epsilon}-\lambda^{\epsilon,\downarrow}\left(\psi_i^\epsilon, \xi_{\mu}^{\phi}\right)_{R_\epsilon}+O(\epsilon)=(\mu-\lambda^{\eps,\downarrow})\left( \psi_i^\eps,\xi_\mu^\phi\right)_{R_\eps}+O(\eps)=o(\epsilon^{1/2}) \nonumber 
\end{align}
by using integration by parts. The final equality above relies on the fact that $\lim_{\eps\to0}\lambda^{\eps,\downarrow}=\mu$.

\quad By (\ref{eq:nonneg}), there exists an eigenvalue $\textbf{q}_\eps\geq 0$ of $\textbf{Q}_\eps$, i.e. $$\det\left(\textbf{Q}_\eps-\textbf{q}_\eps I\right)=0.$$ We can expend the determinant of $\textbf{Q}_\eps-\textbf{q}_\eps I$ along the final column to get $q_{n,n}=\textbf{q}_\eps+o(\epsilon)$. Applying (\ref{eq:ExtendEst}), we find
\begin{align}
    q_{n,n}&=\norm{\nabla\chi_\epsilon}^2-\lambda^{\epsilon,\downarrow}\norm{\chi_\epsilon}^2=\norm{\nabla E_\epsilon^{\mu}\phi}^2-\lambda^{\epsilon,\downarrow}\norm{E_\epsilon^{\mu}\phi}^2\nonumber \\%=\norm{\nabla \widetilde{E}_\epsilon^{\mu}\phi}^2-\lambda^{\epsilon,\downarrow}\norm{\widetilde{E}_\epsilon^{\mu}\phi}^2+o(\epsilon) 
    &=\left(\norm{\nabla \phi}_{\Omega_0}^2-\lambda^{\epsilon,\downarrow} \norm{\phi}^2_{\Omega_0}\right)+\left(\norm{\nabla\xi_{\mu}^{\phi}}_{R_\epsilon}^2-\lambda^{\epsilon,\downarrow}\norm{\xi_{\mu}^{\phi}}_{R_\epsilon}^2\right)+o(\epsilon) \nonumber
\end{align}
by again splitting the integral over $\Omega_\epsilon$ into integrals over $\Omega_0$ and $R_\epsilon$ and replacing $E_\eps^\mu\phi$ by $\widetilde{E}_\eps^\mu\phi$. Because $\xi_{\mu}^{\phi}$ is a function only of $x$, we find
\begin{align}
    q_{n,n}&=\mu-\lambda^{\epsilon,\downarrow}+\int_0^1\int_{0}^{\epsilon g(x)} \left(\left|\nabla\xi_{\mu}^{\phi}\right|^2-\lambda^\epsilon \left|\xi_{\mu}^{\phi}\right|^2\right) dydx\nonumber+o(\epsilon) \\
    &=\mu-\lambda^{\epsilon,\downarrow}+\epsilon\int_0^1 g(x)\left(\left|\frac{d}{dx}\xi_{\mu}^{\phi}\right|^2-\lambda^\epsilon \left|\xi_{\mu}^{\phi}\right|^2\right) dx\nonumber+o(\epsilon) \\
    &=\mu-\lambda^{\epsilon,\downarrow}+\epsilon\Theta_{\mu}\left(\phi(p_0),\phi(p_1)\right)+o(\epsilon) \nonumber
\end{align}
where $\Theta_\mu$ is given in (\ref{eq:ThetaEq}). Because $\textbf{q}_\eps\geq 0$ and $q_{n,n}=\textbf{q}_\eps+o(\epsilon)$, this implies the following upper bound on the lower branch eigenvalue:
\begin{equation}
    \lambda^{\epsilon,\downarrow}\leq \mu+\epsilon \min\left\{\Theta_{\mu}\left(\phi^e(p_0),\phi^e(p_1)\right), \Theta_{\mu}\left(\phi^o(p_0),\phi^o(p_1)\right)\right\}+o(\epsilon). \nonumber
\end{equation}

\quad We now determine an upper bound for the upper branch eigenvalue $\lambda^{\epsilon,\uparrow}$. Suppose first that $\varphi_n^{\eps}$ is even. Then, let $\psi_{i_1}^{\eps},\ldots,\psi_{i_m}^{\eps}$ with $i_m\leq n-1$ be those $\psi_j^{\eps}$ which are odd with respect to $\{x=\tfrac{1}{2}\}$. By Lemma \ref{lem:SymmetryEigF}, for small $\eps>0$, there are precisely $m$ eigenfunctions of $\Omega_{\eps}$ which are odd with respect to $\{x=\tfrac{1}{2}\}$, with eigenvalues satisfying $\lim_{\eps\to0}\lambda_j^{\eps} < \mu$. We label these eigenfunctions of $\Omega_{\eps}$ as $\varphi_{i_1}^\eps,\dots, \varphi_{i_m}^\eps$. Then, there exists a nonzero function
\begin{equation}
    f\in W'\eqdef\textrm{span}\{\psi_{i_1}^\epsilon,\dots,\psi_{i_m}^\epsilon, \chi^o_\epsilon\} \nonumber
\end{equation}
such that $f$ is orthogonal to these eigenfunctions $\varphi_{i_1}^\eps,\dots, \varphi_{i_m}^\eps$. This function exists because $\dim W'= m+1$. Using the symmetry of the domain, $f$ is also orthogonal to all of the even eigenfunctions solving (\ref{eq:PDE}) in $\Omega_\eps$ and hence $f$ is orthogonal to all of the first $n$ eigenfunctions $\varphi_1^\eps,\dots, \varphi_{n}^\eps$ in $\Omega_{\eps}$. In the same manner as before, the Rayleigh quotient of $f$ provides an upper bound for $\lambda^{\eps,\uparrow}$. 

\quad We construct a matrix $\textbf{Q}_\eps$ of size $(m+1)\times(m+1)$ so that
\begin{align*}
    f=\sum_{j=1}^{m}a_j\psi_{i_j}^{\eps} + a_{m+1}\chi^o_\eps, \qquad \norm{\nabla f}^2-\lambda^{\eps,\uparrow}\norm{f}^2=\textbf{a}^T\textbf{Q}_\eps\textbf{a}\geq0
\end{align*}
where $\textbf{a}^T=(a_1,\dots, a_{m+1})$. By constructing $f$ in this way, we avoid featuring the spectral gap $\lambda^{\eps,\uparrow}-\lambda^{\eps,\downarrow}$ from (\ref{eq:theproblem}) in $\textbf{Q}_\eps$. Rather, the matrix $\textbf{Q}_\eps$ has the same bounds as above on all diagonal and off-diagonal terms (since in particular $\lim_{\eps\to0}\lambda^{\eps,\uparrow}=\mu>\lambda_{i_j}$ for $1\leq j \leq m-1$), except now we obtain
\begin{align*}
   -o(\eps) \leq \ q_{m+1,m+1}= \norm{\nabla \chi^o_{\eps}}^2 - \lambda^{\eps,\uparrow}\norm{\chi^o_\eps}^2 = \mu- \lambda^{\eps,\uparrow} + \eps\Theta_\mu(\phi^o(p_0),\phi^o(p_1)) + o(\eps),
\end{align*}
giving
\begin{align*}
   \lambda^{\eps,\uparrow} \leq \mu+ \eps\Theta_\mu(\phi^o(p_0),\phi^o(p_1)) + o(\eps).
\end{align*}
\quad If instead $\varphi^\eps_n$ is odd, then we obtain an analogous result, replacing $\phi^o$ with $\phi^e$, and so we must have
\begin{equation}
    \lambda^{\epsilon,\uparrow}\leq \mu+\epsilon \max\left\{\Theta_{\mu}\left(\phi^e(p_0),\phi^e(p_1)\right), \Theta_{\mu}\left(\phi^o(p_0),\phi^o(p_1)\right)\right\}+o(\epsilon), \nonumber
\end{equation}
completing the proof.
\end{proof}

\section{An Eigenvalue Lower Bound}\label{sec:Lower}

\quad This section is dedicated to determining lower bounds for the eigenvalue branches $\lambda^{\eps,e}$ and $\lambda^{\eps,o}$ which, when combined with the results of Section \ref{sec:Upper}, provide an exact formula for the first-order eigenvalue variations in Theorem \ref{thm:EigVar}. These bounds are then used to obtain estimates on the corresponding eigenfunctions and prove Theorem \ref{thm:EigFnVar}. To accomplish this, we first present an estimate for the orthogonal projections of the perturbed eigenfunctions over the separated domain $\Omega_0=\Omega_L\cup\Omega_R$. The analysis in this section follows the method of \cite[Section $5$]{A95}, which again considered the case that $\mu$ was a simple Neumann eigenvalue of $\Omega_0 = \Omega_L\cup\Omega_R$.

\begin{prop}\label{prop:BlowItUp}
    Let $\updownarrow$ denote either $\uparrow$ or $\downarrow$. If we define
    \begin{equation}
        f_\epsilon^\updownarrow=\varphi^{\epsilon,\updownarrow}-\left(\varphi^{\epsilon,\updownarrow}, \phi^e\right)_{\Omega_0}\phi^e-\left(\varphi^{\epsilon,\updownarrow},\phi^o\right)_{\Omega_0}\phi^o, \nonumber
    \end{equation}
    then $$\norm{f_\epsilon^{\updownarrow}}_{\Omega_0}^2=o\left(\norm{\nabla f_\epsilon^{\updownarrow}}_{\Omega_0}^2\right).$$
\end{prop}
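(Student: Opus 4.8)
The plan is to show that the component of $\varphi^{\epsilon,\updownarrow}$ orthogonal (in $L^2(\Omega_0)$) to the two limiting eigenfunctions $\phi^e,\phi^o$ is negligible relative to its own Dirichlet energy, which is the key step for closing the gap between the upper bound of Lemma~\ref{lem:LwrBd} and a matching lower bound. First I would set $\delta_\epsilon = \varphi^{\epsilon,\updownarrow} - \phi^{\bullet}$, where $\phi^{\bullet}\in\{\phi^e,\phi^o\}$ is the limiting eigenfunction of the appropriate parity (guaranteed to exist with definite parity by Lemma~\ref{lem:SymmetryEigF}). From \cite[Theorem~2.2]{A95} we know $\varphi^{\epsilon,\updownarrow}\to\phi^{\bullet}$ in $H^1(\Omega_0)$, so $\|\delta_\epsilon\|_{H^1(\Omega_0)}\to 0$, but we need a \emph{quantitative} relation between $\|f_\epsilon^{\updownarrow}\|_{\Omega_0}$ and $\|\nabla f_\epsilon^{\updownarrow}\|_{\Omega_0}$, not just smallness. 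Note that $f_\epsilon^{\updownarrow}$ is obtained from $\varphi^{\epsilon,\updownarrow}$ by subtracting its projections onto $\phi^e,\phi^o$; since $\varphi^{\epsilon,\updownarrow}$ has definite parity and is orthogonal to the opposite-parity limiting eigenfunction, only one of the two projections is nontrivial, and $f_\epsilon^{\updownarrow}$ is exactly the part of $\varphi^{\epsilon,\updownarrow}$ orthogonal to the $\mu$-eigenspace of $\Omega_0$.

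The main step is an energy/spectral-gap argument on $\Omega_0$. I would test the weak formulation of the eigenvalue equation for $\varphi^{\epsilon,\updownarrow}$ against $f_\epsilon^{\updownarrow}$, integrating over $\Omega_0$ only. Because $\p_\nu\varphi^{\epsilon,\updownarrow}$ need not vanish on the segment of $\p\Omega_0$ adjacent to the neck, integration by parts produces a boundary term supported on the two small arcs near $p_0$ and $p_1$ of length $O(\epsilon)$; using the eigenfunction bounds and trace estimates (as in \cite[Section~5]{A95}) this boundary contribution is $o(1)$ times the relevant norms, in fact controllable by $\|f_\epsilon^{\updownarrow}\|_{\Omega_0}$ up to small factors. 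This yields
\begin{equation}
\norm{\nabla f_\epsilon^{\updownarrow}}_{\Omega_0}^2 = \lambda^{\epsilon,\updownarrow}\norm{f_\epsilon^{\updownarrow}}_{\Omega_0}^2 + (\textrm{boundary term}) + (\textrm{cross terms with }\phi^e,\phi^o),\nonumber
\end{equation}
where the cross terms vanish because $f_\epsilon^{\updownarrow}\perp \phi^e,\phi^o$ in $L^2(\Omega_0)$ and $\phi^e,\phi^o$ are eigenfunctions. Separately, since $f_\epsilon^{\updownarrow}$ lies in the orthogonal complement of the $\mu$-eigenspace of $\Omega_0$ and $\mu\notin\{\tau_n\}$, the min–max principle on $\Omega_0$ gives a strict spectral-gap inequality $\norm{\nabla f_\epsilon^{\updownarrow}}_{\Omega_0}^2 \ge \mu'\norm{f_\epsilon^{\updownarrow}}_{\Omega_0}^2$ with $\mu'$ the nearest Neumann eigenvalue of $\Omega_0$ on the far side of $\mu$ within the same parity class; in particular $\mu'\neq\mu$. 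Comparing this with the previous identity and using $\lambda^{\epsilon,\updownarrow}\to\mu\neq\mu'$ forces $\norm{f_\epsilon^{\updownarrow}}_{\Omega_0}$ and $\norm{\nabla f_\epsilon^{\updownarrow}}_{\Omega_0}$ to be comparable, but then the only way the perturbed identity is consistent is if both are driven by the boundary term, giving $\norm{f_\epsilon^{\updownarrow}}_{\Omega_0}^2 = o(\norm{\nabla f_\epsilon^{\updownarrow}}_{\Omega_0}^2)$.

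The main obstacle I anticipate is handling the boundary term carefully enough: one must quantify the normal derivative of $\varphi^{\epsilon,\updownarrow}$ along the flat arcs near $p_0,p_1$, where the eigenfunction transitions into the Sturm–Liouville profile in the neck, and show it contributes at an order strictly smaller than $\norm{\nabla f_\epsilon^{\updownarrow}}_{\Omega_0}^2$. This is exactly the place where the flatness of $\p\Omega_0$ near $p_0,p_1$ (Definition~\ref{defn:dumbbell}) and the $O(\epsilon)$ measure of the neck mouth are used, paralleling \cite[Lemma~3.9]{A95} and the trace/extension estimates of \cite[Section~5]{A95}. A secondary technical point is to confirm that $f_\epsilon^{\updownarrow}\not\equiv 0$ (so the quotient statement is meaningful), which follows since $\varphi^{\epsilon,\updownarrow}$ is $L^2(\Omega_\epsilon)$-normalized with a genuine nonzero part in the neck, so it cannot coincide with a pure $\Omega_0$-eigenfunction.
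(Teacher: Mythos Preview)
Your proposed spectral-gap step is where the argument breaks down. Being $L^2(\Omega_0)$-orthogonal to $\phi^e,\phi^o$ does \emph{not} yield an inequality of the form $\|\nabla f_\epsilon^{\updownarrow}\|_{\Omega_0}^2 \ge \mu'\|f_\epsilon^{\updownarrow}\|_{\Omega_0}^2$ with some $\mu'\neq\mu$: on the orthogonal complement of the $\mu$-eigenspace the Neumann Rayleigh quotient still ranges over all of $[0,\infty)$ (for instance, locally constant functions on $\Omega_0$ lie in this complement whenever $\mu>0$ and have zero Dirichlet energy). Consequently the comparison you describe between the tested identity and a ``spectral-gap inequality'' cannot be made, and the deduction that ``both are driven by the boundary term, giving $\|f_\epsilon^{\updownarrow}\|^2 = o(\|\nabla f_\epsilon^{\updownarrow}\|^2)$'' is not justified. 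More fundamentally, the conclusion is equivalent to saying that the Rayleigh quotient of $f_\epsilon^{\updownarrow}$ on $\Omega_0$ tends to infinity; this is a statement about \emph{frequency concentration} of the correction near the attachment points $p_0,p_1$, and it cannot be extracted from a soft energy identity together with a fixed spectral gap.

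The paper does not give its own proof but defers to \cite[Proposition~5.1]{A95}, whose argument is a contradiction/blow-up (as the label \texttt{BlowItUp} suggests). One assumes $\|f_{\epsilon_n}\|_{\Omega_0}^2 \ge c\,\|\nabla f_{\epsilon_n}\|_{\Omega_0}^2$ along a subsequence, normalizes $g_n = f_{\epsilon_n}/\|\nabla f_{\epsilon_n}\|_{\Omega_0}$ so that $\|\nabla g_n\|_{\Omega_0}=1$ and $\|g_n\|_{\Omega_0}\ge c^{1/2}$, and extracts a weak $H^1(\Omega_0)$ limit $g$ with $\|g\|_{\Omega_0}>0$ and $g\perp\phi^e,\phi^o$. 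The substantive work is to show that $g$ is a weak Neumann eigenfunction on $\Omega_0$ with eigenvalue $\mu$, contradicting the orthogonality. This step genuinely uses the dumbbell geometry --- the interface between $\Omega_0$ and $R_\epsilon$ has length $O(\epsilon)$ --- to show that the residual terms coming from the $\Omega_\epsilon$-equation vanish in the limit after rescaling. Your instinct that the boundary/neck terms are the crux is correct, but the mechanism that forces the high-frequency behaviour is compactness after blow-up, not a static spectral inequality.
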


\quad Notice that the inner products in this proposition are over the separated domain $\Omega_0$, where the eigenfunctions $\phi^e,\phi^o$ are defined. The functions $f_\epsilon^\updownarrow$ are constructed to be orthogonal to the eigenfunctions $\phi^e,\phi^o$ with respect to the $L^2(\Omega_0)$ inner product. Thus, Proposition \ref{prop:BlowItUp} describes the asymptotic behavior of the Dirichlet form of the orthogonal projections of $\varphi^{\eps,e},\varphi^{\eps,o}$ over the eigenbasis $\{\phi^e,\phi^o\}$. The proof of Proposition \ref{prop:BlowItUp} follows directly from \cite[Proposition $5.1$]{A95}.

\quad Hereafter, let $\varphi^\epsilon\in\{\varphi^{\epsilon,e},\varphi^{\epsilon,o}\}$ solve (\ref{eq:PDE}) in $\Omega_\eps$ with eigenvalue $\lambda^\eps\in\{\lambda^{\eps,e},\lambda^{\eps,o}\}$, as the following statements in this section hold regardless of the label. We find that for small enough $\epsilon$, the perturbed eigenfunction $\varphi^{\eps}$ closely resembles the extrusion of a solution to (\ref{eq:OntheInt2}) over the neck $R_\epsilon$. Let $M:H^1(R_\epsilon)\to H^1((0,1))$ be an operator given by
\begin{equation}\label{eq:Avg}
    Mf(x)=\frac{1}{\epsilon g(x)}\int_0^{\epsilon g(x)} f(x,y)dy
\end{equation}
which averages a function in the $y$ direction over $R_\epsilon$. If we let $\xi_\epsilon$ solve the Sturm-Liouville equation (\ref{eq:OntheInt2}) over $(0,1)$ with eigenvalue $\lambda^\epsilon$ and boundary conditions 
\begin{equation}\label{eq:xissss}
        \xi_\epsilon(0)=T_L\varphi^\epsilon, \quad \textrm{and} \quad \xi_\epsilon(1)=T_R\varphi^\epsilon 
\end{equation}
where $T_L, T_R$ average a function at the left and right endpoints of the channel, i.e. $T_Lf=Mf(0)$ and $T_Rf=Mf(1)$ with $M$ as in (\ref{eq:Avg}), then $\xi_\eps$ closely approximates $\varphi^{\eps}$ in $R_\eps$, as made precise in the following statement.

\begin{prop}\label{prop:specgap}
    As in (\ref{eq:EigenvalueList}), let $\{\tau_n\}_{n=1}^\infty$ denote the Dirichlet eigenvalues of (\ref{eq:OntheInt2}). Under Assumption \ref{ass:eig}, there exists a positive constant $C$ such that
    \begin{equation}
        \norm{\varphi^\epsilon-\xi_\epsilon}_{R_\epsilon}^2\leq C\epsilon^2 \norm{\frac{\p\varphi^\epsilon}{\p y}}_{R_\epsilon}^2\left(1+\sum_{i=1}^\infty \frac{\tau_i}{\left(\lambda^\epsilon-\tau_i\right)^2}\right). \nonumber
    \end{equation}
\end{prop}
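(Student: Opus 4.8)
The plan is to split $\varphi^\epsilon-\xi_\epsilon=(\varphi^\epsilon-M\varphi^\epsilon)+(M\varphi^\epsilon-\xi_\epsilon)$ and estimate the two pieces separately: the first will produce the constant ``$1$'' inside the parenthesis and the second the spectral sum. The first piece is immediate: $M\varphi^\epsilon(x)$ is exactly the mean of $\varphi^\epsilon(x,\cdot)$ over the interval $(0,\epsilon g(x))$, whose length is at most $\epsilon$ since $\norm{g}_{L^\infty}\le1$, so applying the one-dimensional Poincar\'e--Wirtinger inequality in the $y$ variable and integrating over $x$ gives
\[
\norm{\varphi^\epsilon-M\varphi^\epsilon}_{R_\epsilon}^2\le\frac{\epsilon^2}{\pi^2}\,\norm{\partial_y\varphi^\epsilon}_{R_\epsilon}^2.
\]
By elliptic regularity $\varphi^\epsilon$ is smooth in the interior of $R_\epsilon$, so the manipulations below are justified.

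For the second piece I would first extract the approximate one-dimensional equation obeyed by $M\varphi^\epsilon$. Inserting into the weak formulation of (\ref{eq:PDE}) on $\Omega_\epsilon$ a test function $\psi=\psi(x)$ depending only on $x$ with $\psi(0)=\psi(1)=0$ (so that its extension by zero to $\Omega_0$ lies in $H^1(\Omega_\epsilon)$) reduces the identity to an integral over $R_\epsilon$ alone, and the Leibniz rule applied to $\frac{d}{dx}\int_0^{\epsilon g(x)}\varphi^\epsilon\,dy=\epsilon\frac{d}{dx}\bigl(g\,M\varphi^\epsilon\bigr)$ rewrites it as
\[
\int_0^1 g\,(M\varphi^\epsilon)'\,\psi'\,dx-\lambda^\epsilon\int_0^1 g\,M\varphi^\epsilon\,\psi\,dx=\int_0^1 g'\,r_\epsilon\,\psi'\,dx,\qquad r_\epsilon(x):=\varphi^\epsilon(x,\epsilon g(x))-M\varphi^\epsilon(x),
\]
the right-hand side being the error coming from the curved top $y=\epsilon g(x)$ of the neck. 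Since by (\ref{eq:xissss}) the function $\xi_\epsilon$ solves $-(g\xi_\epsilon')'=\lambda^\epsilon g\xi_\epsilon$ exactly with $\xi_\epsilon(0)=M\varphi^\epsilon(0)$ and $\xi_\epsilon(1)=M\varphi^\epsilon(1)$, the difference $w_\epsilon:=M\varphi^\epsilon-\xi_\epsilon$ lies in $H_0^1((0,1))$ and satisfies the same weak identity with right-hand side $\int_0^1 g'r_\epsilon\psi'\,dx$.

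Next I would expand $w_\epsilon=\sum_{i\ge1}c_i\gamma_i$ in the Dirichlet Sturm--Liouville eigenfunctions $\gamma_i$ of (\ref{eq:OntheInt2}), normalised in $L^2((0,1),g\,dx)$ and orthogonal in the Dirichlet form with $\int_0^1 g|\gamma_i'|^2\,dx=\tau_i$. Testing the weak identity for $w_\epsilon$ against $\psi=\gamma_i$ gives $(\tau_i-\lambda^\epsilon)c_i=\int_0^1 g'\gamma_i'r_\epsilon\,dx$, which is legitimate because under Assumption \ref{ass:eig} and for $\epsilon$ small, $\lambda^\epsilon$ stays bounded away from every $\tau_i$ (this is also what makes $\xi_\epsilon$ well-defined). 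A weighted Cauchy--Schwarz estimate using $g\ge\inf g>0$ yields $\bigl|\int_0^1 g'\gamma_i'r_\epsilon\,dx\bigr|^2\le \norm{g'}_{L^\infty}^2(\inf g)^{-1}\,\tau_i\,\norm{r_\epsilon}_{L^2(0,1)}^2$, hence
\[
\norm{w_\epsilon}_{L^2((0,1),g\,dx)}^2=\sum_{i\ge1}c_i^2\le \norm{g'}_{L^\infty}^2(\inf g)^{-1}\,\norm{r_\epsilon}_{L^2(0,1)}^2\sum_{i\ge1}\frac{\tau_i}{(\lambda^\epsilon-\tau_i)^2}.
\]
Finally, writing $r_\epsilon(x)$ as a double average of $\int_t^{\epsilon g(x)}\partial_y\varphi^\epsilon\,ds$ over $t\in(0,\epsilon g(x))$ and applying Cauchy--Schwarz gives $|r_\epsilon(x)|^2\le\epsilon\int_0^{\epsilon g(x)}|\partial_y\varphi^\epsilon|^2\,dy$, so $\norm{r_\epsilon}_{L^2(0,1)}^2\le\epsilon\norm{\partial_y\varphi^\epsilon}_{R_\epsilon}^2$. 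Since $\norm{M\varphi^\epsilon-\xi_\epsilon}_{R_\epsilon}^2=\epsilon\norm{w_\epsilon}_{L^2((0,1),g\,dx)}^2$, combining with the first-piece bound gives the stated estimate; the sum $\sum_i\tau_i(\lambda^\epsilon-\tau_i)^{-2}$ converges by the classical Sturm--Liouville eigenvalue asymptotics $\tau_i\asymp i^2$.

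The step I expect to be the main obstacle is obtaining the weak identity for $M\varphi^\epsilon$ together with the control of $r_\epsilon$: the plain vertical average of $\varphi^\epsilon$ does \emph{not} solve a closed Sturm--Liouville equation, because the upper limit $\epsilon g(x)$ varies with $x$ and the Neumann condition on $\partial\Omega_\epsilon$ only removes the vertical flux on $y=0$ and $y=\epsilon g(x)$, not the tangential contribution, so one must carefully isolate this discrepancy as $r_\epsilon$ and show $\norm{r_\epsilon}_{L^2(0,1)}=O(\epsilon^{1/2})\,\norm{\partial_y\varphi^\epsilon}_{R_\epsilon}$. Once the weak identity is in hand, the spectral summation is routine and parallels \cite[Section~5]{A95}; the multiplicity of $\mu$ plays no role in this proposition, since $\varphi^\epsilon$ denotes either of the two branches and the computation is identical for both.
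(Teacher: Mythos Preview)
Your proposal is correct and follows essentially the same route as \cite[Proposition~5.3]{A95}, which the paper cites directly. Both arguments split $\varphi^\epsilon-\xi_\epsilon$ into $(\varphi^\epsilon-M\varphi^\epsilon)+(M\varphi^\epsilon-\xi_\epsilon)$, handle the first piece by Poincar\'e--Wirtinger in $y$, and expand the second in the Dirichlet basis $\{\gamma_i\}$; the only cosmetic difference is that Arrieta writes the error in the Fourier coefficient as the two-dimensional integral $\tfrac{1}{\epsilon}\int_{R_\epsilon} y\,\gamma_i'\,(g'/g)\,\partial_y\varphi^\epsilon$, which after integrating by parts in $y$ becomes exactly your $\int_0^1 g'\gamma_i' r_\epsilon\,dx$.
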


\quad Assumption \ref{ass:eig} that $\mu$ is not a Dirichlet eigenvalue of (\ref{eq:OntheInt2}) keeps the inequality in Proposition \ref{prop:specgap} finite. The estimates of \cite[Theorem $2.2$]{A95} guarantee that $\lambda^\eps-\mu=o(1)$ and hence $\lambda^\epsilon$ is bounded away from $\{\tau_n\}_{n=1}^\infty$ for sufficiently small $\epsilon$. The proof of Proposition \ref{prop:specgap} follows directly from that of \cite[Proposition $5.3$]{A95}. In combination, Propositions \ref{prop:BlowItUp} and \ref{prop:specgap} are sufficient to determine a lower bound for $\lambda^\epsilon$, following the same proof strategy as in case (a) of \cite[Section $5$]{A95}. In contrast to Lemma \ref{lem:LwrBd}, the following statement distinguishes between branches according to eigenfunction symmetry, as established in Lemma \ref{lem:SymmetryEigF}.

\begin{lemma}\label{lem:LowerBd}
    Let $\lambda^{\eps}\in\{\lambda^{\eps,e},\lambda^{\eps,o}\}$ denote either of the eigenvalue branches that approach $\mu$ as $\epsilon\to 0$. For small $\epsilon$,
    \begin{align}
        \lambda^{\eps}\geq \mu+\epsilon\Theta_\mu\left(\phi(p_0),\phi(p_1)\right)+o(\epsilon) \nonumber
    \end{align}
    where $\Theta_\mu$ is given in (\ref{eq:ThetaEq}) and $\phi\in\{\phi^e,\phi^o\}$ matches the index of $\lambda^\eps$.
\end{lemma}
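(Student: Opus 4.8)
The strategy, following \cite[Section~5, case (a)]{A95}, is to combine the Rayleigh identity for $\varphi^\eps$ with Propositions~\ref{prop:BlowItUp} and~\ref{prop:specgap}, reducing the whole estimate to the asymptotics of a one-dimensional quadratic form over the neck. Since $\varphi^\eps$ is an $L^2(\Omega_\eps)$-normalized eigenfunction with eigenvalue $\lambda^\eps$, testing the weak equation against $\varphi^\eps$ gives $\lambda^\eps=\norm{\nabla\varphi^\eps}_{\Omega_0}^2+\norm{\nabla\varphi^\eps}_{R_\eps}^2$ and $1=\norm{\varphi^\eps}_{\Omega_0}^2+\norm{\varphi^\eps}_{R_\eps}^2$. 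On $\Omega_0$ we write $\varphi^\eps=c\phi+f_\eps^{\updownarrow}$, where $c=(\varphi^\eps,\phi)_{\Omega_0}$, $\phi\in\{\phi^e,\phi^o\}$ shares the parity of $\varphi^\eps$ supplied by Lemma~\ref{lem:SymmetryEigF}, and $f_\eps^{\updownarrow}$ is the function from Proposition~\ref{prop:BlowItUp} (the coefficient of the oppositely-symmetric reflection vanishes by parity). Integrating by parts against the Neumann eigenfunction $\phi$ yields $\norm{\nabla\varphi^\eps}_{\Omega_0}^2=\mu c^2+\norm{\nabla f_\eps^{\updownarrow}}_{\Omega_0}^2$, and substituting $c^2=1-\norm{f_\eps^{\updownarrow}}_{\Omega_0}^2-\norm{\varphi^\eps}_{R_\eps}^2$, Proposition~\ref{prop:BlowItUp} gives $\norm{\nabla\varphi^\eps}_{\Omega_0}^2-\mu\norm{\varphi^\eps}_{\Omega_0}^2=\norm{\nabla f_\eps^{\updownarrow}}_{\Omega_0}^2-\mu\norm{f_\eps^{\updownarrow}}_{\Omega_0}^2\ge 0$ for small $\eps$. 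Hence $\lambda^\eps\ge\mu+Q_\eps(\varphi^\eps)$ with $Q_\eps(w)\eqdef\norm{\nabla w}_{R_\eps}^2-\mu\norm{w}_{R_\eps}^2$, and it remains to prove $Q_\eps(\varphi^\eps)\ge\eps\,\Theta_\mu(\phi(p_0),\phi(p_1))+o(\eps)$.

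Next I would extract the a priori smallness of the neck energy: Lemma~\ref{lem:LwrBd} gives $\lambda^\eps-\mu=O(\eps)$, so the inequality just obtained forces $\norm{\nabla\varphi^\eps}_{R_\eps}^2=O(\eps)$, and together with Proposition~\ref{prop:specgap} (whose right-hand side is finite and bounded since $\mu\notin\{\tau_n\}_{n=1}^\infty$ by Assumption~\ref{ass:eig}) and a Poincar\'e inequality in $y$, this also yields $\norm{\varphi^\eps-M\varphi^\eps}_{R_\eps}^2=O(\eps^3)$ and $\norm{\varphi^\eps-\xi_\eps}_{R_\eps}^2=O(\eps^3)$, with $\xi_\eps$ as in \eqref{eq:xissss}. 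I would then split $\norm{\varphi^\eps}_{R_\eps}^2=\norm{M\varphi^\eps}_{R_\eps}^2+\norm{\varphi^\eps-M\varphi^\eps}_{R_\eps}^2$ and $\norm{\nabla\varphi^\eps}_{R_\eps}^2=\norm{\p_x\varphi^\eps}_{R_\eps}^2+\norm{\p_y\varphi^\eps}_{R_\eps}^2$: the $y$-Poincar\'e inequality lets the $\p_y$-energy absorb $\mu\norm{\varphi^\eps-M\varphi^\eps}_{R_\eps}^2$, while Cauchy--Schwarz in $y$ and the Leibniz identity $(M\varphi^\eps)'=M(\p_x\varphi^\eps)-\tfrac{g'}{g}(\varphi^\eps-M\varphi^\eps)(\cdot,\eps g(\cdot))$ give $\norm{\p_x\varphi^\eps}_{R_\eps}^2\ge\eps\int_0^1 g\,\bigl|(M\varphi^\eps)'\bigr|^2-O(\eps^2)$, the correction being controlled by the trace bound $\norm{(\varphi^\eps-M\varphi^\eps)(\cdot,\eps g(\cdot))}_{L^2(0,1)}^2\le\norm{g}_{L^\infty}\eps\,\norm{\p_y\varphi^\eps}_{R_\eps}^2$ and $\norm{(M\varphi^\eps)'}_{L^2(0,1)}=O(1)$. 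This gives $Q_\eps(\varphi^\eps)\ge\eps\,q_\mu(M\varphi^\eps)+o(\eps)$, where $q_\mu(v)\eqdef\int_0^1 g\,|v'|^2-\mu\int_0^1 g\,|v|^2$, so that $q_\mu(\xi_\mu^{a,b})=\Theta_\mu(a,b)$.

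Finally I would compare $M\varphi^\eps$ with $\xi_\mu^{\phi}$ (the solution of \eqref{eq:OntheInt2} with eigenvalue $\mu$ and endpoints $\phi(p_0),\phi(p_1)$). Put $M\varphi^\eps=\xi_\eps+w$. Because $\xi_\eps$ is defined in \eqref{eq:xissss} with endpoint values $\xi_\eps(0)=T_L\varphi^\eps=M\varphi^\eps(0)$ and $\xi_\eps(1)=T_R\varphi^\eps=M\varphi^\eps(1)$, we have $w(0)=w(1)=0$, and the bounds above give $\norm{w}_{L^2(0,1)}^2=O(\eps^2)$. Expanding $q_\mu(\xi_\eps+w)=q_\mu(\xi_\eps)+2\bigl(\int_0^1 g\xi_\eps'w'-\mu\int_0^1 g\xi_\eps w\bigr)+q_\mu(w)$, integrating the cross term by parts using $(g\xi_\eps')'=-\lambda^\eps g\xi_\eps$ and $w(0)=w(1)=0$ collapses it to $(\lambda^\eps-\mu)\int_0^1 g\xi_\eps w=O(\eps^2)$, while $q_\mu(w)\ge-\mu\int_0^1 g\,|w|^2=-O(\eps^2)$. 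Since $\mu\notin\{\tau_n\}_{n=1}^\infty$, the solution of \eqref{eq:OntheInt2} depends continuously in $H^1((0,1))$ on the eigenvalue and endpoint data, and $\lambda^\eps\to\mu$ with $T_L\varphi^\eps\to\phi(p_0)$, $T_R\varphi^\eps\to\phi(p_1)$ — the latter two following from $\varphi^\eps\to\phi$ in $H^1(\Omega_0)$ together with interior regularity near the flat boundary pieces at $p_0,p_1$ — so $\xi_\eps\to\xi_\mu^{\phi}$ in $H^1((0,1))$ and $q_\mu(\xi_\eps)\to q_\mu(\xi_\mu^{\phi})=\Theta_\mu(\phi(p_0),\phi(p_1))$. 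Combining the three displays gives $q_\mu(M\varphi^\eps)\ge\Theta_\mu(\phi(p_0),\phi(p_1))+o(1)$, hence $\lambda^\eps\ge\mu+\eps\,\Theta_\mu(\phi(p_0),\phi(p_1))+o(\eps)$.

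The main obstacle is the neck reduction in the second step: the $y$-average $M$ does not commute with $\p_x$ because $g$ varies, so the $g'$-correction terms and the boundary/trace contributions must be tracked carefully and shown to be $o(\eps)$ rather than merely $O(\eps)$; the essential leverage is the a priori bound $\norm{\nabla\varphi^\eps}_{R_\eps}^2=O(\eps)$ inherited from the upper bound of Lemma~\ref{lem:LwrBd}, which renders all such errors of order $\eps^2$. A secondary subtlety is that $\mu$ may exceed the first Dirichlet eigenvalue of \eqref{eq:OntheInt2}, so $\xi_\mu^{\phi}$ is only a saddle point of $q_\mu$ and the comparison cannot be made variationally; instead one exploits that $M\varphi^\eps$ is already $L^2$-close to the exact Sturm--Liouville solution $\xi_\eps$ that shares its endpoint values, which is precisely what makes the boundary term in the cross-product expansion vanish.
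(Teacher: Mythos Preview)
Your overall strategy matches the paper's (and Arrieta's): decompose on $\Omega_0$ via the projection onto $\phi$, invoke Proposition~\ref{prop:BlowItUp} to make the bulk remainder nonnegative, and reduce the neck contribution to a one-dimensional quadratic form involving $\xi_\eps$. The paper, however, does not redo the neck reduction by hand; it imports the inequality \eqref{eq:startingLower} from \cite[Proposition~5.4]{A95} and the endpoint convergence from \cite[Lemma~3.6]{A95}. You attempt both from scratch, and each attempt has a genuine gap.

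First, your claim that $\norm{\nabla\varphi^\eps}_{R_\eps}^2=O(\eps)$ follows from $\lambda^\eps\geq\mu+Q_\eps(\varphi^\eps)$ and the upper bound $\lambda^\eps-\mu=O(\eps)$ is not justified. These two facts only give $\norm{\nabla\varphi^\eps}_{R_\eps}^2\leq O(\eps)+\mu\norm{\varphi^\eps}_{R_\eps}^2$, and a priori $\norm{\varphi^\eps}_{R_\eps}^2$ is merely $o(1)$ from \cite[Theorem~2.2]{A95}, not $O(\eps)$. Bootstrapping via Proposition~\ref{prop:specgap} or the $y$-Poincar\'e inequality runs into a circular dependence on $\norm{M\varphi^\eps}_{L^2(0,1)}$, which in turn depends on the endpoint values $T_L\varphi^\eps$, $T_R\varphi^\eps$. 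This is precisely why the paper starts from the refined inequality \eqref{eq:startingLower}, in which the positive $C_1\norm{\partial_y\varphi^\eps}_{R_\eps}^2$ term and the explicit $\eps\Theta_{\lambda^\eps}(T_L\varphi^\eps,T_R\varphi^\eps)$ term have already been separated out; all of your subsequent $O(\eps^3)$ claims rest on this unestablished bound.

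Second, your justification for $T_L\varphi^\eps\to\phi(p_0)$ via ``interior regularity near the flat boundary'' does not work as stated. The segment $\{x=0,\,0<y<\eps g(0)\}$ is not a Neumann boundary of $\Omega_\eps$; it is an interior interface with a reentrant-corner feature at $(0,\eps g(0))$, so uniform-in-$\eps$ pointwise control there does not follow from $H^1(\Omega_0)$-convergence plus standard elliptic estimates. The paper avoids this entirely: it argues by contradiction, using \cite[Lemma~3.6]{A95} to show that if the endpoint values failed to converge, the right-hand side of \eqref{eq:GettingThere3} would be bounded below by a multiple of $|\ln\eps|^{-1}$, contradicting the $O(\eps)$ upper bound from Lemma~\ref{lem:LwrBd}. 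That contradiction argument is what simultaneously supplies both the endpoint convergence and (implicitly) the boundedness needed to close the neck estimates.
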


\quad Alongside Lemma \ref{lem:LwrBd}, this result identifies $\Theta_\mu$ in (\ref{eq:ThetaEq}) as the first-order eigenvalue variation and proves the estimates in Theorem \ref{thm:EigVar}. The proof of Lemma \ref{lem:LowerBd} also sets the framework for establishing the eigenfunction estimates in Theorem \ref{thm:EigFnVar}.

\begin{proof}[Proof of Lemma \ref{lem:LowerBd}.] Under Assumption \ref{ass:eig}, \cite[Proposition $5.4$]{A95} states that there exist positive constant $C_1,C_2$ such that 
{\allowdisplaybreaks
\begin{align}\label{eq:startingLower}
    \begin{split}
    \lambda^\epsilon\geq  \norm{\nabla \varphi^\epsilon}_{\Omega_0}^2&+\lambda^\epsilon\norm{\varphi^\epsilon}_{R_\epsilon}^2+C_1\norm{\frac{\p\varphi^\epsilon}{\p y}}_{R_\epsilon}^2+\norm{\frac{\p\varphi^\epsilon}{\p x}-\xi_\epsilon'}_{R_\epsilon}^2  \\ &-\lambda^\epsilon\norm{\varphi^\epsilon-\xi_\epsilon}_{R_\epsilon}^2-C_2\epsilon^3\int_0^1g\left|\xi_\epsilon'\right|^2+\epsilon\Theta_{\lambda^\epsilon}\left(T_L\varphi^\epsilon, T_R\varphi^\epsilon\right). \end{split}
\end{align}
}

We use Proposition \ref{prop:specgap} and the fact that $\norm{\varphi^\epsilon}_{\Omega_\epsilon}^2=1$ to rewrite (\ref{eq:startingLower}) as
\begin{align}
    0\geq  \norm{\nabla \varphi^\epsilon}_{\Omega_0}^2-\lambda^\epsilon\norm{\varphi^\epsilon}_{\Omega_0}^2+C_1\norm{\frac{\p\varphi^\epsilon}{\p y}}_{R_\epsilon}^2+\norm{\frac{\p\varphi^\epsilon}{\p x}-\xi_\epsilon'}_{R_\epsilon}^2-C_2\epsilon^3\int_0^1g\left|\xi_\epsilon'\right|^2+\epsilon\Theta_{\lambda^\epsilon}\left(T_L\varphi^\epsilon, T_R\varphi^\epsilon\right). \nonumber
\end{align}
for some new positive constant $C_1$. For the sake of notation, this can be simplified to
\begin{align}\label{eq:GettingThere}
    0\geq & \norm{\nabla \varphi^\epsilon}_{\Omega_0}^2-\lambda^\epsilon\norm{\varphi^\epsilon}_{\Omega_0}^2-C_2\epsilon^3\int_0^1g\left|\xi_\epsilon'\right|^2+\epsilon\Theta_{\lambda^\epsilon}\left(T_L\varphi^\epsilon, T_R\varphi^\epsilon\right) +R\left(\varphi^\epsilon\right)
\end{align}
where $R(\varphi^\eps)$ denotes a nonnegative remainder term given by
\begin{equation}
    R\left(\varphi^\epsilon\right)\eqdef C_1\norm{\frac{\p\varphi^\epsilon}{\p y}}_{R_\epsilon}^2+\norm{\frac{\p\varphi^\epsilon}{\p x}-\xi_\epsilon'}_{R_\epsilon}^2\geq 0, \nonumber
\end{equation}
where we recall that $\xi_\epsilon$ solves (\ref{eq:OntheInt2}) with eigenvalue $\lambda^\eps$ and boundary conditions (\ref{eq:xissss}).

\quad Let $a=\left(\varphi^\epsilon,\phi^e\right)_{\Omega_0}$ and $b=\left(\varphi^\epsilon,\phi^o\right)_{\Omega_0}$ so that
\begin{align}
    \norm{\nabla \varphi^\epsilon}_{\Omega_0}^2&=\norm{\nabla\varphi^\epsilon-a\nabla\phi^e-b\nabla\phi^o}_{\Omega_0}^2+\mu\left(|a|^2+|b|^2\right), \nonumber \\
\norm{\varphi^\epsilon}_{\Omega_0}^2&=\norm{\varphi^\epsilon- a\phi^e -b\phi^o }_{\Omega_0}^2+\left(|a|^2+|b|^2\right). \nonumber
\end{align}

According to \cite[Theorem $2.2$]{A95}, the coefficients satisfy $|a|^2+|b|^2=1+o(1)$, although we improve this estimate further along in this proof. Proposition \ref{prop:BlowItUp} states that
$$\norm{\varphi^\epsilon-a\phi^e-b\phi^o}_{\Omega_0}^2=o\left(\norm{\nabla\varphi^\epsilon-a\nabla\phi^e-b\nabla\phi^o}_{\Omega_0}^2\right)$$
and so (\ref{eq:GettingThere}) implies 
{\allowdisplaybreaks
\begin{align}\label{eq:GettingThere2}
\begin{split}
    \left(\lambda^\epsilon-\mu\right)(1+o(1))\geq \norm{\nabla\varphi^\epsilon-a\nabla\phi^e-b\nabla\phi^o}_{\Omega_0}^2\left(1+o(1)\right)&-C_2\epsilon^3\int_0^1 g\left|\xi_\epsilon'\right|^2 \\ &+\epsilon\Theta_{\lambda^\epsilon}\left(T_L\varphi^\epsilon, T_R\varphi^\epsilon\right) +R\left(\varphi^\epsilon\right). 
\end{split}
\end{align}}

To collect the nonnegative terms, we set
\begin{equation}\label{eq:Reps}
    \widetilde{R}\left(\varphi^\epsilon\right)\eqdef\norm{\nabla\varphi^\epsilon-a\nabla\phi^e-b\nabla\phi^o}_{\Omega_0}^2\left(1+o(1)\right)+R\left(\varphi^\epsilon\right)\geq 0
\end{equation}
so that (\ref{eq:GettingThere2}) further reduces to
\begin{equation}\label{eq:GettingThere3}
    \left(\lambda^\epsilon-\mu\right)(1+o(1))\geq\epsilon\Theta_{\lambda^\epsilon}\left(T_L\varphi^\epsilon, T_R\varphi^\epsilon\right) -C_2\epsilon^3\int_0^1 g\left|\xi_\epsilon'\right|^2+\widetilde{R}\left(\varphi^\epsilon\right).
\end{equation}

\quad We claim that $T_L\varphi^\epsilon \to a\phi^e(p_0)+b\phi^o(p_0)$ and $T_R\varphi^\epsilon\to a\phi^e(p_1)+b\phi^o(p_1)$ as $\epsilon\to 0$. If this were not the case, then \cite[Lemma $3.6$]{A95} would imply that 
\begin{align*}
\epsilon\Theta_{\lambda^\epsilon}\left(T_L\varphi^\epsilon, T_R\varphi^\epsilon\right) -C_2\epsilon^3\int_0^1 g\left|\xi_\epsilon'\right|^2+\widetilde{R}\left(\varphi^\epsilon\right)
\end{align*}
 is bounded below by a multiple of $\left|\ln\epsilon\right|^{-1}$. Therefore, this would allow us to conclude from (\ref{eq:GettingThere3}) that $\lambda^\epsilon-\mu$ is similarly bounded below by a multiple of $\left|\ln\epsilon\right|^{-1}$. The upper eigenvalue bounds from Lemma \ref{lem:LowerBd} however imply that $\lambda^\epsilon-\mu$ is bounded by a multiple of $\eps$, giving us a contradiction. Therefore, we must have that 
\begin{align}
    T_L\varphi^\epsilon \to a\phi^e(p_0)+b\phi^o(p_0) \nonumber\\ T_R\varphi^\epsilon\to a\phi^e(p_1)+b\phi^o(p_1) \nonumber
\end{align}
as $\epsilon\to 0$. So far, we have not determined $a,b$ beyond the estimate $|a|^2+|b|^2=o(1)$, but the symmetry of $\Omega_\epsilon$ means that only one can be nonzero, as in Lemma \ref{lem:SymmetryEigF}. More precisely, taking $\varphi^\epsilon=\varphi^{\epsilon,e}$ sets $b=0$ while taking $\varphi^\epsilon=\varphi^{\epsilon,o}$ sets $a=0$. Separating the eigenvalue branches according to symmetry, equation (\ref{eq:GettingThere3}) therefore provides the lower bounds
{\allowdisplaybreaks
\begin{align}\label{eq:lowerbd}
    \begin{split}
    \lambda^{\epsilon,e}&\geq\mu+\epsilon\Theta_{\mu}\left(\phi^e(p_0),\phi^e(p_1)\right)+\widetilde{R}\left(\varphi^{\epsilon,e}\right)+o(\epsilon), \\ 
    \lambda^{\epsilon,o}&\geq\mu+\epsilon\Theta_{\mu}\left(\phi^o(p_0),\phi^o(p_1)\right)+\widetilde{R}\left(\varphi^{\epsilon,0}\right)+o(\epsilon). 
    \end{split}
\end{align}}

\quad Combined with the eigenvalue upper bounds from Section \ref{sec:Upper}, the inequalities in (\ref{eq:lowerbd}) determine the leading-order eigenvalue variations and finish the proof of Theorem \ref{thm:EigVar}. Further, they imply that the nonnegative remainders must satisfy
\begin{equation}\label{eq:smallR}
    \widetilde{R}\left(\varphi^{\epsilon,e}\right)=o(\epsilon) \quad \textrm{and} \quad \widetilde{R}\left(\varphi^{\epsilon,o}\right)=o(\epsilon),
\end{equation}
which we can use to approximate the eigenfunctions $\varphi^{\eps,e}$, $\varphi^{\eps,o}$ and prove Theorem \ref{thm:EigFnVar}, once again following \cite[Section $5$]{A95}.
\end{proof}

\quad The eigenfunction estimates over the neck $R_\epsilon$ in Theorem \ref{thm:EigFnVar} follow from Proposition \ref{prop:specgap}, (\ref{eq:smallR}), and \cite[Lemma $3.1$]{A95}, which shows that solutions of \eqref{eq:OntheInt2} are continuous with respect to their boundary values. Namely, because the area of $R_\epsilon$ is $O(\epsilon)$,
\begin{equation}
    \norm{\varphi^{\epsilon,e}-\xi_{\mu}^{\phi^e}}_{H^1(R_\epsilon)}^2\leq 2\norm{\varphi^{\epsilon,e}-\xi_\epsilon^e}_{H^1(R_\epsilon)}^2+2\norm{\xi_\epsilon^e-\xi_\mu^{\phi^e}}_{H^1(R_\epsilon)}^2=o(\epsilon) \nonumber
\end{equation}
where $\xi_\mu^{\phi^e}$ solves (\ref{eq:OntheInt2}) with eigenvalue $\mu$ and boundary conditions given by $\xi_\mu^{\phi^e}(0)=\phi^e(p_0)$ and $\xi_\mu^{\phi^e}(1)=\phi^e(p_1)$. Meanwhile, because all of the terms in (\ref{eq:Reps}) are individually nonnegative, (\ref{eq:smallR}) implies
\begin{equation}
    \norm{\nabla\left(\varphi^{\epsilon,e}-\left(\varphi^{\epsilon,e},\phi^e\right)_{\Omega_0}\phi^e\right)}_{\Omega_0}^2=o(\eps),%+\norm{\nabla \left(\varphi^{\epsilon,o}-\left(\varphi^{\epsilon,o},\phi^o\right)_{\Omega_0}\phi^o\right)}_{\Omega_0}^2=o(\epsilon) \nonumber
\end{equation}
and hence, alongside Proposition \ref{prop:BlowItUp},
\begin{equation}\label{eq:closeH1}
    \norm{\varphi^{\epsilon,e}-\left(\varphi^{\epsilon,e},\phi^e\right)_{\Omega_0}\phi^e}_{H^1(\Omega_0)}^2=o(\eps).%+\norm{\varphi^{\epsilon,o}-\left(\varphi^{\epsilon,o},\phi^o\right)_{\Omega_0}\phi^o}_{H^1(\Omega_0)}^2=o(\epsilon).
\end{equation}

\quad To conclude the proof of Theorem \ref{thm:EigFnVar}, we demonstrate how to use this to establish eigenfunction estimates over $\Omega_0$. Using the triangle inequality, (\ref{eq:closeH1}) provides the existence of some constant $C$, that may change line to line, such that 
\begin{align}\label{eq:wrapITup}
    \norm{\varphi^{\epsilon,e}-\phi^e}_{H^1(\Omega_0)}^2\leq C\left(1-\left(\varphi^{\epsilon,e},\phi^e\right)\right)^2+o(\epsilon).
\end{align}
Because $\norm{\varphi^{\epsilon,e}}_{\Omega_\epsilon}^2=1$ and $(\varphi^{\eps,e},\phi^e)=1+o(1)$, we have
\begin{align}
    1-\left(\varphi^{\epsilon,e},\phi^e\right)\leq C\left(1-\left(\varphi^{\epsilon,e},\phi^e\right)^2\right)&=C\left(\norm{\varphi^{\epsilon,e}-\left(\varphi^{\epsilon,e},\phi^e\right)\phi^e}_{\Omega_0}^2+\norm{\varphi^{\epsilon,e}}_{R_\epsilon}^2\right) \nonumber \\ &\leq C\left(\norm{\varphi^{\epsilon,e}-\xi_{\mu}^{\phi^e}}_{R_\epsilon}^2+\norm{\xi_{\mu}^{\phi^e}}_{R_\epsilon}^2\right)+o(\epsilon). \nonumber
\end{align}
The previously-established eigenfunction estimate over $R_\epsilon$ and the fact that the channel is of size $O(\epsilon)$ therefore imply that $\left(\varphi^{\epsilon,e},\phi^e\right)=1+O(\epsilon)$ and hence (\ref{eq:wrapITup}) provides the desired even eigenfunction estimate over $\Omega_0$. The same analysis holds for $\varphi^{\epsilon,o}$, completing the proof of Theorem \ref{thm:EigFnVar}.

%%%%%%%%%%%%%%%%%%%%%%%%%%%%%%%%%%%%%%%%%%%%%%%%%%%%%%%%%%%
\section{Eigenvalue Branch Order}\label{sec:NDoriginal}

\quad In this section, we determine the ordering of the eigenvalue branches $\lambda^{\eps,e},\lambda^{\eps,o}$ for small $\epsilon$ by establishing a connection between the first-order eigenvalue variation featured in Theorem \ref{thm:EigVar} and the number of zeros produced by the Sturm-Liouville eigenfunctions $\xi_\mu^{\phi^e}$, $\xi_\mu^{\phi^o}$ featured in Theorem \ref{thm:EigFnVar}. The number of eigenfunction zeros is dependent on the eigenvalue $\mu$ and can be quantified by a comparison of $\mu$ with the Dirichlet eigenvalues of (\ref{eq:OntheInt2}). These results are made precise in Proposition \ref{prop:index-main}. From now on we work under both Assumptions \ref{ass:eig} and \ref{ass:phi}, which in particular ensures that $\mu$ is not a Dirichlet eigenvalue of \eqref{eq:OntheInt2} and that the Sturm-Liouville eigenfunctions $\xi_\mu^{\phi^e}$, $\xi_\mu^{\phi^o}$  are not identically zero.

\quad For the sake of notation, set $\xi^e=\xi_\mu^{\phi^e}$ and $\xi^o=\xi_\mu^{\phi^o}$ as eigenfunctions solving the differential equation in (\ref{eq:OntheInt2}) with eigenvalue $\mu$. Note that $\xi^e$ is even and $\xi^o$ is odd, satisfying the boundary conditions
\begin{align}
    \xi^e(0)&=\xi^e(1)=\phi^e(p_0), \nonumber \\
    \xi^o(0)&=-\xi^o(1)=\phi^o(p_0). \nonumber
\end{align}
By symmetry, integration by parts, and the equation satisfied by $\xi^e$ and $\xi^o$, we can express the eigenvalue variation (\ref{eq:ThetaEq}) in terms of eigenfunction behavior at either endpoint of $(0,1)$. For consistency, we write
\begin{equation}\label{eq:Theta1and2}
    \Theta_{\mu}\left(\phi^e(p_0),\phi^e(p_1)\right)=-2g(0)\xi^e(0){\xi^{e}}'(0) \quad \textrm{and} \quad \Theta_{\mu}\left(\phi^o(p_0),\phi^o(p_1)\right)=-2g(0)\xi^o(0){\xi^{o}}'(0).
\end{equation}
Note that when $\mu=0$, the even eigenfunction $\xi^e$ is the constant function and $\Theta_0\left(\phi^e(p_0),\phi^e(p_1)\right)=0$. Meanwhile, we can express the odd eigenfunction $\xi^o$ as a multiple of
\begin{equation}\label{eq:oddxie}
    \int_{\frac{1}{2}}^x\frac{1}{g(t)}dt,
\end{equation}
which implies that $\Theta_0\left(\phi^o(p_0),\phi^o(p_1)\right)$ is positive and the spectral flow bifurcates upon this domain perturbation. For larger eigenvalues, we find that this splitting of eigenvalue branches remains.

\begin{rem}\label{rem:DifferingSigns}
    When $g\equiv 1$, the eigenfunctions $\xi^e,\xi^o$ are explicit for all $\mu$. Under our assumptions and according to the simplified expressions in (\ref{eq:Theta1and2}), we find that, for this particular boundary function, $\Theta_\mu\left(\phi^e(p_0),\phi^e(p_1)\right)$ and $\Theta_\mu\left(\phi^o(p_0),\phi^o(p_1)\right)$ differ in sign whenever $\mu$ is positive.
\end{rem}

\quad It is known \cite[Chapter $8$]{CL55} that the zeros of $\xi^e,\xi^o$ interlace, and by symmetry, the number of their zeros must differ by exactly one. Further, because $\xi^e$ and $\xi^o$ satisfy a non-degenerate Sturm-Liouville equation, their zeros are simple and ${\xi^e}'$, ${\xi^o}'$ are non-vanishing at the zeros. Therefore, since $\xi^e(0)$, $\xi^e(1)$ have the same sign, $\xi^e$ must have an even number of zeros, while analogously $\xi^o$ must have an odd number of zeros. We hereafter denote $N_e,N_o$ as the number of zeros of $\xi^e,\xi^o$ in $(0,1)$ respectively.

\begin{lemma}\label{lem:NumEvOd}
    Let $k$ be the largest integer such that $\tau_k<\mu$, with $k=0$ if $0\leq \mu<\tau_1$, where $\{\tau_n\}_{n=1}^\infty$ denotes the Dirichlet eigenvalues of (\ref{eq:OntheInt2}). Then
    \begin{equation}
        N_e=\begin{cases} k, & k \textrm{ even} \\ k+1, & k \textrm{ odd}\end{cases} \quad \quad \textrm{and} \quad \quad N_o=\begin{cases} k+1, & k \textrm{ even} \\ k, & k \textrm{ odd}.\end{cases} \nonumber
    \end{equation}.
\end{lemma}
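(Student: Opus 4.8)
The plan is to reduce the whole question to the half-interval $[\tfrac12,1]$ using the symmetry $g(x)=g(1-x)$, and then to quote one-dimensional Sturm oscillation theory. First I would record the consequences of symmetry. Since $x\mapsto 1-x$ preserves solutions of \eqref{eq:OntheInt2} with eigenvalue $\mu$, and $\xi^e,\xi^o$ are respectively even and odd about $x=\tfrac12$, we get ${\xi^e}'(\tfrac12)=0$ and $\xi^o(\tfrac12)=0$. Because a nonzero solution of a second-order linear ODE cannot have a double zero, $\xi^e(\tfrac12)\neq 0$, and the interior zeros of $\xi^e$ and of $\xi^o$ away from $\tfrac12$ occur in symmetric pairs $\{x,1-x\}$. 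Hence, writing $n_e$ and $n_o$ for the number of zeros of $\xi^e$ and $\xi^o$ in the open half-interval $(\tfrac12,1)$, we have $N_e=2n_e$ and $N_o=2n_o+1$, and it remains to compute $n_e$ and $n_o$.

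Next I would view $\xi^e|_{[1/2,1]}$ and $\xi^o|_{[1/2,1]}$ as solutions of \eqref{eq:OntheInt2} on $[\tfrac12,1]$ with eigenvalue $\mu$, subject at the left endpoint $x=\tfrac12$ to a Neumann and a Dirichlet condition respectively; at $x=1$ neither vanishes, since $\xi^e(1)=\phi^e(p_1)=\phi(p_0)\neq0$ and $\xi^o(1)=-\phi^o(p_0)=-\phi(p_0)\neq0$ by Assumption \ref{ass:phi}. By the classical oscillation theorem for regular Sturm–Liouville problems \cite[Chapter 8]{CL55}, for $\mu$ not in the relevant spectrum the number of interior zeros in $(\tfrac12,1)$ of such a solution with a fixed condition at $\tfrac12$ equals the number of eigenvalues strictly below $\mu$ of the self-adjoint problem on $[\tfrac12,1]$ that imposes that same condition at $\tfrac12$ together with the Dirichlet condition at $1$. (Assumption \ref{ass:eig} guarantees $\mu$ avoids both mixed spectra, since both turn out to be subsets of $\{\tau_n\}$.) Thus $n_e$ is the number of Neumann–Dirichlet eigenvalues on $[\tfrac12,1]$ below $\mu$, and $n_o$ is the number of Dirichlet–Dirichlet eigenvalues on $[\tfrac12,1]$ below $\mu$.

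The key step — and the one that takes the most care — is identifying these two mixed spectra with the even- and odd-indexed subsequences of $\{\tau_n\}$. Here I would use the even/odd decomposition about $x=\tfrac12$: each Dirichlet eigenfunction $\gamma_n$ of \eqref{eq:OntheInt2} on $(0,1)$ is, by simplicity of the Dirichlet spectrum, either even or odd about $\tfrac12$, and counting its $n-1$ interior zeros (no zero at $\tfrac12$ in the even case, a forced zero at $\tfrac12$ in the odd case) shows it is even exactly when $n$ is odd and odd exactly when $n$ is even. Restriction to $[\tfrac12,1]$ sends an odd $\gamma_n$ (so $n$ even) to a Dirichlet–Dirichlet eigenfunction on $[\tfrac12,1]$ and an even $\gamma_n$ (so $n$ odd) to a Neumann–Dirichlet eigenfunction; conversely, the odd, respectively even, $C^1$-extension across $\tfrac12$ of any such mixed eigenfunction is again a solution of \eqref{eq:OntheInt2} on $(0,1)$ (the matching of value and of $g$ times derivative at $\tfrac12$ holds precisely because of the Dirichlet, respectively Neumann, condition, and $g$ is symmetric), hence a Dirichlet eigenfunction on $(0,1)$. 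This yields an eigenvalue-preserving bijection in each case, so the $j$-th Dirichlet–Dirichlet eigenvalue on $[\tfrac12,1]$ is $\tau_{2j}$ and the $j$-th Neumann–Dirichlet eigenvalue is $\tau_{2j-1}$.

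Finally, with $k=\max\{n:\tau_n<\mu\}$ (and $k=0$ when no such $n$ exists, so that $\{n:\tau_n<\mu\}=\{1,\dots,k\}$), the previous two steps give $n_o=\#\{j:\tau_{2j}<\mu\}=\lfloor k/2\rfloor$ and $n_e=\#\{j:\tau_{2j-1}<\mu\}=\lceil k/2\rceil$, whence $N_o=2\lfloor k/2\rfloor+1$ and $N_e=2\lceil k/2\rceil$; distinguishing $k$ even from $k$ odd reproduces exactly the stated formulas. I expect the third step (the identification of the mixed spectra and the attendant parity bookkeeping) to be the main obstacle, as everything else is a direct appeal to standard oscillation theory; alternatively, one could replace that step by a homotopy-in-$\mu$ argument that tracks zeros of $\xi^e$ and $\xi^o$ as they enter from the endpoints, which occurs precisely as $\mu$ crosses a $\tau_n$ of the corresponding parity.
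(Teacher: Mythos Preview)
Your proposal is correct, but it takes a genuinely different route from the paper. The paper's proof is a direct two-line application of the Sturm Comparison Theorem on the full interval $(0,1)$: comparing $\xi^e,\xi^o$ with the Dirichlet eigenfunction $\gamma_{k+1}$ gives $N_e-1,N_o-1\leq k$ (since $\gamma_{k+1}$ must have a zero in each interior nodal domain of $\xi^e,\xi^o$), while comparing with $\gamma_k$ gives $N_e,N_o\geq k$; the parity constraint then pins down the values. Your approach instead halves the interval via the symmetry, casts $n_e,n_o$ as eigenvalue counts for the Neumann--Dirichlet and Dirichlet--Dirichlet problems on $[\tfrac12,1]$, and then identifies those mixed spectra with $\{\tau_{2j-1}\}$ and $\{\tau_{2j}\}$ through the even/odd decomposition of $\{\gamma_n\}$. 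What your argument buys is a structural explanation of \emph{why} the parity of $k$ matters---it is literally the parity of the last $\tau$ crossed---and it makes explicit the link between the half-interval spectra and the full Dirichlet spectrum, which could be useful elsewhere. What the paper's argument buys is brevity: once one observes that $N_e$ is even and $N_o$ is odd (stated just before the lemma), a single sandwich $k\leq N_e,N_o\leq k+1$ finishes the proof without any reduction to the half-interval or identification of auxiliary spectra.
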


\quad By comparing the eigenvalue $\mu$ with the Dirichlet spectrum, this result thus provides an exact count for the number of nodal domains produced by the eigenfunctions $\xi^e,\xi^o$ on the interval $(0,1)$. Recall that under Assumption \ref{ass:eig}, $\mu$ does not intersect $\{\tau_n\}_{n=1}^\infty$. As a special case, if $\mu=0$, then $\xi^e$ is the constant function with empty nodal set, while from (\ref{eq:oddxie}), the function $\xi^o$ has a zero at $x=\tfrac{1}{2}$. Because the boundary function $g$ is positive and bounded below, this is the only zero of $\xi^o$ in $(0,1)$ when $\mu=0$.

\begin{proof}[Proof of Lemma \ref{lem:NumEvOd}.]
    Let $\gamma_n$ be a Dirichlet eigenfunction of (\ref{eq:OntheInt2}) with eigenvalue $\tau_n$. Then $\gamma_{k}$ and $\gamma_{k+1}$ have $k-1$ and $k$ interior zeros in $(0,1)$, dividing $[0,1]$ into $k$ and $k+1$ nodal domains respectively. 
    
    \quad First, suppose $\mu<\tau_{k+1}$ for some $k\geq 0$. Applying the Sturm Comparison Theorem \cite{CL55} to $\xi^e,\xi^o$ with $\gamma_{k+1}$ implies that $\gamma_{k+1}$ has a zero in every interior nodal domain of $\xi^e,\xi^o$. This does not include the intervals on each end of $(0,1)$ as $\xi^e,\xi^o$ do not vanish at the endpoints. Thus,
    \begin{equation}
        N_e-1, N_o-1\leq k. \nonumber
    \end{equation}
    If $0\leq \mu<\tau_1$, so that $k=0$, this immediately implies that $N_e=0$ and $N_o=1$ because both quantities are nonnegative while $N_e$ is even and $N_0$ is odd. Meanwhile, if $\tau_k<\mu$ for some $k\geq 1$, applying the Sturm Comparison Theorem to $\xi^e,\xi^o$ with $\gamma_{k}$ implies that both $\xi^e$ and $\xi^o$ have a zero in every nodal domain of $\gamma_k$. Thus,
    \begin{equation}
        N_e, N_o\geq k. \nonumber
    \end{equation}
    Again using the fact that $N_e$ is even and $N_o$ is odd, this produces the desired result.
\end{proof}

\quad Lemma \ref{lem:NumEvOd} illustrates how the nodal sets of $\xi^e,\xi^o$ are governed by the eigenvalue $\mu$. To determine the splitting of the eigenvalue branches $\lambda^{\eps,e},\lambda^{\eps,o}$ originating from $\mu$, we need to consider the variation given by $\Theta_\mu$ in (\ref{eq:ThetaEq}).

\begin{lemma}\label{lem:ZeroCount}
    $N_e>N_o$ if and only if $\Theta_{\mu}\left(\phi^e(p_0),\phi^e(p_1)\right)>\Theta_{\mu}\left(\phi^o(p_0),\phi^o(p_1)\right)$.
\end{lemma}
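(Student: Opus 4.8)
The plan is to reduce the inequality to a statement purely about the one-dimensional Sturm--Liouville eigenfunctions $\xi^e,\xi^o$ and then apply an oscillation/comparison argument. By the simplified expressions in \eqref{eq:Theta1and2}, we have $\Theta_\mu(\phi^e(p_0),\phi^e(p_1)) = -2g(0)\xi^e(0){\xi^e}'(0)$ and similarly for $\xi^o$, so the claim is equivalent to
$$
N_e > N_o \quad\Longleftrightarrow\quad \xi^e(0){\xi^e}'(0) < \xi^o(0){\xi^o}'(0).
$$
First I would normalize: replacing $\xi^e$ by $-\xi^e$ if necessary (which changes neither $N_e$ nor the sign of the product $\xi^e(0){\xi^e}'(0)$), assume $\xi^e(0)>0$, and likewise $\xi^o(0)>0$. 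Then the statement becomes a comparison between the signs of ${\xi^e}'(0)$ and ${\xi^o}'(0)$, weighted by which one is larger. Since $\xi^e$ has an even number $N_e$ of interior zeros and does not vanish at the endpoints with $\xi^e(0),\xi^e(1)$ of the same sign, and $\xi^o$ has an odd number $N_o$ of interior zeros with $\xi^o(0),\xi^o(1)$ of opposite signs, the sign of the derivative at $x=0$ is tied to the parity of the number of zeros to the \emph{immediate right of the first zero} — essentially the whole question is about which of $\xi^e,\xi^o$ ``starts decreasing'' versus ``starts increasing'' from its positive value at $0$.

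The key step is a Wronskian/comparison argument. I would consider the Wronskian-type quantity $W(x) = g(x)\big(\xi^e(x){\xi^o}'(x) - \xi^o(x){\xi^e}'(x)\big)$. Since $\xi^e$ and $\xi^o$ both solve \eqref{eq:OntheInt2} with the \emph{same} eigenvalue $\mu$, a direct computation gives $W'(x) = 0$, so $W$ is constant on $(0,1)$. Evaluating at $x=0$: $W(0) = g(0)\big(\xi^e(0){\xi^o}'(0) - \xi^o(0){\xi^e}'(0)\big)$. Evaluating at $x=1$ and using the symmetry $\xi^e(1)=\xi^e(0)$, ${\xi^e}'(1) = -{\xi^e}'(0)$, $\xi^o(1) = -\xi^o(0)$, ${\xi^o}'(1) = {\xi^o}'(0)$ (from $g(x)=g(1-x)$ and the even/odd nature), one finds $W(1) = g(1)\big(\xi^e(0){\xi^o}'(0) - \xi^o(0){\xi^e}'(0)\big) = W(0)$, consistent with constancy but not yet decisive. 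Instead, the productive route is to relate the constant $W$ to the zero counts: because the zeros of $\xi^e$ and $\xi^o$ strictly interlace (as noted in the excerpt, citing \cite[Chapter 8]{CL55}), $W$ has a definite sign, and that sign is exactly what distinguishes the two cases in Lemma \ref{lem:NumEvOd}. Concretely, by Lemma \ref{lem:NumEvOd}, $N_e > N_o$ happens precisely when $k$ is odd (so $N_e = k+1 > k = N_o$), and $N_e < N_o$ when $k$ is even; the parity of $k$ governs whether $\xi^e$ or $\xi^o$ has the ``extra'' zero, hence the ordering of the first zeros of $\xi^e$ and $\xi^o$ in $(0,1/2]$.

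Putting it together: I would show that $\operatorname{sgn}\big(\xi^e(0){\xi^e}'(0) - \xi^o(0){\xi^o}'(0)\big)$ equals the sign of $W(0)\cdot(\text{something positive})$ by a more careful local analysis near $x=0$, or — cleaner — argue directly. Normalize $\xi^e(0)=\xi^o(0)=c>0$. Then $\xi^e - \xi^o$ vanishes at $x=0$, and $(\xi^e - \xi^o)$ satisfies the same equation, so its behavior is controlled by $({\xi^e}'-{\xi^o}')(0)$; tracking the first zero of each of $\xi^e,\xi^o$ and using Sturm comparison between them (they oscillate at the same rate since the eigenvalue is the same, but the boundary slopes differ), one sees that the one with more zeros must have the more negative derivative at $0$ relative to its value — precisely, more interior zeros forces ${\xi}'(0)/\xi(0)$ smaller. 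Since $\Theta_\mu = -2g(0)\,\xi(0){\xi}'(0) = -2g(0)\,c^2\cdot\big({\xi}'(0)/\xi(0)\big)$ with $g(0),c^2>0$, a smaller value of ${\xi}'(0)/\xi(0)$ gives a \emph{larger} $\Theta_\mu$. Hence $N_e > N_o$ forces ${\xi^e}'(0)/\xi^e(0) < {\xi^o}'(0)/\xi^o(0)$, i.e. $\Theta_\mu(\phi^e(p_0),\phi^e(p_1)) > \Theta_\mu(\phi^o(p_0),\phi^o(p_1))$, and the converse follows by the same dichotomy (both quantities being determined by the parity of $k$, with no intermediate case under Assumptions \ref{ass:eig} and \ref{ass:phi}).

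The main obstacle I anticipate is making the informal statement ``more interior zeros $\Rightarrow$ smaller initial logarithmic derivative'' fully rigorous: this requires either a clean Prüfer-angle argument (writing $\xi = \rho\sin\theta$, $g\xi' = \rho\cos\theta$ with the Prüfer angle $\theta(x)$ monotonically increasing through multiples of $\pi$ at each zero, so that the number of interior zeros is $\lfloor(\theta(1)-\theta(0))/\pi\rfloor$-type count, and the initial value $\theta(0) = \operatorname{arccot}(g(0){\xi}'(0)/\xi(0))$ is a decreasing function of ${\xi}'(0)/\xi(0)$), or a direct Sturm-comparison between $\xi^e$ and $\xi^o$ exploiting that they solve the \emph{identical} ODE and differ only in initial slope. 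The Prüfer route is the safest and I would carry the final step out that way, combining it with the symmetry constraints ($\xi^e$ even $\Rightarrow$ $\theta(1) = \pi - \theta(0) \pmod{\pi}$ appropriately; $\xi^o$ odd $\Rightarrow$ $\xi^o(1/2)=0$) to pin down $N_e,N_o$ in terms of $\theta(0)$ and thereby close the equivalence.
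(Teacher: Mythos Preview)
Your approach is essentially correct and shares the paper's main tool---the constancy of the weighted Wronskian $g(x)W(\xi^e,\xi^o)(x)$---but you make the closing step harder than necessary. You correctly observe that, after normalizing $\xi^e(0)=\xi^o(0)>0$ (which is in fact automatic since $\phi^e|_{\Omega_L}=\phi^o|_{\Omega_L}$ gives $\phi^e(p_0)=\phi^o(p_0)$; only the positivity is a choice of sign), the constant $gW$ encodes the difference $\Theta_\mu(\phi^e(p_0),\phi^e(p_1))-\Theta_\mu(\phi^o(p_0),\phi^o(p_1))$, and you even note that interlacing of zeros should determine its sign. But you then retreat to a Pr\"ufer-angle argument to establish ``more zeros $\Rightarrow$ smaller ${\xi}'(0)/\xi(0)$,'' which you yourself flag as the main obstacle. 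The paper executes your first instinct directly: since $gW$ is constant, evaluate it at the \emph{smallest} interior zero $x_0$ of the pair $\xi^e,\xi^o$. By interlacing, the function with more zeros is precisely the one vanishing at $x_0$; if that is $\xi^e$, then $W(x_0)=-\xi^o(x_0){\xi^e}'(x_0)>0$ since $\xi^o(x_0)>0$ and ${\xi^e}'(x_0)<0$, while if $\xi^o(x_0)=0$ the same reasoning gives $W(x_0)<0$. This reads off the sign of the $\Theta_\mu$ difference in two lines, with no Pr\"ufer machinery. Your proposed route would also work once made rigorous, but it is the long way around the same Wronskian identity.
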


\quad In combination with Lemma \ref{lem:NumEvOd}, this statement allows us to distinguish between the upper and lower eigenvalue branches using only knowledge of the eigenvalue $\mu$ and in a manner consistent with Remark \ref{rem:DifferingSigns}. In Section \ref{sec:NDperturbed}, we make use of Theorem \ref{thm:EigFnVar} to compare the number of nodal domains produced respectively by $\varphi^{\eps,e},\varphi^{\eps,o}$ in $R_\epsilon$ with $N_e, N_o$, extending Lemma \ref{lem:ZeroCount} to a statement about nodal domains in $\Omega_\epsilon$.

\begin{rem}\label{rem:BananaSplit}
    Because $N_e$ and $N_o$ differ in parity, Lemma \ref{lem:ZeroCount} implies that at most one of $\Theta_\mu\left(\phi^e(p_0),\phi^e(p_1)\right)$ and $\Theta_\mu\left(\phi^o(p_0),\phi^o(p_1)\right)$ can be zero. This occurs precisely when $\mu$ is a Neumann eigenvalue of \eqref{eq:OntheInt2}. 
\end{rem}

\begin{proof}[Proof of Lemma \ref{lem:ZeroCount}.]
    Without loss of generality, suppose that $\xi^e(0)=\xi^o(0)$ is positive. Between $\xi^e,\xi^o$, the function with the most zeros in the interval is precisely the one that has a zero closest to the origin. Let $x_0\in(0,1)$ denote the smallest zero of $\xi^e,\xi^o$ so that both functions are positive on $[0,x_0)$. Given that each function solves the Sturm-Liouville differential equation, we can write
    \begin{equation}
        -\left(g{\xi^{e}} '\right)'\xi^o=\mu g\xi^e\xi^o \quad \textrm{and} \quad -\left(g{\xi^o}'\right)'\xi^e=\mu g\xi^o\xi^e. \nonumber
    \end{equation}
    Using integration by parts over $(0,x_0)$ yields:
    \begin{align}\label{eq:IntbyParts}
        W\left(\xi^e,\xi^o\right)(x_0)=\frac{1}{2g(x_0)}\left(\Theta_{\mu}\left(\phi^e(p_0),\phi^e(p_1)\right)-\Theta_{\mu}\left(\phi^o(p_0),\phi^o(p_1)\right)\right)
    \end{align}
    where $W\left(\xi^e,\xi^o\right)(x_0)$ denotes the Wronskian of $\xi^e,\xi^o$ at $x_0$. If $\xi^e(x_0)=0$, then ${\xi^e}'(x_0)<0$ and $W\left(\xi^e,\xi^o\right)(x_0)$ is positive. Alternatively, if $\xi^o(x_0)=0$, then ${\xi^o}'(x_0)<0$ and $W\left(\xi^e,\xi^o\right)(x_0)$ is negative, meaning
    \begin{equation}
        N_e>N_o \quad \textrm{if and only if} \quad \xi^e(x_0)=0 \quad \textrm{if and only if} \quad W\left(\xi^e,\xi^o\right)(x_0)>0. \nonumber
    \end{equation}
    Because $g$ is positive, (\ref{eq:IntbyParts}) concludes the proof.
\end{proof}

\quad Importantly, Assumptions \ref{ass:eig} and \ref{ass:phi} are sufficient in guaranteeing that the first-order eigenvalue variations (\ref{eq:ThetaEq}) for $\lambda^{\eps,e},\lambda^{\eps,o}$ are not equal. In combination, Lemmas \ref{lem:NumEvOd} and \ref{lem:ZeroCount} immediately provide the following result, which highlights how the position of the eigenvalue $\mu$ relative to the Dirichlet eigenvalues of (\ref{eq:OntheInt2}) determines the branch variation in (\ref{eq:ThetaEq}).

\begin{prop} \label{prop:index-main}
Let $N_e$, $N_o$ denote the number of zeros of $\xi^e$ and $\xi^o$ in $(0,1)$, and let $k\geq1$ be so that $\tau_k<\mu<\tau_{k+1}$ (or $k=0$ if $\mu<\tau_1$). Then, 
\begin{enumerate}
    \item [(i)] If $k$ is even, then
    \begin{equation}
        N_e = k, \quad N_o = k+1, \quad \textrm{and} \quad \Theta_\mu(\phi^e(p_0),\phi^e(p_1)) < \Theta_\mu(\phi^o(p_0),\phi^o(p_1)); \nonumber
    \end{equation}
    \item [(ii)] If $k$ is odd, then
    \begin{equation}
        N_e = k+1, \quad N_o = k, \quad \textrm{and} \quad \Theta_\mu(\phi^e(p_0),\phi^e(p_1)) > \Theta_\mu(\phi^o(p_0),\phi^o(p_1)). \nonumber
    \end{equation}
\end{enumerate}
\end{prop}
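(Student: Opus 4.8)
The plan is to obtain Proposition \ref{prop:index-main} as a direct consequence of Lemmas \ref{lem:NumEvOd} and \ref{lem:ZeroCount}, with no new input required. First I would apply Lemma \ref{lem:NumEvOd}, which expresses $N_e$ and $N_o$ explicitly in terms of the index $k$ of the largest Dirichlet eigenvalue $\tau_k$ of \eqref{eq:OntheInt2} lying below $\mu$: when $k$ is even one has $N_e=k$ and $N_o=k+1$, and when $k$ is odd one has $N_e=k+1$ and $N_o=k$. This already gives the first two assertions in each of cases (i) and (ii). Note that, regardless of the parity of $k$, the two counts differ by exactly one; this is consistent with the general fact, recorded before Lemma \ref{lem:NumEvOd}, that $N_e$ is always even and $N_o$ always odd, which in turn rests on Assumptions \ref{ass:eig} and \ref{ass:phi} ensuring $\mu\notin\{\tau_n\}_{n=1}^\infty$, so that $\xi^e,\xi^o$ are nontrivial with only simple zeros.

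Second, I would read off the strict ordering of the two counts: if $k$ is even then $N_e=k<k+1=N_o$, whereas if $k$ is odd then $N_e=k+1>k=N_o$. Applying Lemma \ref{lem:ZeroCount}, which gives the equivalence $N_e>N_o$ if and only if $\Theta_\mu(\phi^e(p_0),\phi^e(p_1))>\Theta_\mu(\phi^o(p_0),\phi^o(p_1))$ --- and hence, since $N_e\ne N_o$ always, also the reverse equivalence $N_e<N_o$ if and only if $\Theta_\mu(\phi^e(p_0),\phi^e(p_1))<\Theta_\mu(\phi^o(p_0),\phi^o(p_1))$ --- and matching the two parity cases with the two inequality cases yields the stated strict inequalities between the first-order eigenvalue variations in (i) and (ii).

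There is no real obstacle here: the content of the argument lives entirely in the two preceding lemmas (and ultimately in the Sturm comparison and Wronskian computations used to prove them), and this proposition is purely their bookkeeping. The one point worth flagging is that the inequalities on $\Theta_\mu$ are strict; this is automatic because $N_e$ and $N_o$ have opposite parity and so can never coincide, whence neither $\Theta_\mu$ can equal the other --- cf.\ Remark \ref{rem:BananaSplit}, where the same parity observation is used to note that at most one of the two variations can vanish.
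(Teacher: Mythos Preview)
Your proposal is correct and matches the paper's approach exactly: the paper states that Proposition \ref{prop:index-main} follows immediately from combining Lemmas \ref{lem:NumEvOd} and \ref{lem:ZeroCount}, and your write-up simply makes this combination explicit. Your observation that the strict inequalities on $\Theta_\mu$ come from the parity mismatch $N_e\ne N_o$ is also in line with the paper's Remark \ref{rem:BananaSplit}.
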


\quad Let index$(\mu,\Omega_L)$ denote the index of the eigenvalue $\mu$ in the Neumann spectrum of $\Omega_{L}$ and index$(\lambda^{\eps,e},\Omega_\eps)$, index$(\lambda^{\eps,o},\Omega_\eps)$ the index of the eigenvalues $\lambda^{\eps,e}$, $\lambda^{\eps,o}$ respectively in the Neumann spectrum of $\Omega_\eps$. Alongside Theorem \ref{thm:EigVar}, Proposition \ref{prop:index-main} and  \cite[Theorem $2.2$]{A95}, as highlighted in Remark \ref{rem:order}, imply the following statement about eigenvalue index.

\begin{cor} \label{cor:index-main}
There exists $\eps_0>0$ such that for $0<\epsilon<\eps_0$,
\begin{enumerate}
    \item [(i)] If $k$ is even, then
    \begin{equation}
        \emph{index}\left(\lambda^{\eps,e},\Omega_\eps\right) = 2\:\emph{index}\left(\mu,\Omega_L\right) + k-1 \quad \textrm{and} \quad \emph{index}\left(\lambda^{\eps,o},\Omega_\eps\right) = 2\:\emph{index}\left(\mu,\Omega_L\right) + k; \nonumber
    \end{equation}
    \item [(ii)] If $k$ is odd, then
    \begin{equation}
        \emph{index}\left(\lambda^{\eps,e},\Omega_\eps\right) = 2\:\emph{index}\left(\mu,\Omega_L\right) + k \quad \textrm{and} \quad \emph{index}\left(\lambda^{\eps,o},\Omega_\eps\right) = 2\:\emph{index}\left(\mu,\Omega_L\right) + k-1. \nonumber
    \end{equation}
\end{enumerate}
\end{cor}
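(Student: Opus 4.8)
The plan is to deduce Corollary \ref{cor:index-main} as pure bookkeeping, combining the eigenvalue-counting statement of \cite[Theorem $2.2$]{A95} (as recorded in Remark \ref{rem:order}), the first-order asymptotics of Theorem \ref{thm:EigVar}, and the sign comparison of Proposition \ref{prop:index-main}; no new analysis should be required.

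First I would pin down the position of the pair $\lambda^{\eps,e},\lambda^{\eps,o}$ in the Neumann spectrum of $\Omega_\eps$. I count the entries of the ordered sequence $\{\lambda_j\}_{j=1}^\infty=\{\mu_k\}\cup\{\tau_m\}$ of \eqref{eq:EigenvalueList} that are strictly below $\mu$: by Assumption \ref{ass:eig}, $\mu$ is a simple Neumann eigenvalue of $\Omega_L$ and $\mu\notin\{\tau_m\}$, so by the symmetry of $\Omega_0$ each of the $\textrm{index}(\mu,\Omega_L)-1$ Neumann eigenvalues of $\Omega_L$ strictly below $\mu$ contributes two entries to $\{\mu_k\}$, and there are $k$ Dirichlet eigenvalues $\tau_1<\cdots<\tau_k<\mu$ of \eqref{eq:OntheInt2}. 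Hence $\mu$ occupies positions $n=2\,\textrm{index}(\mu,\Omega_L)+k-1$ and $n+1$ in $\{\lambda_j\}$. Since \cite[Theorem $2.2$]{A95} guarantees that the Neumann eigenvalues of $\Omega_\eps$ converge to $\{\lambda_j\}$ with multiplicity and in order, there is $\eps_1>0$ so that for $0<\eps<\eps_1$ the branches $\lambda^{\eps,e},\lambda^{\eps,o}$ occupy precisely the indices $2\,\textrm{index}(\mu,\Omega_L)+k-1$ and $2\,\textrm{index}(\mu,\Omega_L)+k$ in the Neumann spectrum of $\Omega_\eps$, in one of the two possible orders --- this is exactly Remark \ref{rem:order}.

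Next I would resolve which order occurs. Theorem \ref{thm:EigVar} gives $\lambda^{\eps,e}-\lambda^{\eps,o}=\eps\bigl(\Theta_\mu(\phi^e(p_0),\phi^e(p_1))-\Theta_\mu(\phi^o(p_0),\phi^o(p_1))\bigr)+o(\eps)$, and under Assumptions \ref{ass:eig} and \ref{ass:phi} the two coefficients are distinct --- this follows from Lemma \ref{lem:ZeroCount} together with the parity mismatch $N_e\neq N_o$, as recorded in Remark \ref{rem:BananaSplit}. Writing $\delta>0$ for the absolute difference of these coefficients, for every $\eps$ small enough that the $o(\eps)$ term is dominated by $\tfrac{\delta}{2}\eps$, the branch with the strictly smaller value of $\Theta_\mu$ is the strictly smaller eigenvalue and hence carries index $2\,\textrm{index}(\mu,\Omega_L)+k-1$, while the other carries index $2\,\textrm{index}(\mu,\Omega_L)+k$. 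Proposition \ref{prop:index-main} then reads off the answer: when $k$ is even we have $\Theta_\mu(\phi^e(p_0),\phi^e(p_1))<\Theta_\mu(\phi^o(p_0),\phi^o(p_1))$, so $\lambda^{\eps,e}$ is the lower branch, giving case (i); when $k$ is odd the inequality reverses and $\lambda^{\eps,o}$ is the lower branch, giving case (ii). I would then set $\eps_0$ to be the minimum of $\eps_1$, the threshold from Theorem \ref{thm:EigVar} together with the $\delta$ estimate just above, and the threshold in Lemma \ref{lem:SymmetryEigF} justifying the $e,o$ labels.

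The only point deserving care --- and the ``main obstacle,'' such as it is --- is making the various ``for small $\eps$'' quantifiers uniform: the convergence in \cite[Theorem $2.2$]{A95}, the expansion in Theorem \ref{thm:EigVar}, and the symmetry classification in Lemma \ref{lem:SymmetryEigF} each hold below their own threshold depending only on the fixed data $(\Omega_0,g,\mu,\phi)$, so the common $\eps_0$ is simply their minimum. All substantive work has already been carried out in Sections \ref{sec:Upper}--\ref{sec:NDoriginal}.
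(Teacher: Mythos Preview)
Your proposal is correct and follows essentially the same approach as the paper, which simply states that the corollary follows from Theorem \ref{thm:EigVar}, Proposition \ref{prop:index-main}, and \cite[Theorem $2.2$]{A95} as highlighted in Remark \ref{rem:order}. You have spelled out carefully the bookkeeping that the paper leaves implicit, including the counting argument for the position $2\,\textrm{index}(\mu,\Omega_L)+k-1$ and the use of Remark \ref{rem:BananaSplit} to ensure the two $\Theta_\mu$ coefficients differ.
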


%%%%%%%%%%%%%%%%%%%%%%%%%%%%%%%%%%%%%%%%%%%%%%%%%%%%%%%
\section{Nodal Domains of the Perturbed Eigenfunctions}\label{sec:NDperturbed}

\quad In this section, we determine the qualitative behavior of the nodal sets of $\varphi^{\eps,e}$ and $\varphi^{\eps,o}$ in the perturbed domain $\Omega_\epsilon$. Continuing to work under Assumptions \ref{ass:eig} and \ref{ass:phi}, we use Theorem \ref{thm:EigFnVar} to establish a count for the number of nodal domains of each eigenfunction and establish the following proposition.

\begin{prop} \label{prop:nodal-main}
Let $\llbracket \phi,\Omega_L\rrbracket$ denote the number of nodal domains of $\phi$ in $\Omega_L$, and likewise let $\llbracket \varphi^{\eps,e},\Omega_\eps\rrbracket$, $\llbracket \varphi^{\eps,o},\Omega_\eps\rrbracket$ denote the respective number of nodal domains of $\varphi^{\eps,e}$ and $\varphi^{\eps,o}$ in $\Omega_\eps$. Finally, let $k\geq0$ be as in Proposition \ref{prop:index-main}. Then there exists $\eps_0>0$ such that for all $0<\eps<\eps_0$, we have:
\begin{enumerate}
    \item [(i)] If $k$ is even, then
    \begin{equation}
        \llbracket \varphi^{\eps,e},\Omega_\eps\rrbracket \leq  2\llbracket \phi,\Omega_L\rrbracket + k-1  \quad \textrm{and} \quad \llbracket\varphi^{\eps,o},\Omega_\eps\rrbracket \leq  2\llbracket \phi,\Omega_L\rrbracket + k; \nonumber
    \end{equation}
    \item [(ii)] If $k$ is odd, then
    \begin{equation}
        \llbracket \varphi^{\eps,e},\Omega_\eps\rrbracket\leq  2\llbracket \phi,\Omega_L\rrbracket + k  \quad \textrm{and} \quad \llbracket\varphi^{\eps,o},\Omega_\eps\rrbracket \leq  2\llbracket \phi,\Omega_L\rrbracket + k-1. \nonumber
    \end{equation}
\end{enumerate}
Moreover, if $\phi$ has no crossings in its nodal set in $\Omega_L$, then we have equality in all of the above statements.
\end{prop}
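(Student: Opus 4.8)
The plan is to split $\Omega_\eps$ into the two ends $\Omega_L,\Omega_R$ and the neck $R_\eps$, to count nodal domains of $\varphi^{\eps,\bullet}$ (where $\bullet\in\{e,o\}$) in each piece using the eigenfunction approximation of Theorem~\ref{thm:EigFnVar}, and to track how the pieces glue at the two mouths of the neck. Since $\Omega_L$ and $\Omega_R$ communicate only through $R_\eps$, a nodal domain of $\varphi^{\eps,\bullet}$ in $\Omega_\eps$ either lies in $\Omega_L$, or lies in $\Omega_R$, or meets $R_\eps$; the ones meeting $R_\eps$ will be counted exactly, and I will show the others number at most $\llbracket\phi,\Omega_L\rrbracket$ in each end, with equality when $\phi$ has no crossings. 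Throughout, I would first upgrade the $H^1$ bounds of Theorem~\ref{thm:EigFnVar} (together with the boundedness of $\lambda^{\eps,\bullet}$) to $C^1_{loc}$ convergence $\varphi^{\eps,\bullet}\to\phi$ inside $\Omega_L$ and uniform convergence up to the smooth part of $\partial\Omega_L$, via interior and Neumann-boundary elliptic estimates.

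For the neck, Theorem~\ref{thm:EigFnVar} gives $\varphi^{\eps,e}\to\xi^e=\xi^{\phi^e}_\mu$ in $H^1(R_\eps)$ and $\varphi^{\eps,o}\to\xi^o$, and the remainder estimate (\ref{eq:smallR}) forces $\partial_y\varphi^{\eps,\bullet}$ to be negligible. Rescaling $R_\eps$ to the fixed domain $\{0<x<1,\ 0<y<g(x)\}$ and applying elliptic estimates to the rescaled equation, $\varphi^{\eps,\bullet}$ converges uniformly on the rescaled neck to $\xi^\bullet(x)$. Since $\xi^\bullet$ solves the non-degenerate equation (\ref{eq:OntheInt2}), its $N_\bullet$ interior zeros are simple with nonvanishing derivative, so for $\eps$ small the nodal set of $\varphi^{\eps,\bullet}$ in $R_\eps$ consists of exactly $N_\bullet$ arcs crossing the neck, partitioning $R_\eps$ into $N_\bullet+1$ slabs $S_0,\dots,S_{N_\bullet}$ on which the sign alternates. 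By Assumption~\ref{ass:phi}, $\xi^\bullet(0)=\phi^\bullet(p_0)\neq 0$ and $\xi^\bullet(1)=\phi^\bullet(p_1)\neq 0$, so $\varphi^{\eps,\bullet}$ keeps a fixed sign across each mouth; hence $S_0$ merges with the unique nodal domain of $\varphi^{\eps,\bullet}$ in $\Omega_L$ touching $p_0$ and $S_{N_\bullet}$ with the one in $\Omega_R$ touching $p_1$, while $S_1,\dots,S_{N_\bullet-1}$ are their own nodal domains. Counting the straddling domains once each yields the exact identity
\[
\llbracket\varphi^{\eps,\bullet},\Omega_\eps\rrbracket=\llbracket\varphi^{\eps,\bullet},\Omega_L\rrbracket+\llbracket\varphi^{\eps,\bullet},\Omega_R\rrbracket+N_\bullet-1 .
\]

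For the ends, let $Z\subset\overline{\Omega_L}$ be the nodal set of $\phi$. On $\overline{\Omega_L}$ minus a small tube around $Z$, $\phi$ is bounded away from $0$, so by uniform convergence the nodal set of $\varphi^{\eps,\bullet}$ is contained in that tube for small $\eps$. Away from a fixed neighborhood $V$ of the finitely many singular points of $Z$, the vertices of $\Omega_L$, and the points where $Z$ meets $\partial\Omega_L$, one has $\nabla\phi\neq 0$ on $Z$, so by $C^1$-convergence and the implicit function theorem the nodal set of $\varphi^{\eps,\bullet}$ in $\Omega_L\setminus V$ is a $C^1$-small perturbation of $Z\setminus V$, in natural bijection with it. Inside each component of $V$ — a small disk or sector $B$ of a \emph{fixed} radius $r$ about a special point $p$ where $\phi$ vanishes to order $m_p$ — I choose $r$ small enough that $\partial B\cap Z$ consists of exactly $2m_p$ transverse points with $\nabla\phi\neq 0$, and small enough that $|B|$ is below the uniform Faber--Krahn-type lower bound $\delta_0=\delta_0(\mu)>0$ for the area of a nodal domain of $\varphi^{\eps,\bullet}$ (each such domain being a first eigenfunction of a mixed Dirichlet--Neumann problem with eigenvalue $\lambda^{\eps,\bullet}\le 2\mu$; on the Neumann portion of the boundary one reflects across the locally smooth arc, as in \cite{BCM-pleijel}). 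Then no nodal domain of $\varphi^{\eps,\bullet}$ is contained in any component of $V$; consequently the nodal set of $\varphi^{\eps,\bullet}$ in $B$ is a forest with exactly $2m_p$ endpoints on $\partial B$, so it divides $B$ into at most $2m_p$ regions, and the induced partition of the $2m_p$ boundary arcs of $\partial B$ is a coarsening of the one produced by the $2m_p$ sectors of $\phi$. A routine local-to-global gluing along $\partial V$, using the bijective correspondence outside $V$, then shows the perturbation can only merge nodal domains (i.e.\ resolve crossings), never create new ones, so $\llbracket\varphi^{\eps,\bullet},\Omega_L\rrbracket\le\llbracket\phi,\Omega_L\rrbracket$; and when $Z$ has no crossings, $V$ is empty, the correspondence is bijective, and equality holds. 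The reflection diffeomorphism $\Omega_L\to\Omega_R$ sending $\phi$ to $\phi^\bullet|_{\Omega_R}$ gives the same statements over $\Omega_R$.

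Combining the displayed identity with these bounds gives $\llbracket\varphi^{\eps,\bullet},\Omega_\eps\rrbracket\le 2\llbracket\phi,\Omega_L\rrbracket+N_\bullet-1$, with equality if $\phi$ has no crossings. Substituting the values of $N_e,N_o$ from Lemma~\ref{lem:NumEvOd} / Proposition~\ref{prop:index-main} — namely $N_e=k,\ N_o=k+1$ when $k$ is even, and $N_e=k+1,\ N_o=k$ when $k$ is odd — yields exactly the four (in)equalities in cases (i) and (ii). I expect the main obstacle to be the content of the third paragraph: establishing the uniform Faber--Krahn area bound for nodal domains meeting the Neumann boundary (handling the smooth boundary via reflection and the corners via a direct estimate), and carrying out the local analysis near the singular points of $Z$, near the vertices, and near $\partial\Omega_L$ cleanly enough to justify the "no new nodal domains" claim; the neck count and the $C^1_{loc}$ upgrade are comparatively routine once Theorem~\ref{thm:EigFnVar} is available.
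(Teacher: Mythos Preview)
Your overall architecture---split $\Omega_\eps$ into ends and neck, count in each, and glue via the identity $\llbracket\varphi^{\eps,\bullet},\Omega_\eps\rrbracket=\llbracket\varphi^{\eps,\bullet},\Omega_L\rrbracket+\llbracket\varphi^{\eps,\bullet},\Omega_R\rrbracket+N_\bullet-1$---matches the paper's, and the final substitution of $N_e,N_o$ is correct. But the neck step, which you call ``comparatively routine'', is where the real gap lies.

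The claim ``rescaling $R_\eps$ to a fixed domain and applying elliptic estimates to the rescaled equation, $\varphi^{\eps,\bullet}$ converges uniformly to $\xi^\bullet$'' does not go through as stated. Under $(x,y)\mapsto(x,y/\eps)$ the equation becomes $\partial_x^2 u+\eps^{-2}\partial_Y^2 u+\lambda u=0$, which is singularly anisotropic: its ellipticity constant blows up, so interior Schauder or $L^p$ estimates give nothing uniform in $\eps$. The $H^1(R_\eps)$ bound from Theorem~\ref{thm:EigFnVar} is genuinely too weak to yield $L^\infty$ control on a thin strip by a one-line appeal to regularity. The paper's Lemma~\ref{lem:SameNumZeros} avoids uniform convergence entirely: it uses a cylinder Faber--Krahn inequality to show no nodal domain fits in a slab of width $c_1^*$ (independent of $\eps$), then uses the $H^1$ bound to locate a finite set of vertical cross-sections $\{x_j^*\}$, spaced by at most $c_1^*$, on which a one-dimensional Sobolev argument forces $\varphi^{\eps,\bullet}$ to share the sign of $\xi^\bullet$. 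The nodal count in the neck then follows by continuity between consecutive cross-sections.

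The second gap is at the mouths. You write ``$\varphi^{\eps,\bullet}$ keeps a fixed sign across each mouth'' and ``$S_0$ merges with the unique nodal domain in $\Omega_L$ touching $p_0$'', but neither your neck convergence (which fails at $x=0,1$) nor your bulk $C^1_{loc}$ convergence (which fails at the re-entrant corner $p_0$) reaches this region. The paper isolates a disc $\Omega_4^\delta(\eps)$ around each junction and proves separately (Lemma~\ref{lem:NbhdofP0}, using Lemmas~\ref{lem:Omega4} and~\ref{lem:BCM}) that no nodal domain can be trapped there; this is what justifies the gluing.

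Your bulk argument is essentially correct and close in spirit to the paper's, though the paper's version (Lemmas~\ref{lem:BCM} and~\ref{lem:nodal-bulk}, Proposition~\ref{prop:Omega-L}) is cleaner: rather than a local topological analysis near each crossing of $Z$, it simply shows via the area lower bound that every nodal domain of $\varphi^{\eps,\bullet}$ must meet the set $\Omega_L(\phi,\delta)$ where the sign is already fixed by interior convergence, so there are at most as many nodal domains as components of $\Omega_L(\phi,\delta)$. This sidesteps the corner and boundary-crossing analysis you flag as the main obstacle.
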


\quad Combining this result with Corollary \ref{cor:index-main} in Section \ref{sec:NDoriginal} implies Theorem \ref{thm:Main}. To prove Proposition \ref{prop:nodal-main}, we carry out an analysis that is performed locally according to a partition of $\Omega_{\eps}$. A version of this partition was also used in \cite{BCM-pleijel}, when providing an upper bound on the number of nodal domains of the Neumann eigenfunctions of a family of chain domains (which includes the symmetric dumbbell domains under consideration in this paper). Properties of the partition and a partition of unity adapted to it depend on certain geometric properties of the domain that are independent of the neck width $\eps$. We define these quantities in terms of the base domain $\Omega_1$ (that is the  dumbbell domain $\Omega_\eps$ with $\eps = 1$) and $L^* \eqdef \text{Length}(\pa\Omega_0) + \text{Length}(\{y = g(x):x\in[0,1]\}) + 1$. Note that $L^*$ provides an upper bound on the perimeter of $\Omega_\eps$ for all $0<\eps<1$.  We now define the four geometric quantities:
\begin{itemize}
     \item \emph{Normalized curvature constant, $\kappa^*$}: We define $\kappa^*>0$ so that the curvature of each smooth segment of $\pa \Omega_1$ is bounded above by $\kappa^*/L^*$.
    \item \emph{Vertex control constant, $\delta^*$}:  The constant $\delta^*>0$ is defined so that the following holds. For each vertex  $p \in \pa \Omega_1$, we have that the connected component of  $ \pa\Omega_1$ in the disc centered at $p$ of radius $L^*\delta^*$ consists of two smooth curves joined at $p$, and after possibly rotating, we may assume that their tangent lines at $p$ agree with the lines $\theta = \tfrac{\pi}{2} \pm\tfrac{\theta_0}{2}$ for some $0 < \theta_0 < \pi$. These are graphs with respect to the $x$-axis, contained within the lines $\theta = \tfrac{\pi}{2} \pm\tfrac{\theta_0}{4}$ and $\theta = \tfrac{\pi}{2} \pm\tfrac{3\theta_0}{4}$, and with slope bounded by $1/\delta^*$.
        \item  \emph{Normalized cut-distance constant, $\sigma^*$}: The constant $\sigma^*$ is defined so that, for $\eta\leq L^*\delta^*$, the cut-distance, given by
    \begin{align*}
    \inf_{u\in K_{\eta}}\sup_{\delta>0}\left\{\delta: \;\text{dist}\Big(\gamma(u)+sn(u),\pa\Omega_0\Big)=s \; \text{for all} \;s\in[0,\delta] \Big.\right\},
\end{align*}
is bounded from below by $\sigma^*\eta$. Here, $\gamma:K_{\eta}\to\mathbb{R}^2$ is a parameterization of $\pa \Omega_0$ with the parts of $\pa\Omega_0$ in the discs of radius $\eta$ centered at each vertex of $\pa\Omega_0$ excluded, for a union of intervals $K_{\eta}$, and $n(u)$ is the inward unit normal at $\gamma(u)\in\pa\Omega_0$. (Note that this constant is denoted by $\tau^*$ in \cite{BCM-pleijel}.)
   \item \emph{Neck-width constant, $w^*$}: The constant $w^*$ is defined to provide control on the widths and regularity of the neck, so that $\min_{x\in[0,1]}g(x) \geq w^*$ and    $\max_{x\in[0,1]}|g'(x)| \leq  1/w^*$. 
\end{itemize}

With these constants in tow, we can now define the partition and describe some of its properties.
\begin{lemma}\label{lem:Partition}
    There exists a small constant $\delta_0>0$, depending on the above geometric constants, such that for any $\delta<\delta_0$ and $\epsilon<\delta/4$, $\Omega_\eps$ can be partitioned as
    \begin{equation}
        \Omega_\epsilon=\bigcup_{j=0}^4 \Omega_j^\delta(\epsilon) \nonumber
    \end{equation}
    where the following holds:
\begin{itemize}
    \item[1)] $\Omega_{2}^{\delta}(\eps)$ is the intersection of $\Omega_{\eps}$ with discs of radius $\delta$ centered at each vertex of $\p\Omega_{L}$ and $\p\Omega_R$.
    \item[2)] $\Omega_{4}^{\delta}(\eps)$ is the intersection of $\Omega_{\eps}$ with discs of radius $\delta$ centered at $p_0 = (0,0)$ and $p_1=(1,0)$ at either end of the neck.
    \item[3)] $\Omega_{3}^{\delta}(\eps)$ is  the remainder of the neck $R_{\eps}\backslash\Omega_{4}^{\delta}(\eps)$.
    \item[4)] $\Omega_{1}^{\delta}(\eps)$ is  $\{(x,y)\in\Omega_L\cup\Omega_R:\emph{dist}((x,y),\pa\Omega_L\cup\pa\Omega_R)<\tfrac{3}{4}\sigma^*\delta\}\backslash \left(\Omega_{2}^{\delta}(\eps)\cup\Omega_{4}^{\delta}(\eps)\right)$, and $\Omega_{0}^{\delta}(\eps)$ is given by
    \begin{align*}
        \Omega_{0}^{\delta}(\eps) = \Omega_{\eps}\backslash \bigcup_{j=1}^{4}\Omega_{j}^{\delta}(\eps).
    \end{align*}
\end{itemize}
Moreover, there exists a constant $C^*$ such that there exists a partition of unity $\chi_j^\delta$ adapted to this partition,  with gradient bounded almost everywhere by $C^*\delta^{-1}$. Here $\sigma^*$ is the normalized cut-distance constant, and $C^*$ depends only on the four geometric constants defined above.
\end{lemma}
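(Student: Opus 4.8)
Since the pieces $\Omega_1^\delta(\eps),\dots,\Omega_4^\delta(\eps)$ are prescribed explicitly in 1)--4) and $\Omega_0^\delta(\eps)$ is defined as the complement, the partition itself exists as soon as $\delta_0$ is chosen so that these sets are pairwise disjoint (apart from common boundary) and have the asserted structure; the substance of the lemma is the construction of the adapted partition of unity $\{\chi_j^\delta\}$ with gradient $\le C^*\delta^{-1}$. The plan is as follows.

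First I would fix $\delta_0>0$, depending only on the geometry of $\Omega_0$ and the four constants $\kappa^*,\delta^*,\sigma^*,w^*$, small enough that for every $\delta<\delta_0$: (a) the closed discs of radius $\delta$ about the finitely many vertices of $\p\Omega_L\cup\p\Omega_R$ and about $p_0,p_1$ are pairwise disjoint, the vertex discs are disjoint from the segment $[0,1]\times\{0\}$, and the discs about $p_0,p_1$ lie inside the flatness strips $\{|x|<\ell\}$ and $\{|x-1|<\ell\}$; (b) $\delta\le L^*\delta^*$, so the vertex control constant governs $\p\Omega_\eps$ inside each vertex disc and the cut-distance bound may be applied with $\eta=\delta$; and (c) the collar $\{(x,y)\in\Omega_L\cup\Omega_R:\mathrm{dist}((x,y),\p\Omega_L\cup\p\Omega_R)<\tfrac34\sigma^*\delta\}$, with the vertex discs removed, is an embedded tubular neighbourhood of $\p\Omega_L\cup\p\Omega_R$ foliated by inward normal segments, and the collar is itself disjoint from the vertex discs and from the neck. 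Because $\eps<\delta/4$ forces $R_\eps\subset[0,1]\times[0,\eps]$, a point $(x,y)\in R_\eps$ with $x\le\delta/2$ then satisfies $|(x,y)-p_0|<\delta$ (and symmetrically at $p_1$), so $R_\eps=\Omega_3^\delta(\eps)\cup(\Omega_4^\delta(\eps)\cap R_\eps)$ and $\Omega_3^\delta(\eps)$ is a connected central part of the neck containing its portion over $[\delta,1-\delta]$; together with the connecting segments sitting inside $\Omega_4^\delta(\eps)$, this yields $\overline{\Omega_0^\delta(\eps)}\subset\Omega_L\cup\Omega_R$ at distance $\ge\tfrac34\sigma^*\delta$ from $\p\Omega_L\cup\p\Omega_R$, and the five pieces cover $\Omega_\eps$ with the stated structure.

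Next I would build $\{\chi_j^\delta\}$ from three families of Lipschitz cutoffs, each passing from $1$ to $0$ over a length comparable to $\delta$ and hence with gradient $\lesssim\delta^{-1}$ almost everywhere: radial bumps $\beta_p$ about each vertex $p$ and $\beta_{p_0},\beta_{p_1}$ about the neck ends, equal to $1$ on the disc of radius $\delta/2$ and to $0$ outside the disc of radius $\delta$; a collar cutoff $h(\rho)$, where $\rho\eqdef\mathrm{dist}(\cdot,\p\Omega_L\cup\p\Omega_R)$ on $\Omega_L\cup\Omega_R$ is $1$-Lipschitz and $h$ is a fixed profile with $h\equiv1$ on $[0,\tfrac14\sigma^*\delta]$ and $h\equiv0$ on $[\tfrac34\sigma^*\delta,\infty)$; and a cutoff $q(x)$ on $R_\eps$ depending only on $x$, with $q\equiv1$ on $[2\delta,1-2\delta]$ and $q\equiv0$ off $(\delta,1-\delta)$. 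Then I would set
\begin{align*}
\chi_2^\delta=\sum_{p}\beta_p,\qquad \chi_4^\delta=\beta_{p_0}+\beta_{p_1},\qquad \chi_3^\delta=q(x),\qquad \chi_1^\delta=h(\rho)\big(1-\chi_2^\delta-\chi_4^\delta\big),
\end{align*}
each extended by $0$ outside its natural domain, and $\chi_0^\delta=1-\chi_1^\delta-\chi_2^\delta-\chi_3^\delta-\chi_4^\delta$. By construction $\sum_j\chi_j^\delta\equiv1$, each $\chi_j^\delta$ is supported within a $C\delta$-neighbourhood of $\Omega_j^\delta(\eps)$, and the product rule together with the per-factor estimates gives $|\nabla\chi_j^\delta|\le C^*\delta^{-1}$ a.e., with $C^*$ depending only on $\kappa^*,\delta^*,\sigma^*,w^*$.

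The one point that genuinely needs checking is $\chi_0^\delta\ge0$, that is $\chi_1^\delta+\chi_2^\delta+\chi_3^\delta+\chi_4^\delta\le1$ everywhere. The separation in (a) makes the supports of distinct vertex bumps disjoint and also makes $\chi_2^\delta\chi_4^\delta\equiv0$; moreover $\chi_3^\delta\chi_4^\delta\equiv0$ (since $q\equiv0$ on $[0,\delta]\cup[1-\delta,1]$ while $\beta_{p_i}$ vanishes outside the radius-$\delta$ disc) and $\chi_1^\delta\chi_3^\delta\equiv0$ (since $\chi_1^\delta$ lives in $\Omega_L\cup\Omega_R$ and $\chi_3^\delta$ in the disjoint set $R_\eps$). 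Hence at any point at most one bump among $\{\beta_p,\beta_{p_0},\beta_{p_1}\}$ and at most one of $\chi_1^\delta,\chi_3^\delta$ is active, so the sum is dominated by $h(\rho)\big(1-\beta\big)+\beta\le1$, by $q(x)+\beta\le1$, or by $h(\rho)\le1$ or $q(x)\le1$, according to which terms are active; the same case analysis also gives $\chi_1^\delta\ge0$ on the collar. This geometric bookkeeping --- which rests on the separation of the special points encoded in $\delta_0$ and on the flatness of $\p\Omega_0$ near $p_0,p_1$ (so that $\p\Omega_L$ and the neck lie on opposite sides of the seam, and the collar never enters $R_\eps$) --- is the one delicate part; everything else reduces to the elementary cutoff estimates sketched above.
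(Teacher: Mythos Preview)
The paper does not prove this lemma itself; it simply refers the reader to \cite[Section~3.2]{BCM-pleijel}, in particular \cite[Lemma~3.2]{BCM-pleijel}. Your explicit construction is along the standard lines one would expect and fills in what the paper omits.

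There is, however, one concrete gap. With your choices, $q\equiv0$ on $[0,\delta]$ while $\beta_{p_0}\equiv0$ outside the $\delta$-disc, so $\chi_3^\delta$ and $\chi_4^\delta$ have \emph{disjoint} (not overlapping) supports in the neck. You use this to get $\chi_3^\delta\chi_4^\delta\equiv0$, but it also forces $\chi_0^\delta=1-q(x)-\beta_{p_0}-\beta_{p_1}$ to be strictly positive on the portion of $R_\eps$ with $x\in(\delta/2,2\delta)$, in particular on the neck boundary $\{y=0\}\cup\{y=\eps g(x)\}$ there. That is incompatible with the intended meaning of ``adapted'': in the downstream application (the bulk case in Lemma~\ref{lem:BCM}) one asserts $u_0=\chi_0^\delta u\in H^1_0(D)$, which requires $\chi_0^\delta\equiv0$ on $\p\Omega_\eps$. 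The fix is easy: on $R_\eps$ define $\chi_3^\delta=1-\chi_4^\delta$ and extend by zero to $\Omega_0$. This is Lipschitz across the seams $\{x=0\}$ and $\{x=1\}$ because $\eps<\delta/4$ places those segments inside the half-radius discs where $\beta_{p_i}\equiv1$, and it has the same $C^*\delta^{-1}$ gradient bound. Then $\chi_3^\delta+\chi_4^\delta\equiv1$ on $R_\eps$, so $\chi_0^\delta\equiv0$ there, while your case analysis for $\chi_0^\delta\ge0$ on $\Omega_L\cup\Omega_R$ (where now only $\chi_1^\delta,\chi_2^\delta,\chi_4^\delta$ can be active) goes through unchanged.
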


\quad Motivated by \cite[Definition $3.1$]{BCM-pleijel}, this partition separates neighborhoods of the smooth parts of the boundaries of $\Omega_L,\Omega_R$, their vertices, and the neck $R_\epsilon$. Figure \ref{fig:Partition} below features an example of such a partition when $\Omega_L$ and $\Omega_R$ are square domains. For a proof of Lemma \ref{lem:Partition}, see \cite[Section $3.2$]{BCM-pleijel}, and in particular \cite[Lemma $3.2$]{BCM-pleijel}.

\begin{figure}[H]
    \centering
    \includegraphics[scale=0.7]{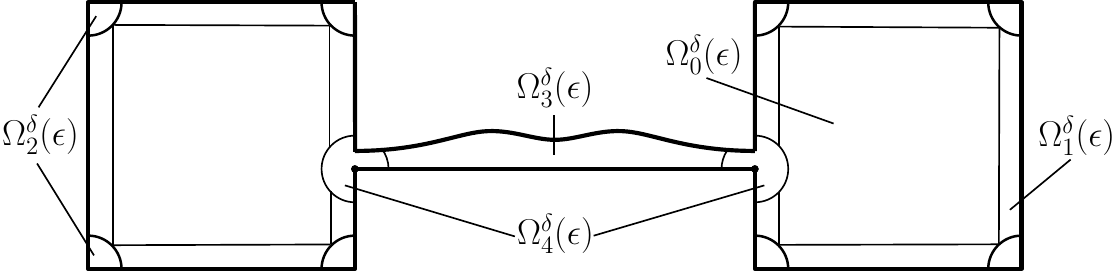}
    \caption{The partition of $\Omega_\eps$ given in Lemma \ref{lem:Partition}.}
    \label{fig:Partition}
\end{figure}

\quad The constant $\delta_0$ can also be chosen such that the connected components of $\p\Omega_L$ and $\p\Omega_R$ in  $\overline{\Omega_1^\delta(\eps)}$ can be straightened as in \cite[Lemma $3.1$]{BCM-pleijel}. Namely, if one component of $\p\Omega_L\cap\overline{\Omega_1^\delta(\epsilon)}$ has the parametrization $\{\gamma(s):s\in I\}$ and if $n(s)$ denotes the inward-pointing unit normal of $\p\Omega_L$ at $\gamma(s)$, then, for $0<\delta<\delta_0$,
\begin{align} \label{eqn:straightening}
    F:I\times\left[0,\tfrac{3}{4}\sigma^*\delta\right]\to \Omega_L, \qquad (s,t)\mapsto (x,y) = \gamma(s) +tn(s) 
\end{align}
is a diffeomorphism onto its image, with Jacobian bounded above and below. See \cite[Section $3$]{lena-pleijel} for more details on this construction. The top boundary of $R_\epsilon$ can also be straightened by a diffeomorphism, allowing us to establish the following result for nodal domains intersecting $\Omega_4^\delta(\epsilon)\cap R_\eps$.

\begin{lemma}\label{lem:Omega4}
    Let $\delta_0$ be as in Lemma \ref{lem:Partition}. There exists a positive constant $\tilde{\delta}$, independent of $\epsilon$ and $\delta$, such that if for a nodal domain $D$ of $\varphi^{\epsilon,e}$ we have $\norm{\varphi^{\epsilon,e}}_{L^2\left(\Omega_4^\delta(\epsilon)\cap D\cap R_\epsilon\right)}\geq \tfrac{1}{2}\norm{\varphi^{\epsilon,e}}_{L^2(D)}$, for some $0<\delta<\delta_0$, then the set
    \begin{equation}
        \left\{x\in\mathbb{R}:(x,y)\in D\cap R_\eps \text{ for some } y\in \mathbb{R}\right\} \nonumber 
    \end{equation}
    has size at least $\tilde{\delta}$. The same statement holds for the nodal domains of $\varphi^{\epsilon,o}$.
\end{lemma}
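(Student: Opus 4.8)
The plan is to show that the stated $L^2$-concentration, together with the uniform bound $\lambda^{\eps,e}\to\mu$, forces a lower bound of the form $\lambda^{\eps,e}\ge c/\omega^{2}$, where $\omega$ denotes the diameter of the set $I_D=\{x:(x,y)\in D\cap R_\eps\text{ for some }y\}$ whose size we want to bound below; since $\lambda^{\eps,e}$ stays bounded as $\eps\to0$, this yields $\omega\ge\tilde\delta$ for a positive $\tilde\delta$ depending only on $\mu$ and the neck-width constant $w^{*}$ of Lemma \ref{lem:Partition}. The mechanism is a one-dimensional Poincar\'e inequality in the $x$-direction along the neck: the hypothesis guarantees that at least a quarter of the $L^{2}$-mass of $\varphi^{\eps,e}$ on $D$ lies inside $D\cap R_\eps$, and if $D\cap R_\eps$ sits in a thin $x$-slab of width $\omega$, then the mixed Dirichlet--Neumann Rayleigh quotient $\norm{\nabla\varphi^{\eps,e}}_{L^2(D)}^{2}/\norm{\varphi^{\eps,e}}_{L^2(D)}^{2}=\lambda^{\eps,e}$ cannot be smaller than a multiple of $\omega^{-2}$.

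Carrying this out, I would first dispose of the case $\omega\ge 1$, and then, since $\omega<1$ forces $\overline{I_D}$ to omit $0$ or $1$, reduce by the left-right symmetry of $\Omega_\eps$ to the situation $\sup I_D<1$, so the neck part of $D$ stays away from the right junction $p_1$ (the omitted case being treated by the mirror argument, slicing from the left end). Next I would straighten the neck via the diffeomorphism $\Phi(x,y)=(x,y/g(x))$, which maps $R_\eps$ onto $(0,1)\times(0,\eps)$ and sends the curved top and bottom of $R_\eps$ to the flat Neumann edges $\{\tilde y=\eps\}$ and $\{\tilde y=0\}$; since $w^{*}\le g\le 1$ and $|g'|\le 1/w^{*}$, this change of variables is comparable in both directions, so with $\psi$ the push-forward of $\varphi^{\eps,e}$ and $\tilde D=\Phi(D\cap R_\eps)$ one gets $\int_{D\cap R_\eps}|\nabla\varphi^{\eps,e}|^{2}\ge c(w^{*})\int_{\tilde D}|\partial_{\tilde x}\psi|^{2}$ and $\int_{\tilde D}|\psi|^{2}\ge\int_{D\cap R_\eps}|\varphi^{\eps,e}|^{2}$. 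Then I would slice $\tilde D$ by horizontal lines: for almost every height $\tilde y_0\in(0,\eps)$ every connected component $(c_1,c_2)$ of the slice has its right endpoint in the interior of $\Omega_\eps$ --- it is not on the top or bottom of the neck, since $0<\tilde y_0<\eps$, and not at $\tilde x=1$, since $\sup I_D<1$ --- hence it lies on the nodal set, so $\psi(c_2,\tilde y_0)=0$; the elementary one-sided Poincar\'e inequality on an interval of length at most $\omega$ gives $\int_{c_1}^{c_2}|\psi(\cdot,\tilde y_0)|^{2}\le\omega^{2}\int_{c_1}^{c_2}|\partial_{\tilde x}\psi(\cdot,\tilde y_0)|^{2}$, and summing over components and integrating in $\tilde y_0$ yields $\norm{\psi}_{L^2(\tilde D)}^{2}\le\omega^{2}\norm{\partial_{\tilde x}\psi}_{L^2(\tilde D)}^{2}$. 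Finally, chaining these with the nodal-domain identity $\norm{\nabla\varphi^{\eps,e}}_{L^2(D)}^{2}=\lambda^{\eps,e}\norm{\varphi^{\eps,e}}_{L^2(D)}^{2}$ (the weak formulation of \eqref{eq:PDE} on $D$, with Dirichlet data on the nodal set and Neumann data on $\partial\Omega_\eps$) and with the concentration hypothesis in the form $\norm{\varphi^{\eps,e}}_{L^2(D\cap R_\eps)}^{2}\ge\tfrac14\norm{\varphi^{\eps,e}}_{L^2(D)}^{2}$ gives $\lambda^{\eps,e}\ge c(w^{*})/\omega^{2}$, so that fixing $\eps_0$ with $\lambda^{\eps,e}\le\mu+1$ for $\eps<\eps_0$ produces $\omega\ge\tilde\delta:=\min\{1,(c(w^{*})/(\mu+1))^{1/2}\}$, a constant independent of $\eps$ and $\delta$.

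I expect the main obstacle to be handling the variable height $\eps g(x)$ of the neck: in the original coordinates a horizontal slice of $D$ can terminate on the slanted upper boundary $y=\eps g(x)$, a Neumann point where $\varphi^{\eps,e}$ need not vanish, which would defeat the Poincar\'e step --- this is exactly what the straightening $\Phi$ and the constant $w^{*}$ are there to fix. A secondary bookkeeping point is the possibility that $D$ protrudes into $\Omega_L$ through the segment $\{x=0\}$: the slice endpoints at $\tilde x=0$ need not lie on the nodal set, but the one-sided Poincar\'e inequality (needing vanishing only at the right endpoint) still applies, so this is harmless; and the concentration hypothesis is precisely what excludes the degenerate scenario in which $D$ lives mostly in $\Omega_L$ with only a thin filament in the neck. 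The argument for $\varphi^{\eps,o}$ is identical, since it uses only that $\varphi^{\eps,o}$ is a Neumann eigenfunction of $\Omega_\eps$ with eigenvalue tending to $\mu$ together with the stated $L^2$-concentration.
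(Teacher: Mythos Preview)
Your proof is correct and takes a genuinely different route from the paper's. Both arguments begin by straightening the neck via the same diffeomorphism $\Phi(x,y)=(x,y/g(x))$ and both ultimately rely on the Rayleigh-quotient identity $\int_D|\nabla\varphi^{\eps,e}|^2=\lambda^{\eps,e}\int_D|\varphi^{\eps,e}|^2$ on the nodal domain. From there, however, the paper proceeds by a \emph{propagation} argument: it locates, by averaging, a vertical slice $\{x=x^*\}$ on which the (straightened) eigenfunction has large $L^2$-density, extends by zero, and uses the fundamental theorem of calculus together with the gradient bound to show that this slice density remains bounded below for all $x\in[x^*,x^*+\tilde\delta]$, forcing $I_D\supset[x^*,x^*+\tilde\delta]$. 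Your argument is instead a direct \emph{Poincar\'e} estimate: after the symmetry reduction to $\sup I_D<1$, the right endpoint of every horizontal slice component lies on the nodal set, so the one-sided Poincar\'e inequality on each component, summed and integrated, yields $\lambda^{\eps,e}\ge c/\omega^2$ in a single stroke. Your route is a bit slicker and avoids singling out a special slice; the paper's route gives the marginally stronger conclusion that $I_D$ actually contains an interval of length $\tilde\delta$, and it mirrors the structure of the cited argument in \cite{BCM-pleijel}.

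One small point of bookkeeping: you bound the diameter $\omega$ of $I_D$, whereas the lemma's ``size'' is most naturally read as Lebesgue measure. This is harmless: each slice component $(c_1,c_2)$ is an interval contained in $I_D$, hence $c_2-c_1\le|I_D|$, and your Poincar\'e step then gives $\lambda^{\eps,e}\ge c/|I_D|^2$ just as well. Correspondingly, the shortcut ``dispose of $\omega\ge1$'' only trivializes the diameter bound; for the measure bound one notes instead that any slice component equal to all of $(0,1)$ already forces $|I_D|=1$, while otherwise at least one endpoint of the component lies on the nodal set and the Poincar\'e argument applies from that side.
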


\quad This statement provides a lower bound on the projection of any such nodal domain onto the $x$-axis. In particular, it implies that neither eigenfunction produces a nodal domain contained within small neighborhoods near the edges of the neck. 

\begin{proof}[Proof of Lemma \ref{lem:Omega4}.] We demonstrate the proof for $\varphi^{\epsilon,e}$ as an analogous argument holds for $\varphi^{\epsilon,o}$.  Following the proof of \cite[Lemma $6.2$]{BCM-pleijel}, suppose $D$ is a nodal domain of $\varphi^{\epsilon,e}$ that intersects $ \Omega_4^\delta(\epsilon)\cap R_\epsilon$. By multiplying $\varphi^{\epsilon,e}$ by a constant, which leaves the nodal domains invariant, we may assume that $\norm{\varphi^{\epsilon,e}}_{L^2(D)}=1$. Let 
\begin{equation}\label{eq:NeckDiffeo}
F(x,y)\eqdef\left(x,yg(x)^{-1}\right): R_\epsilon \to [0,1]\times[0,\epsilon]
\end{equation}
denote a diffeomorphism which flattens the boundary and preserves vertical lines. Similarly to the diffeomorphism featured in (\ref{eqn:straightening}), note that the Jacobian of $F$ is bounded by the neck-width constant $w^*$, independently of $\epsilon$. Then $V=F\left(D\cap R_\epsilon\right)$ is a subset of a strip of width $\epsilon$ and $v=\varphi^{\epsilon,e}\circ F^{-1}\in H^1(V)$ satisfies $$\int_V v^2 \geq c>0 \quad \textrm{and} \quad \int_V |\nabla v|^2\leq C\mu$$ for some constants $c,C$ that are independent of $\epsilon$ and $\delta$.

\quad Considering, without loss of generality, the region of $\Omega_4^\delta(\epsilon)$ near $p_0$, there exists some $x^*$ with $0\leq x^*<\delta<\delta_0<1/4$ such that
\begin{equation}
    \int_{V(x^*)} v^2(x^*,y)dy\geq c\delta^{-1}> 4c, \nonumber
\end{equation}
where $V(x^*) = \{y:(x^*,y)\in V\}$ is the cross-section of $V$ at $x=x^*$. We extend $v$ identically by zero outside of $V$, noting that this extension is not in $H^1(\mathbb{R}^2)$ because $v$ is not identically zero along the vertical line $\{x=0\}$ or the horizontal lines $\{y=0\}$ or $\{y=\eps\}$. However, if we label this extension as $w$, then
\begin{equation}
    w(\cdot, y)\in H^1([x^*,\delta]) \quad \textrm{and} \quad \int_{V(x^*)}\int_{x^*}^{\delta} \left|\nabla w\right|^2 dxdy\leq C\mu. \nonumber
\end{equation}
Using the fundamental theorem of calculus to write
\begin{equation}
    w(x,y)=w(x^*,y)+\int_{x^*}^x \p_tw(t,y)dt, \nonumber
\end{equation}
we can then express
\begin{align}
    w^2(x,y)&=w^2(x^*,y)+2w(x^*,y)\int_{x^*}^x \p_t w(t,y)dt+\left(\int_{x^*}^x \p_tw(t,y)dt\right)^2 \nonumber \\
    &\geq w^2(x^*,y)-2\left|w(x^*,y)\right|\left|\int_{x^*}^x \p_tw(t,y)dt\right|+\left(\int_{x^*}^x \p_tw(t,y)dt\right)^2 \nonumber \\
    &\geq \frac{1}{2}w^2(x^*,y)-\left(\int_{x^*}^x \p_tw(t,y)dy\right)^2 \nonumber
\end{align}
by Young's inequality. According to Cauchy-Schwarz, this implies
\begin{equation}\label{eq:wLowBd}
    w^2(x,y)\geq \frac{1}{2}w^2(x^*,y)-|x-x^*|\int_{x^*}^x \left|\p_t w(t,y)\right|^2 dt.
\end{equation}
\quad Integrating (\ref{eq:wLowBd}) over $y\in V(x^*)$ yields
\begin{align}
    \int_{V(x^*)} w^2(x,y)dy&\geq \frac{1}{2}\int_{V(x^*)}w^2(x^*,y)dy-|x-x^*|\int_{V(x^*)} \int_{x^*}^x \left|\p_tw(t,y)\right|^2 dtdy \nonumber \\ &\geq \frac{1}{2}\int_{V(x^*)} v^2(x^*,y)dy-|x-x^*| \int_{V(x^*)} \int_{x^*}^\delta |\nabla w|^2 dxdy\nonumber \\ &\geq 2c-C\mu|x-x^*| \nonumber
\end{align}
and hence    
\begin{equation}
    \int_{y_1}^{y_2}w^2(x,y)dy\geq c>0 \nonumber
\end{equation}
for all $x\geq x^*$ such that $|x-x^*|\leq c(C\mu)^{-1}\defeq \tilde{\delta}$. Note that $\tilde{\delta}$ is independent of $\eps$ and $\delta$. Because $w$ is equal to zero outside of $V$, this implies that $V$ has nonempty intersection with $\{x=x^*+\tilde{\delta}\}$. Because $F$ preserves vertical lines and $D$ is connected, this provides the desired result.
\end{proof}

\quad In the remainder of the neck, we use the estimates in Theorem \ref{thm:EigFnVar}  to establish a nodal count for the perturbed eigenfunctions in $R_\epsilon$. From Section \ref{sec:NDoriginal}, we retain the notation $\xi^e=\xi_\mu^{\phi^e}$ and $\xi^o=\xi_\mu^{\phi^o}$ as solutions to (\ref{eq:OntheInt2}) with boundary conditions determined by $\phi^e,\phi^o$.

\begin{lemma} \label{lem:SameNumZeros}
Let $N_e,N_o$ denote the number of zeros in $(0,1)$ of $\xi^e,\xi^o$ respectively. There exists a constant $\delta_0>0$, such that for each $0<\delta<\delta_0$, we can find $\eps_0>0$ so that, if $0<\eps<\eps_0$, then
\begin{enumerate}
    \item the nodal set of $\varphi^{\epsilon,e}$ in $\Omega_3^\delta(\epsilon)$ consists of $N_e$ disjoint curves, each intersecting the top and bottom boundaries of $R_\eps$;

    \item the nodal set of $\varphi^{\epsilon,o}$ in $\Omega_3^\delta(\epsilon)$ consists of $N_o$ disjoint curves, each intersecting the top and bottom boundaries of $R_\eps$.
\end{enumerate}
Further, $\varphi^{\eps,e}$ and $\varphi^{\eps,o}$ have the same sign as $\phi^e(p_0)$ and $\phi^o(p_0)$, respectively, on the left side of $\p\Omega_3^\delta(\eps)\cap R_\eps$ and the same sign as $\phi^e(p_1)$ and $\phi^o(p_1)$, respectively, on the right side of $\p\Omega_3^\delta(\eps)\cap R_\eps$.
\end{lemma}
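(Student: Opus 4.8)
The plan is to transplant the $H^1$-estimate of Theorem~\ref{thm:EigFnVar} to a fixed cylinder, upgrade it there to a uniform-in-$\eps$ $C^1$-estimate by elliptic regularity, and then read off the nodal picture from the simple zeros of $\xi^e$ using the implicit function theorem; it suffices to treat (1), as (2) follows verbatim with $(\phi^o,\xi^o,N_o)$ in place of $(\phi^e,\xi^e,N_e)$. The first move is to flatten the neck by the diffeomorphism $(x,y)\mapsto(x,\,y/(\eps g(x)))$ of $R_\eps$ onto the unit square $Q=(0,1)\times(0,1)$ and to set $v_\eps(x,\sigma)\eqdef\varphi^{\eps,e}\bigl(x,\eps g(x)\sigma\bigr)$. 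Then $v_\eps$ solves on $Q$ a divergence-form elliptic equation whose coefficients are built from $g$ alone (hence independent of $\eps$) except that the coefficient of $\partial_\sigma^2$ carries the factor $\eps^{-2}$, together with homogeneous Neumann conditions on $\{\sigma=0\}\cup\{\sigma=1\}$. Since $\xi^e=\xi_\mu^{\phi^e}$ is a function of $x$ alone solving $-(g(\xi^e)')'=\mu g\xi^e$, a routine change of variables converts the bound $\norm{\varphi^{\eps,e}-\xi_\mu^{\phi^e}}_{H^1(R_\eps)}^2=o(\eps)$ of Theorem~\ref{thm:EigFnVar} into $\norm{v_\eps-\xi^e}_{H^1(Q)}\to0$, and, using $\partial_\sigma\xi^e\equiv0$, into $\norm{\partial_\sigma v_\eps}_{L^2(Q)}\to0$; moreover $\lambda^{\eps,e}-\mu=O(\eps)$ by Theorem~\ref{thm:EigVar} (equivalently, by Lemmas~\ref{lem:LwrBd} and \ref{lem:LowerBd}).

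The central step is to promote this to the statement that, on $\overline J\times[0,1]$ for a fixed compact interval $J\subset\subset(0,1)$ containing the $x$-range of $\Omega_3^\delta(\eps)$ (that range lies in $(c\delta,1-c\delta)$ for a fixed $c\in(0,1)$, since $\Omega_3^\delta(\eps)$ omits the $\delta$-discs about $p_0,p_1$ while $\eps<\delta/4$), one has $v_\eps\to\xi^e$ and $\partial_\sigma v_\eps\to0$ in $C^1$. I expect this to be the main obstacle: the $\eps^{-2}$ weight makes the equation for $v_\eps$ singularly perturbed, so elliptic estimates cannot be applied on $Q$ directly. The approach is to undo the anisotropy by further rescaling $x$ by $\eps$ over $x$-windows of length $\sim\eps$, which turns the equation into a genuinely uniformly elliptic (indeed Laplace-like) Neumann problem on $(-1,1)\times(0,1)$ with continuous coefficients converging to constant ones and with a source of size $O(\eps^2)$; its formal $\eps=0$ limit forces the limit profile to be $\sigma$-independent and to satisfy the one-dimensional Sturm--Liouville equation, consistently with $\xi^e$. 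Interior and Neumann-boundary $W^{2,p}$/Schauder estimates, Sobolev embedding, and a bootstrap then give uniform $C^1$ bounds, and an Arzel\`a--Ascoli argument combined with the $H^1$-convergence just established pins down the limit as $\xi^e$ and the vanishing of the transverse derivative. This is, in effect, the extension to an eigenvalue of multiplicity two of the $\eps$-dependent elliptic bookkeeping of \cite[Section~3]{A95}, and it is the part requiring the most care; one could alternatively invoke a quantitative stability result for nodal sets of solutions of elliptic equations near a solution with non-degenerate zeros.

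Granting the $C^1$-closeness, the nodal count is elementary. Let $z_1<\dots<z_{N_e}$ be the zeros of $\xi^e$ in $(0,1)$. By Assumption~\ref{ass:phi}, $\xi^e(0)=\xi^e(1)=\phi^e(p_0)\neq0$, and by the non-degeneracy of \eqref{eq:OntheInt2} these zeros are simple with $(\xi^e)'(z_i)\neq0$. Shrinking $\delta_0$ if necessary (allowing it to depend on $(\mu,\phi)$) so that $z_1,\dots,z_{N_e}\in(2\delta_0,1-2\delta_0)$, each $z_i$ lies strictly inside the $x$-range of $\Omega_3^\delta(\eps)$ for every $\delta<\delta_0$; fix $r>0$ with the intervals $(z_i-r,z_i+r)$ pairwise disjoint and contained in $(\delta,1-\delta)$, and with $|\xi^e|\geq c_0>0$ off their union and $|(\xi^e)'|\geq c_1>0$ on their union. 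For $\eps$ small, the $C^1$-closeness forces $\varphi^{\eps,e}$ to be nonvanishing with the sign of $\xi^e$ on the part of $\Omega_3^\delta(\eps)$ with $x\notin\bigcup_i(z_i-r,z_i+r)$; in particular it has the sign of $\phi^e(p_0)$ on the left portion of $\partial\Omega_3^\delta(\eps)\cap R_\eps$ (there $x<z_1$, so $\xi^e$ keeps the constant sign $\sgn\phi^e(p_0)$) and the sign of $\phi^e(p_1)$ on the right portion, which is the final assertion of the lemma. On each flattened strip $(z_i-r,z_i+r)\times(0,1)$, the map $x\mapsto v_\eps(x,\sigma)$ is, for every fixed $\sigma$, strictly monotone with $\partial_x v_\eps$ of sign $\sgn\bigl((\xi^e)'(z_i)\bigr)$ and with endpoint values of opposite sign, so it has a unique zero $\zeta_i^\eps(\sigma)$; by the implicit function theorem $\sigma\mapsto\zeta_i^\eps(\sigma)$ is of class $C^1$, and its graph, pulled back to $R_\eps$, is exactly the nodal set of $\varphi^{\eps,e}$ in that strip and runs from the bottom edge $\{y=0\}$ to the top edge $\{y=\eps g(x)\}$. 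These $N_e$ arcs are pairwise disjoint, and by the sign-definiteness of $\varphi^{\eps,e}$ between consecutive strips they exhaust the nodal set of $\varphi^{\eps,e}$ in $\Omega_3^\delta(\eps)$. This proves (1), and (2) is identical.
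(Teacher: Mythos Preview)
Your strategy diverges from the paper's, and the step you correctly flag as the main obstacle is a genuine gap that the sketch does not close. After rescaling $X=(x-x_0)/\eps$ on a window of length $\eps$, the elliptic estimate you would apply takes the form $\|v_\eps\|_{C^1(\text{half window})}\le C\|v_\eps\|_{L^2(\text{full window, rescaled})}$, and the right side equals $\eps^{-1/2}$ times the $L^2$ norm of $v_\eps$ over an $x$-interval of length $2\eps$. The only available control on that local norm is the global bound $\|v_\eps\|_{L^2(Q)}=O(1)$ coming from Theorem~\ref{thm:EigFnVar}, so the rescaled $L^2$ input is $O(\eps^{-1/2})$, not $O(1)$, and no uniform $C^1$ bound follows. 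A bootstrap does not escape this without an a~priori uniform $L^\infty$ bound on $\varphi^{\eps,e}$ in the neck, and the $H^1$ estimate alone does not supply one; the Arzel\`a--Ascoli step therefore has no compactness to feed on.

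The paper sidesteps pointwise estimates entirely. First, a Faber--Krahn inequality on the flat cylinder (reached by your flattening map followed by a reflection across the top edge) shows that any nodal domain of $\varphi^{\eps,e}$ contained in $R_\eps$ must project onto an $x$-interval of length at least some $c_1^*>0$ independent of $\eps$. Second, writing $f_\eps(x)=\int_0^{\eps g(x)}\bigl(|\varphi^{\eps,e}-\xi^e|^2+|\nabla(\varphi^{\eps,e}-\xi^e)|^2\bigr)\,dy$ and using $\int_0^1 f_\eps=o(\eps)$ from Theorem~\ref{thm:EigFnVar} together with a Chebyshev/pigeonhole argument, one produces a finite set $\{x_j^*\}\subset(0,1)$, avoiding the zeros of $\xi^e$, with spacing at most $c_1^*$ and with $f_\eps(x_j^*)\le C\eps$; at each such $x_j^*$ the fundamental theorem of calculus in $y$ then forces $\varphi^{\eps,e}$ to share the (nonzero) sign of $\xi^e(x_j^*)$ along the whole vertical segment $R_\eps(x_j^*)$. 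Since no nodal domain can fit between consecutive segments, continuity forces the nodal set of $\varphi^{\eps,e}$ in each slab to consist of a single crossing arc precisely when $\xi^e$ has a zero there, and to be empty otherwise, yielding the $N_e$ arcs and the endpoint sign statements with no regularity beyond $H^1$.

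Your final implicit-function-theorem step is correct \emph{conditionally} on $C^1$ convergence and would in fact yield more (the nodal arcs as $C^1$ graphs close to the vertical lines $x=z_i$); but as written, that convergence has not been established.
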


\quad This result states that for small enough $\eps$, the number of nodal domains produced by $\varphi^{\epsilon,e}$ and $\xi^e$, or analogously by $\varphi^{\eps,o}$ and $\xi^o$, in the neck matches. In combination with Lemma \ref{lem:ZeroCount}, this implies that between $\lambda^{\eps,e}$ and $\lambda^{\eps,o}$, the upper eigenvalue branch corresponds precisely to the eigenfunction that produces more nodal domains in $R_\epsilon$.

\begin{proof}[Proof of Lemma \ref{lem:SameNumZeros}.] We demonstrate the proof for $\varphi^{\epsilon,e}$ as an analogous argument holds for $\varphi^{\epsilon,o}$. We first show that $\varphi^{\eps,e}$ cannot have a nodal domain contained in a thin subset of the neck: Suppose $\varphi^{\epsilon,e}$ has a nodal domain $D$ contained in a subset of $R_\epsilon$ of width $c_1>0$, i.e. $D\subseteq R_\epsilon\cap \{(x,y): x_0\leq x\leq x_0+c_1\}$ for some $x_0$. Let
\begin{equation}
    F:R_\epsilon\to [0,1]\times[0,\epsilon] \nonumber
\end{equation}
be as in (\ref{eq:NeckDiffeo}), and let $\tilde{F}$ be the composition of $F$ with a reflection across the top boundary of the neck, and a gluing of the resulting boundary components. Then $\textrm{Area}\left(\tilde{F}(D)\right)\leq 2c_1\epsilon$, and $\tilde{F}(D)$ is contained in a flat cylinder  of circumference $2\epsilon$. According to a Faber-Krahn inequality for the cylinder, \cite[Lemma $6.3$]{BCM-pleijel}, for small enough $\epsilon$,
\begin{equation}\label{eq:AreaLwrBd}
    \textrm{Area}\left(\tilde{F}(D)\right)\geq 2\epsilon \lambda_1(\mathbb{D})^{1/2} \lambda_1(\tilde{F}(D))^{-1/2}
\end{equation}
where $\lambda_1(\cdot)$ denotes the smallest Dirichlet eigenvalue of the set and $\mathbb{D}$ is the unit disc. By using $\varphi^{\eps,e}\circ\tilde{F}$ in the variational formulation of the first eigenvalue, we have $\lambda_1(\tilde{F}(D))=C\mu$, and hence (\ref{eq:AreaLwrBd}) states that
\begin{equation}
    c_1\geq C^{-1/2}\lambda_1(\mathbb{D})^{1/2}\mu^{-1/2}. \nonumber
\end{equation}
Thus, if we let $c_1^*<C^{-1/2}\lambda_1(\mathbb{D})^{1/2}\mu^{-1/2}$, then no nodal domain of $\varphi^{\epsilon,e}$ can exist in $R_\epsilon\cap \{(x,y):x_0\leq x\leq x_0+c_1^*\}$. Because $c_1^*$ can be chosen independent of $\epsilon$, this provides a lower bound on the size of the projection onto the $x$ axis of any nodal domain in the neck.

\quad We now show that, for sufficiently small $\eps>0$, it is possible to partition $R_\epsilon$ by a collection of vertical lines, separated by a distance no more than $c_1^*$, such that $\varphi^{\epsilon,e}$ and $\xi^e$ have the same sign along each line. According to Theorem \ref{thm:EigFnVar}, there exists some positive constant $C$ such that
\begin{equation}\label{eq:H1restrict}
    \int_U \int_0^{\epsilon g(x)}\left|\varphi^{\epsilon,e}-\xi^e\right|^2+\left|\nabla(\varphi^{\epsilon,e}-\xi^e)\right|^2 dydx=o(\epsilon)<C\epsilon
\end{equation}
for any $U\subseteq (0,1)$ with size $|U|$. We let 
\begin{equation}
    f_\epsilon(x)\eqdef\int_0^{\epsilon g(x)}\left|\varphi^{\epsilon,e}-\xi^e\right|^2+\left|\nabla(\varphi^{\epsilon,e}-\xi^e)\right|^2 dy\geq 0, \nonumber
\end{equation}
and define $\tilde{U}=\{x\in U:f_\eps(x) \leq 2 C|U|^{-1}\eps\}$. Then, $$\int_U f_\epsilon(x)dx\geq \int_{U\backslash \tilde{U}}f_\epsilon(x)dx>2C|U|^{-1}\epsilon|U\backslash \tilde{U}|,$$ and so by \eqref{eq:H1restrict}, $|U\backslash \tilde{U}| < \tfrac{1}{2}|U|$. As a consequence, $|\tilde{U}| >\tfrac{1}{2}|U|$. 
%\textcolor{blue}{then there exists a subset $\tilde{U}$ of $U$ with $|\tilde{U}|\geq\tfrac{1}{2}|U|$, such that for all $x^*\in \tilde{U}$ we have $f_\epsilon(x^*)\leq 2C|U|^{-1}\epsilon.$ Otherwise, $\int_U f_\epsilon(x)dx>C\epsilon$, contradicting (\ref{eq:H1restrict}).} 
By partitioning $(0,1)$ into sets $U_j$ with size less than $c_1^*/2$, we can therefore recover a finite sequence of points $\{x_j^*\}$ such that
\begin{equation}
    |x_i^*-x_j^*|\leq c_1^* \quad \textrm{and} \quad f_\epsilon(x_j^*)\leq 2C|U_j|^{-1}\epsilon. \nonumber
\end{equation}
We can also choose these points so that $\xi^e(x_j^*)\neq 0$ for all $j$. Because the sequence is finite, we can bound $f_\eps(x_j^*)$ above by a uniform multiple of $\epsilon$.

\quad For $x\in(0,1)$, let $R_\epsilon(x)$ denote the cross-section of $R_\epsilon$ at $x$. Because $\xi^e$ depends only on $x$, we claim that $\varphi^{\epsilon,e}$ and $\xi^e$ share the same sign along each $R_\epsilon(x_j^*)$ for sufficiently small $\eps>0$. For the sake of contradiction, suppose that for some $j$, $\xi^e(x_j^*)$ is negative and $\varphi^{\eps,e}(x_j^*,y^*)$ is positive for some $y^*$. Set $c_2^*=\min_j |\xi^e(x_j^*)|$ and $$p(y)\eqdef \varphi^{\eps,e}(x_j^*,y) - \xi^e(x_j^*)$$ so that $p(y^*)>c_2^*$ and
\begin{equation}
    \left|p(y)-p(y^*)\right|^2=\left|\int_y^{y^*} p'(t)dt\right|^2\leq |y-y^*| \int_0^{\epsilon g(x_j^*)}|p'(t)|^2 dt\leq \epsilon f_\epsilon(x_j^*). \nonumber
\end{equation}
This implies that for $\epsilon$ sufficiently small, $p(y)>c_2^*/2$ for all $y$ in the cross-section. However, this contradicts (\ref{eq:H1restrict}) and hence $\varphi^{\epsilon,e}$ and $\xi^e$ have the same sign along the cross section of $R_\epsilon$ at all the $x_j^*$. 

\quad By construction, no nodal domain of $\varphi^{\epsilon,e}$ exists between the vertical lines $R_\epsilon(x_j^*)$ and $R_\epsilon(x_{j+1}^*)$. By continuity, $\varphi^{\epsilon,e}$ vanishes at some points in this region if and only if $\xi^e(x)$ vanishes for some $x\in(x_j^*,x_{j+1}^*)$. For $\delta<\delta_0$, with $\delta_0$ as in Lemma \ref{lem:Partition}, we can reduce $c_1^*$ further if necessary to ensure that $c_1^*<\delta$, and then for $\delta$ small enough we can conclude that the nodal set of $\varphi^{\epsilon,e}$ in $\Omega_3^\delta(\epsilon)$  features $N_e$ distinct curves that intersect the top and bottom boundary of $R_\epsilon$ and that $\varphi^{\eps,e}$ has the same sign as $\xi^e$ along $\p\Omega_3^\delta(\eps)\cap R_\eps$. Taking $\delta$ small relative to the spacing between the zeros of $\xi^e$, we have that $\varphi^{\eps,e}$ then has the same sign as $\phi^e(p_0)$ along the left component of $\p\Omega_3^\delta(\eps)\cap R_\eps$ and the same sign as $\phi^e(p_1)$ along the right component of $\p\Omega_3^\delta(\eps)\cap R_\eps$.
\end{proof}

\quad Lemmas \ref{lem:Omega4} and \ref{lem:SameNumZeros} provide information about the nodal sets of $\varphi^{\eps,e}$ and $\varphi^{\eps,o}$ within the neck $R_{\eps}$. We now turn to understanding their nodal sets within the bulk domains $\Omega_L$ and $\Omega_R$. To do this, we compare the nodal sets of the perturbed eigenfunctions to those of $\phi^e$ and $\phi^o$ using Theorem \ref{thm:EigFnVar}. We first show that if the restrictions of $\varphi^{\eps,e}$, $\varphi^{\eps,o}$ to a particular nodal domain has a sufficiently large proportion of its $L^2$-mass in $\Omega_L$ or $\Omega_R$, then we can obtain a lower bound on the area of the nodal domain, independent of $\eps$.

\begin{lemma} \label{lem:BCM}
  There exist constants $c^*>0$, $\eps_0>0$, independent of $\eps>0$, such that if $0<\eps<\eps_0$ and if $D$ is a nodal domain of $\varphi^{\eps,e}$ with $\norm{\varphi^{\eps,e}}_{L^2\left(D\cap(\Omega_L\cup\Omega_R)\right)}\geq \tfrac{1}{2}\norm{\varphi^{\eps,e}}_{L^2(D)}$, then $$\emph{Area}(D)\geq c^*.$$ The same estimate holds for $\varphi^{\eps,o}$ and its nodal domains.
\end{lemma}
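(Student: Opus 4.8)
The plan is to show that the restriction of $\varphi^{\eps,e}$ to such a nodal domain $D$ is, after a localization to one piece of the partition in Lemma \ref{lem:Partition} and a reflection, a valid competitor for a Faber--Krahn inequality on a domain of area comparable to $\mathrm{Area}(D)$, with Rayleigh quotient bounded independently of $\eps$; this forces $\mathrm{Area}(D)$ to be bounded below. Write $\varphi=\varphi^{\eps,e}$ and $\lambda=\lambda^{\eps,e}$. By \cite[Theorem $2.2$]{A95} we may fix $\eps_0$ so that $\lambda\le 2\mu+1$ for all $0<\eps<\eps_0$. On a nodal domain $D$, $\varphi$ vanishes on $\partial D\cap\Omega_\eps$ and satisfies $\partial_\nu\varphi=0$ on $\partial D\cap\partial\Omega_\eps$, so integration by parts gives $\int_D|\nabla\varphi|^2=\lambda\int_D\varphi^2$; thus $\varphi|_D$ already has Rayleigh quotient $\le 2\mu+1$, and the only obstruction to applying Faber--Krahn to $D$ directly is the Neumann part of $\partial D$ lying on $\partial\Omega_\eps$.

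First I would fix $\delta<\delta_0$ as in Lemma \ref{lem:Partition} (so all constants below depend only on the geometric constants $\kappa^*,\delta^*,\sigma^*,w^*$) and take $\eps<\delta/4$. The hypothesis means a fixed positive fraction of $\int_D\varphi^2$ is contributed by the bulk $\Omega_L\cup\Omega_R$. Since the bulk decomposes into the interior piece $\Omega_0^\delta(\eps)$, the connected components of $\Omega_1^\delta(\eps)$ (one slab around each smooth arc of $\partial\Omega_0$), the individual vertex discs making up $\Omega_2^\delta(\eps)$, and the two pieces $\Omega_4^\delta(\eps)\cap(\Omega_L\cup\Omega_R)$ near $p_0,p_1$ --- a fixed number $N_*$ of sets, depending only on the number of vertices and arcs of $\partial\Omega_0$ --- the pigeonhole principle yields one such piece $P$ with $\int_{D\cap P}\varphi^2$ at least $\tfrac{1}{4N_*}\int_D\varphi^2$. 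It then suffices to show, for each type of piece, that such a mass lower bound forces $\mathrm{Area}(D)\ge c^*$ with $c^*$ independent of $\eps$, and to take the minimum of the finitely many constants obtained. This is precisely the step where the mass hypothesis is essential: it excludes the neck pieces $\Omega_3^\delta(\eps)$, whose nodal domains (by Lemmas \ref{lem:Omega4} and \ref{lem:SameNumZeros}) genuinely have area $O(\eps)$.

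For the interior piece $P=\Omega_0^\delta(\eps)$, which lies at distance $\ge c_1(\delta)>0$ from $\partial\Omega_\eps$, I would pick a cutoff $\eta$ equal to $1$ on $\Omega_0^\delta(\eps)$, supported in $\{\mathrm{dist}(\cdot,\partial\Omega_\eps)>\tfrac12 c_1(\delta)\}$, with $|\nabla\eta|\le C\delta^{-1}$. Then $\eta\varphi$ extended by zero lies in $H^1_0$ of a subdomain of $\Omega_\eps$ contained in $\overline D$; its squared $L^2$-norm is at least $\int_{D\cap\Omega_0^\delta(\eps)}\varphi^2$, and $\int|\nabla(\eta\varphi)|^2\le 2(\lambda+C^2\delta^{-2})\int_D\varphi^2$, so its Rayleigh quotient is $\le\Lambda_0(\delta)$, independent of $\eps$. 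Faber--Krahn applied to its support then gives $\mathrm{Area}(D)\ge\pi j_{0,1}^2/\Lambda_0(\delta)$. For the smooth-boundary pieces $P=\Omega_1^\delta(\eps)$ and for $P=\Omega_4^\delta(\eps)\cap(\Omega_L\cup\Omega_R)$ (where $\partial\Omega_\eps$ is the flat segment $\{x=0\}$ or $\{x=1\}$, the narrow neck opening being negligible) I would argue as in \cite[Section $6$]{BCM-pleijel}: straighten the relevant boundary arc with the diffeomorphism $F$ of \eqref{eqn:straightening} (Jacobian bounded above and below in terms of $\delta$, $\kappa^*$, $w^*$), multiply $\varphi$ by the partition-of-unity function $\chi_P^\delta$ of Lemma \ref{lem:Partition} so the product also vanishes on the lateral faces of the straightened slab as well as on the nodal set, reflect evenly across the straightened Neumann face, and pull back. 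This produces $\widehat D$ with $\mathrm{Area}(\widehat D)\le C\,\mathrm{Area}(D)$ carrying an $H^1_0(\widehat D)$ function of Rayleigh quotient $\le C(\lambda+\delta^{-2})$, and Faber--Krahn again yields the area bound.

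The step I expect to be the main obstacle is the vertex piece $P=\Omega_2^\delta(\eps)$, where the Neumann boundary is a corner of opening angle $\theta_0\in(0,\pi)$ and even reflection no longer directly applies. I would handle it exactly as in \cite[Section $6$]{BCM-pleijel}, using the vertex control constant $\delta^*$ to realize the corner as a controlled perturbation of a straight wedge and then either unfolding by the finitely many reflections of the wedge's reflection group or, equivalently, applying a power map $z\mapsto z^{\pi/\theta_0}$ to transfer to a half-plane, reflect, and pull back --- all with constants depending only on $\theta_0$ and the geometric constants, and, crucially, uniform in $\eps$, since the vertices of $\Omega_\eps$ are precisely those of $\Omega_0$ and are untouched by the neck. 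Collecting the finitely many area bounds and taking their minimum gives $c^*$; the argument for $\varphi^{\eps,o}$ is identical.
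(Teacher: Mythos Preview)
Your overall strategy---localize to one piece of the partition, eliminate the Neumann boundary by straightening and reflecting, then apply Faber--Krahn---is exactly the paper's, and the bulk, smooth-boundary, and $\Omega_4^\delta(\eps)\cap(\Omega_L\cup\Omega_R)$ cases go through as you describe. (For the last of these the paper actually routes through its corner machinery with $\theta_0=\pi$, but your direct even reflection across the flat segment $\{x=0\}$ works just as well, since even reflection of an $H^1$ function is $H^1$ regardless of the Neumann condition.)

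The gap is in the vertex case $\Omega_2^\delta(\eps)$. Neither device you propose yields uniform constants for a general opening angle $\theta_0$. Finite reflections of the wedge only close up to a disc when $\theta_0=\pi/n$. The conformal power map $z\mapsto z^{\pi/\theta_0}$ does preserve the Dirichlet integral in two dimensions, but it does \emph{not} preserve the $L^2$ norm or the area: its Jacobian $(\pi/\theta_0)^2|z|^{2(\pi/\theta_0-1)}$ degenerates at the vertex, so the Rayleigh quotient of the transformed function is uncontrolled whenever the mass of $\chi_2^\delta\varphi^{\eps,e}$ sits near $z=0$, and the Faber--Krahn bound you obtain in the image is on $\int|w'(z)|^2\,dA_z$ rather than on $\text{Area}(D)$. (A bi-Lipschitz angular dilation $(r,\theta)\mapsto(r,\pi\theta/\theta_0)$ would salvage the idea, but that is not the map you wrote, and the two are not equivalent.)

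The paper's remedy, following \cite[Proposition~5.1]{BCM-pleijel}, sidesteps reflection across the corner entirely. After a bi-Lipschitz straightening of the two boundary arcs to an exact sector $S_{\theta_0}(\delta)$, one locates a slice $\{x=x^*\}$ carrying a definite amount of $L^2$ mass and propagates that lower bound to nearby slices via the fundamental theorem of calculus and the gradient bound---the same mechanism you already used in Lemma~\ref{lem:Omega4}. This shows a fixed fraction of the mass lies away from the vertex, so a cutoff $\chi$ supported in the \emph{interior} of $S_{\theta_0}(\delta)$ produces a genuine $H^1_0$ competitor on a set of area comparable to $\text{Area}(D)$, and Faber--Krahn finishes with constants depending only on $\theta_0$, $\delta$, and the fixed geometric constants.
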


\begin{rem} \label{rem:BCM}
    The proof of Lemma \ref{lem:BCM} relies only on the fact that $\varphi^{\eps,e}$ is a Neumann eigenfunction on $\Omega_\eps$ with a bound on its eigenvalue that is independent of $\eps$. In particular, it does not make use of Assumption \ref{ass:phi}.
\end{rem}

\begin{proof}[Proof of Lemma \ref{lem:BCM}.]
We first note that from \cite[Lemma $3.3$]{BCM-pleijel}, setting $u = \varphi^{\eps,e}$ and $\lambda = \lambda^{\eps,e}$, we have
\begin{align} \label{eqn:Green}
\int_D|\nabla u|^2 = \lambda\int_Du^2,
\end{align}
and this also holds for the alternative choice of $u=\varphi^{\eps,o}$ and $\lambda=\lambda^{\eps,o}$. As in \cite[Section $3.3$]{BCM-pleijel}, we use the partition in Lemma \ref{lem:Partition} to define bulk, boundary, corner, and neck nodal domains. We fix $\delta$ with $0<\delta<\delta_0$, with $\delta_0$ chosen as in Lemma \ref{lem:Partition}. For $u = \varphi^{\eps,e}$, set $u_j = \chi_j^\delta u$. We say that the nodal domain $D$ of $u$ is a
\begin{itemize}
        \item \textit{bulk nodal domain} if $\norm{u_0}_{L^2(D)} \geq \tfrac{1}{8}\norm{u}_{L^2(D)}$;
        \item \textit{boundary nodal domain} if $\norm{u_1}_{L^2(D)} \geq \tfrac{1}{8}\norm{u}_{L^2(D)}$;
        \item \textit{corner nodal domain} if $\norm{u_2}_{L^2(D)} \geq \tfrac{1}{8}\norm{u}_{L^2(D)}$ or $\norm{u_4}_{L^2(D\cap(\Omega_L\cup\Omega_R))} \geq \tfrac{1}{8}\norm{u}_{L^2(D)}$;
        \item \textit{neck nodal domain} if $\norm{u_3}_{L^2(D)} \geq \tfrac{1}{8}\norm{u}_{L^2(D)}$ or $\norm{u_4}_{L^2(D\cap R_{\eps})} \geq \tfrac{1}{8}\norm{u}_{L^2(D)}$.
\end{itemize}
Note that every nodal domain of $u$ must fall into at least one of these categories. Because we consider a nodal domain satisfying $$\norm{u}_{L^2\left(D\cap \left(\Omega_L\cup \Omega_R\right)\right)}\geq \tfrac{1}{2}\norm{u}_{L^2(D)},$$ $D$ must be either a bulk, boundary, or corner nodal domain. By multiplying $u$ by an appropriate constant, which does not alter the nodal domains, we may assume that $\norm{u}_{L^2(D)} = 1$.
   
\quad In each case, we find a set $V$ of area comparable to $D$ and a function $v\in H_0^1(V)$ with a bounded Rayleigh quotient. Then the Faber-Krahn Theorem \cite{faber1923,krahn1925} can be applied to provide a lower bound on the area of $D$ as given in the statement of the lemma. The constants appearing in these estimates given below depend on an upper bound on the eigenvalue $\lambda$, the fixed $\delta$, and the geometric constants stated before Lemma \ref{lem:Partition} (in particular, through the constant $C^*$ in Lemma \ref{lem:Partition}), but crucially are independent of $\eps>0$. As this follows from the proofs featured in \cite[Section $4,5$]{BCM-pleijel} used to count the number of nodal domains, we describe the key ideas in each case. 

\quad If $D$ is a bulk nodal domain, then we follow the argument in \cite[Section $4.1$]{BCM-pleijel}: Since in this case $u_0\in H^1_0(D)$, we use $v= u_0= \chi_0^\delta u$, and $V=D$. From \eqref{eqn:Green}, and the bound on the gradient of $\chi_0^{\delta}$ from Lemma \ref{lem:Partition}, we have that
\begin{align*}
    \int_{D}|\nabla u_0|^2\bigg/\int_Du_0^2 \leq 8^2\int_{D}|\nabla u_0|^2\bigg/\int_D u^2 \leq C_0^*.
\end{align*}
The Faber-Krahn Theorem then gives a lower bound of
\begin{align*}
    \pi\lambda_1(\mathbb{D})\text{Area}(D) \leq C_0^*,
\end{align*}
where $\lambda_1(\mathbb{D})$ is the first Dirichlet eigenvalue of the unit disc. By rearranging this inequality, we have a lower bound on the area of $D$, independent of $\eps$.

\quad If $D$ is a boundary nodal domain, then there exists a subset, $b_{\delta}$, of a smooth component of $\p\Omega_L\cup\p\Omega_R$, contained within $\Omega_1^{\delta}(\eps)$ such that
\begin{align*}
    \int_{\left\{\text{dist}((x,y),b_\delta)<3\sigma^*\delta/4\right\}} u_1^2 \geq c_1^*\int_Du^2 = c_1^*
\end{align*}
for some constant $c_1^*$, depending only on the total number of smooth components of $\p\Omega_L\cup\p\Omega_R$. We use the straightening diffeomorphism from \eqref{eqn:straightening} and apply a reflection of $u_1$ across the straightened boundary in an argument identical to  \cite[Equations $(13)-(17)$]{lena-pleijel}, \cite[Lemma $4.1$]{BCM-pleijel}. We can then conclude that there exists a Lipschitz set $V$, a function $v\in H_0^1(V)$, and a constant $C_1^*$, such that
\begin{align*}
    \text{Area}(V) \leq C_1^*\text{Area}(D) \quad \textrm{and} \quad  \int_{V}|\nabla v|^2\bigg/\int_{V}v^2 \leq C_1^*.
\end{align*}
Applying the Faber-Krahn Theorem again gives a desired lower bound on the area of $D$.

 \quad Finally, if $D$ is a corner nodal domain, we follow \cite[Section $5$]{BCM-pleijel} and split the analysis further into two cases, depending on whether 
 \begin{equation}\label{eq:cases}
     \norm{u_2}_{L^2(D)} \geq \tfrac{1}{8}\norm{u}_{L^2(D)} = \tfrac{1}{8} \quad \textrm{or} \quad \norm{u_4}_{L^2(D\cap(\Omega_L\cup\Omega_R))} \geq \tfrac{1}{8}\norm{u}_{L^2(D)} = \tfrac{1}{8}.
\end{equation}
In the first case of (\ref{eq:cases}), there exists a vertex $v_0$ of $\p\Omega_L\cup\p\Omega_R$ with opening angle $\theta_0$ and a constant $c_2^*$, depending only on the total number of vertices, such that
   \begin{align*}
       \int_{D\cap\{\text{dist}((x,y),v_0)<\delta\}}u_2^2 \geq c_2^*.
   \end{align*}
   Letting $S_{\theta_0}(\delta) = \{(r,\theta)\in\bbR^2:r<\delta, |\theta|<\theta_0/2\}$ be a subset of an exact sector, there exists a set $V\subset S_{\theta_0}(\delta)$ and function $v\in H^1(V)$, with $v\equiv0$ on $\pa V\cap S_{\theta_0}(\delta)$, such that Area$(V)\leq C_2^*$Area$(D)$ and 
   \begin{align} \label{eqn:BCM-corner1}
       \int_V v^2\geq (C_2^*)^{-1}, \qquad \int_V |\nabla v|^2\leq C_2^*.
   \end{align}
These estimates follow by applying a transformation to $u_2$ which translates, rotates, and straightens the smooth components of $\p\Omega_L\cup\p\Omega_R$ meeting at $v_0$. See the proof of \cite[Lemma $5.1$]{BCM-pleijel} for the details of this transformation. As in Lemma \ref{lem:Omega4}, we extend $v$ by zero outside of $V$ to get a function $w$ defined on $\mathbb{R}^2$ but not necessarily in $H^1(\mathbb{R}^2)$. From the first estimate in \eqref{eqn:BCM-corner1}, there exists some $x^*$ with $|x^*|<\delta$ such that
   \begin{align*}
       \int_{V(x^*)} w(x^*,y)^2\,dy\geq (C_2^*)^{-1}\delta^{-1},
   \end{align*}
% $y_1=\inf J(x^*)$, $y_2=\sup J(x^*)$, with $J(x^*) = \{y:(x^*,y)\in V\}$ 
   where $V(x^*)=\{y:(x^*,y)\in V\}$ is the cross-section of $V$ at $x^*$.  The idea now is to use the second estimate in \eqref{eqn:BCM-corner1} to get the lower bound $\int_{y_1}^{y_2}w(x,y)^2\,dy\geq \tfrac{1}{2}(C_w^*)^{-1}\delta^{-1}$ for nearby $x>x^*$, and use this to show that there exists a cut-off function $\chi$, supported in the interior of $S_{\theta_0}(\delta)$ such that
   \begin{align*}
       \int_V |\chi w|^2 \geq (\tilde{C}_2^*)^{-1}, \qquad \int_V |\nabla(\chi w)|^2 \leq \tilde{C}_2^*,
   \end{align*}
   for a constant $\tilde{C}_2^*$ depending only on $C_2^*$ and $\delta$. This can be achieved by following exactly the proof of \cite[Proposition $5.1$]{BCM-pleijel} from equation (37) to equation (42). Since $w=v=0$ on $\pa V\cap S_{\theta_0}(\delta)$, we have $\chi v\in H_0^1(V)$, and so the Faber-Krahn Theorem gives the required lower bound on the area of $V$ and hence also $D$.

\quad In the second case of (\ref{eq:cases}), suppose, without loss of generality that $\norm{u_4}_{L^2(D\cap \Omega_R)}\geq \tfrac{1}{16}$. Since the boundary of $\Omega_R$ consists of a vertical line in the support of $u_4$, we can set $\theta_0=\pi$, $v=u_4$, and $V=\Omega_R\cap \text{supp}\,u_4$, so that \eqref{eqn:BCM-corner1} holds. Therefore, we can apply the same argument as in the first case to get the lower bound on the area of $D$.

\quad Therefore, in all cases, we get a lower bound on the area of $D$ in terms of $\lambda$, the fixed $\delta$, and the geometric constants given before Lemma \ref{lem:Partition}, and this concludes the proof of the lemma.
\end{proof}

\quad Next, we use Lemma \ref{lem:BCM}, together with Theorems \ref{thm:EigVar} and \ref{thm:EigFnVar}, to compare the structure of the nodal sets of $\varphi^{\eps,e}$ and $\varphi^{\eps,o}$ in $\Omega_L$ and $\Omega_R$ with those of $\phi^e$ and $\phi^o$.

\begin{lemma} \label{lem:nodal-bulk}
For each $\delta>0$, let $\Omega_L(\phi^e,\delta)$ be the subset of $\Omega_0^\delta(\eps)$ given by
\begin{align*}
    \left\{x\in\Omega_L: \emph{dist}(x,\pa\Omega_L)>\delta \emph{ and } \emph{dist}(x,N_{\phi^e})>\delta\right\}.
\end{align*}
Here $N_{\phi^e}$ is the nodal set of $\phi^e$, separating $\Omega_L$ into $\eta_e\eqdef\llbracket \phi^e,\Omega_L\rrbracket$ nodal domains. Then, there exists a constant $\delta_0>0$ such that for each $0<\delta<\delta_0$, there exists $\eps_0>0$, so that
\begin{enumerate}
\item[(i)] $\Omega_L(\phi^e,\delta)$ consists of $\eta_e$ connected components;
 \item[(ii)] for $0<\eps<\eps_0$, $\varphi^{\eps,e}$ has no nodal domain strictly contained within the region $\Omega_L\backslash\Omega_L(\phi^e,\delta)$;
    \item[(iii)] for $0<\eps<\eps_0$, $\varphi^{\eps,e}$ has the same sign as $\phi^e$ in $\Omega_L(\phi^e,\delta)$.
   
\end{enumerate}
Moreover, if the nodal set of $\phi^e$ contains no crossings, then $\delta<\delta_0$ and $\eps_0>0$ can be additionally chosen so that 
\begin{align*}
        \left\{x\in\Omega_L:\emph{dist}(x,N_{\phi^e})\leq\delta\right\}
    \end{align*}
has $\eta_e-1$ connected components. For $0<\eps<\eps_0$, the boundary of each component in the interior of $\Omega_L$ consists of two curves, with $\varphi^{\eps,e}>0$ on one of these curves and $\varphi^{\eps,e}<0$ on the other.
% \begin{align*}
 %       \left\{x\in\Omega_L:\emph{dist}(x,N_{\phi^e})\leq\delta\right\}
%    \end{align*}
 %   has $\eta_e-1$ connected components, with $\phi^e$ taking both positive and negative values in each of these components. Further, for $0<\eps<\eps_0$, $\varphi^{\eps,e}$ has the same sign as $\phi^e$ on $\left\{x\in\Omega_L:\emph{dist}(x,N_{\phi^e})=\delta\right\}$.

    The analogous statements also hold on $\Omega_R$ and also for $\varphi^{\eps,o}$ and $\phi^o$.
\end{lemma}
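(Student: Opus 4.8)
The plan is to establish the three enumerated claims in turn, always fixing the scale $\delta$ \emph{first} — small enough in terms of the fixed eigenfunction $\phi^e$, the geometry of $\Omega_0$, and the fixed constant $c^*$ of Lemma~\ref{lem:BCM} — and only then choosing $\eps_0=\eps_0(\delta)$. For part (i), which concerns only the fixed function $\phi^e$, I would use that $\phi^e$ is real‑analytic in the interior of $\Omega_L$, so $N_{\phi^e}$ is a locally finite union of analytic arcs meeting at finitely many points, and $\partial D_i=\overline{D_i}\cap(N_{\phi^e}\cup\pa\Omega_L)$ is piecewise analytic with finitely many singular points for each of the finitely many nodal domains $D_1,\dots,D_{\eta_e}$ of $\phi^e$. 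For $\delta$ below a threshold depending only on $\phi^e$, each $D_i$ remains nonempty and connected after excising the open $\delta$‑neighborhood of $\pa D_i$; since every component of $\Omega_L(\phi^e,\delta)$ lies in a single $D_i$ (as $\Omega_L(\phi^e,\delta)\subset\Omega_L\setminus N_{\phi^e}=\{\phi^e\neq 0\}$), this yields exactly $\eta_e$ components.

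For (iii) the key preliminary step is to upgrade Theorem~\ref{thm:EigFnVar} to interior $C^0$‑convergence. Writing $w_\eps=\varphi^{\eps,e}-\phi^e$, one has $-\Delta w_\eps=\mu w_\eps+(\lambda^{\eps,e}-\mu)\varphi^{\eps,e}$ in $\Omega_L$, whose right‑hand side tends to $0$ in $L^2(\Omega_L)$ (using $\norm{\varphi^{\eps,e}}_{\Omega_\eps}=1$, $\lambda^{\eps,e}\to\mu$, and $\norm{w_\eps}_{L^2(\Omega_L)}^2=o(\eps)$), so by interior elliptic regularity and the two–dimensional embedding $H^2\hookrightarrow C^0$ we get $\norm{\varphi^{\eps,e}-\phi^e}_{C^0(K)}\to 0$ for every compact $K\subset\Omega_L$. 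Taking $K=\overline{\Omega_L(\phi^e,\delta)}$, which is a compact subset of the interior of $\Omega_L$ disjoint from $N_{\phi^e}$, compactness gives $|\phi^e|\geq c_\delta>0$ on $\Omega_L(\phi^e,\delta)$, hence $\sgn\varphi^{\eps,e}=\sgn\phi^e$ there once $\eps<\eps_0(\delta)$. For (ii), suppose a nodal domain $D$ of $\varphi^{\eps,e}$ satisfies $D\subseteq\Omega_L\setminus\Omega_L(\phi^e,\delta)$. Then $D\subseteq\Omega_L\subseteq\Omega_L\cup\Omega_R$, so the hypothesis of Lemma~\ref{lem:BCM} holds automatically and $\text{Area}(D)\geq c^*$ for $\eps<\eps_0$, where $c^*$ is a \emph{fixed} constant (Lemma~\ref{lem:BCM} is invoked with its own internal partition scale, independent of $\delta$). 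But $\Omega_L\setminus\Omega_L(\phi^e,\delta)$ lies in the $\delta$‑neighborhood of the rectifiable set $\pa\Omega_L\cup N_{\phi^e}$, so a tubular/co‑area estimate bounds its area by $C_0\delta$ with $C_0$ depending only on the lengths of $\pa\Omega_L$ and $N_{\phi^e}$; choosing $\delta<\delta_0\le c^*/(2C_0)$ gives $\text{Area}(D)\le C_0\delta<c^*$, a contradiction.

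For the "Moreover" part, assume $N_{\phi^e}$ has no crossings, so it is a disjoint union of finitely many simple arcs (endpoints on $\pa\Omega_L$) and simple closed curves; a standard separation/Euler–characteristic count gives that it has exactly $\eta_e-1$ components, and for small $\delta$ the set $\{x\in\Omega_L:\text{dist}(x,N_{\phi^e})\le\delta\}$ is a disjoint union of $\eta_e-1$ tubes, one around each component $\Gamma_i$. No crossings means $\nabla\phi^e\neq 0$ on $N_{\phi^e}$, so for $\delta$ small the normal exponential map is a diffeomorphism on the $\delta$‑tube about $\Gamma_i$, and the part of its boundary interior to $\Omega_L$ consists of the two curves $\{\text{dist}(\cdot,\Gamma_i)=\delta\}$, on which $\phi^e$ is positive and negative respectively and bounded away from $0$; applying the $C^0$‑convergence above on these compact interior curves shows $\varphi^{\eps,e}>0$ on one and $\varphi^{\eps,e}<0$ on the other for $\eps<\eps_0$. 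The statements for $\Omega_R$ and for the pair $(\varphi^{\eps,o},\phi^o)$ follow by identical arguments, using the symmetry of $\Omega_\eps$ across $\{x=\tfrac12\}$ in the case of $\Omega_R$.

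I expect the main obstacle to be part (ii): one must make sure that the area lower bound $c^*$ produced by Lemma~\ref{lem:BCM} does not degenerate as the neighborhood scale $\delta$ shrinks, which is precisely what permits the order of quantifiers "fix $\delta$ small, then take $\eps_0$ small" to close the argument, together with the mild geometric fact that the excised collar $\Omega_L\setminus\Omega_L(\phi^e,\delta)$ has area $O(\delta)$. The elliptic‑regularity upgrade underlying (iii) is routine but is the indispensable bridge turning the $H^1$‑bound of Theorem~\ref{thm:EigFnVar} into pointwise sign information, and the soft topological bookkeeping in (i) and in the "Moreover" part needs only care with the finitely many singular points of $N_{\phi^e}$ and the vertices of $\pa\Omega_L$.
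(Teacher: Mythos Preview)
Your proposal is correct and follows essentially the same approach as the paper: part (i) by the fixed structure of $N_{\phi^e}$, part (ii) by playing the $\eps$-independent area lower bound of Lemma~\ref{lem:BCM} against the $O(\delta)$ area of the collar, and part (iii) by upgrading Theorem~\ref{thm:EigFnVar} to $C^0$ via interior elliptic regularity applied to $-\Delta w_\eps=\mu w_\eps+(\lambda^{\eps,e}-\mu)\varphi^{\eps,e}$.

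One small oversight in the ``Moreover'' part: when a nodal arc $\Gamma_i$ of $\phi^e$ meets $\pa\Omega_L$, the two curves $\{\text{dist}(\cdot,\Gamma_i)=\delta\}$ are not compact subsets of the \emph{interior} of $\Omega_L$; they run out to $\pa\Omega_L$. Your interior $C^0$-convergence does not directly control $\varphi^{\eps,e}$ near those endpoints. The paper handles this by invoking elliptic regularity for the Neumann problem up to the smooth parts of $\pa\Omega_L$ (and choosing $\delta$ so that the level set stays a definite distance from the vertices of $\pa\Omega_L$ and from $p_0$, the latter using Assumption~\ref{ass:phi}). With that adjustment your argument goes through unchanged.
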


\quad This result states that, upon small enough perturbations of the neck, the nodal sets of $\varphi^{\eps,e}$ and $\varphi^{\eps,o}$ within $\Omega_L\cup\Omega_R$ must live either in a neighborhood of the nodal sets of $\phi^e$ and $\phi^o$ respectively or near the boundary $\p\Omega_L\cup\pa\Omega_R$. This allows us to bound the number of nodal domains produced by the perturbed eigenfunctions in a subset $\Omega_L$ bounded away from $p_0$ or analogously a subset of $\Omega_R$ bounded away from $p_1$.

\begin{proof}[Proof of Lemma \ref{lem:nodal-bulk}.] 
We prove the lemma for $\varphi^{\eps,e}$ and $\phi^e$ in $\Omega_L$, but the same proof applies both for the eigenfunctions $\varphi^{\eps,o}$ and $\phi^o$, and in $\Omega_R$. The function $\phi^e$ is an eigenfunction in $\Omega_L$, independent of $\eps$, with $\eta_e$ nodal domains. Therefore, there exists a constant $\delta_0$, depending on $\phi^e$ but independent of $\eps$ such that property \textit{(i)} holds for all $0<\delta<\delta_0$.

\quad Since the area of $\Omega_L\backslash\Omega_L(\phi^e,\delta)$ is bounded by a constant multiple of $\delta$, by reducing $\delta_0$ if necessary and choosing $\eps_0$ as in Lemma \ref{lem:BCM}, we have that for all $0<\delta<\delta_0$ and $0<\eps<\eps_0$, property \textit{(ii)} holds.

\quad Fixing $\delta$ with $0<\delta<\delta_0$, there exists a constant $c_1 = c_1(\delta)$, independent of $\eps$, such that $|\phi^e|>c_1$ in $\Omega_L(\phi^e,\delta)$. The difference $\varphi^{\eps,e}-\phi^e$ satisfies
\begin{align*}
    \Delta(\varphi^{\eps,e}-\phi^e) = -\mu(\varphi^{\eps,e}-\phi^e) -(\lambda^{\eps,e}-\mu)\varphi^{\eps,e},
\end{align*}
which by Theorems \ref{thm:EigVar} and \ref{thm:EigFnVar} is bounded above by a multiple of $\eps$ in $L^2(\Omega_L)$. Applying interior elliptic regularity estimates in $\Omega_L(\phi^e,\delta)$ for each fixed $\delta<\delta_0$, there exists $\eps_0>0$, depending on $c_1(\delta)$, such that for $0<\eps<\eps_0$, the eigenfunctions  $\varphi^{\eps,e}$ and $\phi^e$ have the same sign in $\Omega_L(\phi^e,\delta)$. Hence, property \textit{(iii)} holds. 

\quad If the nodal set of $\phi^e$ contains no crossings in the closure of $\Omega_L$, then it must consist of $\eta_e-1$ smooth, disjoint curves, separating $\Omega_L$ into a bipartite partition of $\eta_e$ components where $\phi^e$ is strictly positive or negative. Therefore, there exists $\delta_0$, such that for all $0<\delta<\delta_0$, the set $$\{x\in\Omega_L:\text{dist}(x,N_{\phi^e})\leq\delta\}$$ has $\eta_e-1$ connected components, and the set $\{x\in\Omega_L:\text{dist}(x, N_{\phi^e})=\delta\}$ consists of $\eta_e-1$ pairs of curves, where on each pair, $\phi^e$ is positive on one curve and negative on the other. Moreover, there exists a constant $c_2 = c_2(\delta)>0$ such that $|\phi^e|>c_2$ on $\{x\in\Omega_L:\text{dist}(x,N_{\phi^e})=\delta\}$.
We choose a $\delta>0$ for which the set $\{x\in\Omega_L:\text{dist}(x,N_{\phi^e})=\delta\}$ is a distance at least $\tfrac{1}{4}\delta$ from any vertices of $\Omega_L$. Under Assumption \ref{ass:phi}, we can also choose $\delta$ to ensure that the set $\{x\in\Omega_L:\text{dist}(x,N_{\phi^e})\leq\delta\}$ does not contain $p_0$. Because $\Delta(\varphi^{\eps,e}-\phi^e)=O(\eps)$ in $L^2(\Omega_L)$, we can use elliptic regularity of the Neumann problem up to the smooth parts of $\pa\Omega_L$ to determine the existence of $\eps_0>0$, depending on $c_2(\delta)$, such that for $0<\eps_0<\eps$, $\varphi^{\eps,e}$ and $\phi^e$ have the same sign on $\{x\in\Omega_L:\text{dist}(x, N_{\phi^e})=\delta\}$. 
\end{proof}

\quad Next, we prove that $\varphi^{\eps,e}$ and $\varphi^{\eps,o}$ share the same sign as $\phi^{\eps}$ and $\phi^o$ in the two connected components of $\Omega_4^{\delta}(\eps)$. This is accomplished by using Lemmas \ref{lem:Omega4}, \ref{lem:SameNumZeros}, and \ref{lem:BCM} to show that neither eigenfunction can have a nodal domain entirely contained within $\Omega_4^\delta(\eps)$.

\begin{lemma}\label{lem:NbhdofP0}
There exists a constant $\delta_0>0$, independent of $\eps>0$, such that for each $0<\delta<\delta_0$, we can find $\eps_0>0$ so that for $0<\eps<\eps_0$,
    \begin{enumerate}
        \item $\varphi^{\epsilon,e}$ has the same sign as $\phi^e(p_0)$ in  $\Omega_4^\delta(\epsilon)\cap \left\{x<\frac{1}{2}\right\}$;
        \item $\varphi^{\epsilon,o}$ has the same sign as $\phi^o(p_0)$ in $\Omega_4^\delta(\epsilon)\cap\{x<\frac{1}{2}\}$.
    \end{enumerate}
    The analogous statements hold in $\Omega_4^\delta(\epsilon)\cap \{x>\frac{1}{2}\}$.
\end{lemma}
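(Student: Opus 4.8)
The approach is by contradiction. Normalizing so that $\phi^e(p_0)>0$ (and hence $\phi(p_0)=\phi^e(p_0)=\phi^o(p_0)>0$), suppose that $\varphi^{\epsilon,e}$ is negative at some point $q$ of $B\eqdef\Omega_4^\delta(\epsilon)\cap\{x<\tfrac12\}$. For $\delta<\delta_0$ small and $\epsilon<\delta/4$ this region is exactly $\{x^2+y^2<\delta^2\}\cap\Omega_\epsilon$, and I write $B_L=B\cap\Omega_L$, $A=B\cap R_\epsilon$; note that $A\subseteq\Omega_4^\delta(\epsilon)\cap R_\epsilon$, that $A$ and $B_L$ are joined only through the ``door'' $\{0\}\times(0,\epsilon g(0))$, which lies in the interior of $B$, and that $\text{Area}(B)\le\pi\delta^2$. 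Let $D$ be the nodal domain of $\varphi^{\epsilon,e}$ containing $q$, so $\varphi^{\epsilon,e}<0$ throughout $D$. The plan is to show $D\subseteq B$ and then contradict its smallness using Lemmas \ref{lem:Omega4} and \ref{lem:BCM}.

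The part of $\partial B$ interior to $\Omega_\epsilon$ is the circle $\{x^2+y^2=\delta^2\}$ intersected with $\Omega_\epsilon$, which splits into the bulk arc $a_L=\{x^2+y^2=\delta^2\}\cap\Omega_L$ and $\ell_0=\{x^2+y^2=\delta^2\}\cap R_\epsilon$, the latter being the left component of $\partial\Omega_3^\delta(\epsilon)\cap R_\epsilon$. I claim $\varphi^{\epsilon,e}>0$ on both. On $\ell_0$ this is the final assertion of Lemma \ref{lem:SameNumZeros}. On $a_L$ I would argue by elliptic regularity as in the proof of Lemma \ref{lem:nodal-bulk}: by Assumption \ref{ass:phi} we may shrink $\delta_0$ so that $\phi^e\ge m_\delta>0$ on the compact arc $\overline{a_L}$ with $m_\delta$ independent of $\epsilon$, and $\overline{a_L}$ lies at distance $\ge\delta/2$ from the door and away from the vertices of $\Omega_L$; since Theorems \ref{thm:EigVar} and \ref{thm:EigFnVar} give $\Delta(\varphi^{\epsilon,e}-\phi^e)=-\mu(\varphi^{\epsilon,e}-\phi^e)-(\lambda^{\epsilon,e}-\mu)\varphi^{\epsilon,e}=O(\epsilon)$ in $L^2(\Omega_L)$, with $\partial_\nu(\varphi^{\epsilon,e}-\phi^e)=0$ on $\partial\Omega_L$ away from the door, the interior and up-to-the-smooth-boundary $W^{2,2}$ estimates (with the Sobolev embedding in dimension two) yield $\norm{\varphi^{\epsilon,e}-\phi^e}_{C^0(\overline{a_L})}\to0$, so $\varphi^{\epsilon,e}>0$ on $\overline{a_L}$ once $\epsilon$ is small. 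Since $D$ is connected, meets $B$, is contained in $\Omega_\epsilon$, and $\varphi^{\epsilon,e}>0$ on $a_L\cup\ell_0$ while $\varphi^{\epsilon,e}<0$ on $D$, the set $D$ cannot meet $\partial B$, so $D\subseteq B$; in particular $D\cap R_\epsilon\subseteq A\subseteq\Omega_4^\delta(\epsilon)$ and $\text{Area}(D)\le\pi\delta^2$.

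Now split according to the $L^2$-mass of $\varphi^{\epsilon,e}$ on $D$. If $\norm{\varphi^{\epsilon,e}}_{L^2(D\cap(\Omega_L\cup\Omega_R))}\ge\tfrac12\norm{\varphi^{\epsilon,e}}_{L^2(D)}$, then Lemma \ref{lem:BCM} gives $\text{Area}(D)\ge c^*$, contradicting $\text{Area}(D)\le\pi\delta^2$ once $\delta_0$ is chosen with $\pi\delta_0^2<c^*$. Otherwise $\norm{\varphi^{\epsilon,e}}_{L^2(D\cap R_\epsilon)}\ge\tfrac12\norm{\varphi^{\epsilon,e}}_{L^2(D)}$, and since $D\cap R_\epsilon\subseteq\Omega_4^\delta(\epsilon)$ this is exactly the hypothesis of Lemma \ref{lem:Omega4}, whose conclusion is that the projection of $D\cap R_\epsilon$ onto the $x$-axis has length $\ge\tilde\delta$; but $D\cap R_\epsilon\subseteq\{0<x<\delta\}$, so that length is $<\delta\le\delta_0\le\tilde\delta$, again a contradiction. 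Hence no such $q$ exists, i.e.\ $\varphi^{\epsilon,e}\ge0$ on $B$, and the strong maximum principle together with unique continuation and the Hopf lemma along the flat part of $\partial\Omega_\epsilon$ (applied to $-\Delta\varphi^{\epsilon,e}=\lambda^{\epsilon,e}\varphi^{\epsilon,e}\ge0$ on $B$) upgrades this to $\varphi^{\epsilon,e}>0$ on all of $\Omega_4^\delta(\epsilon)\cap\{x<\tfrac12\}$. The argument for $\varphi^{\epsilon,o}$ is identical since $\phi^o(p_0)=\phi(p_0)\ne0$, and the statements on $\Omega_4^\delta(\epsilon)\cap\{x>\tfrac12\}$ follow by the reflection $x\mapsto1-x$, which fixes $\Omega_\epsilon$, preserves $\varphi^{\epsilon,e}$, negates $\varphi^{\epsilon,o}$, and interchanges $p_0$ with $p_1$ (noting $\phi^e(p_1)=\phi^e(p_0)$ and $\phi^o(p_1)=-\phi^o(p_0)$).

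The main obstacle is the positivity of $\varphi^{\epsilon,e}$ on the bulk arc $a_L$: this arc lies within distance $\delta$ of the attachment point $p_0$, where $\varphi^{\epsilon,e}$ fails the Neumann condition because the neck is glued on. What makes the elliptic estimate go through is that the door has length $O(\epsilon)\ll\delta$, so $a_L$ stays a fixed distance from it and the interior regularity for the $\Omega_L$-problem is unaffected. Everything after the two barrier facts — trapping $D$ inside $B$ and then running the area/projection dichotomy — is routine given Lemmas \ref{lem:Omega4}, \ref{lem:SameNumZeros}, and \ref{lem:BCM}.
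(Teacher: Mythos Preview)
Your proof is correct and follows essentially the same approach as the paper's: both trap a hypothetical wrong-sign nodal domain inside the disc $B$ by establishing positivity on the semicircle arc $a_L$ (via elliptic regularity and Theorem \ref{thm:EigFnVar}) and on the neck cross-section (via Lemma \ref{lem:SameNumZeros}), then derive a contradiction from the dichotomy furnished by Lemmas \ref{lem:BCM} and \ref{lem:Omega4}. The only notable difference is bookkeeping: the paper applies Lemma \ref{lem:SameNumZeros} at the fixed inner scale $\tilde\delta/2$, trapping the nodal domain in $\{x<\tilde\delta\}$, whereas you apply it at the outer scale $\delta$ and therefore need $\delta_0\le\tilde\delta$; both choices produce a valid $\delta_0$. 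Your explicit strong-maximum-principle/unique-continuation step to upgrade $\varphi^{\eps,e}\ge0$ to strict positivity is a detail the paper leaves implicit.
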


\quad Note that Assumption \ref{ass:phi} guarantees that $\phi^e(p_0)$ and $\phi^o(p_0)$ are nonzero. If $N_e, N_o$ are as in Lemma \ref{lem:SameNumZeros}, then Lemma \ref{lem:NbhdofP0} allows us to conclude that the number of nodal domains of $\varphi^{\eps,e}$ in $R_\eps$ is equal to $N_e+1$ and the number of nodal domains of $\varphi^{\eps,o}$ in $R_\eps$ is equal to $N_o+1$. It also implies that the number of nodal domains of either eigenfunction in $\Omega_\eps$ is equal to the sum of the nodal domains in $R_\eps$ and in $\Omega_0=\Omega_L\cup\Omega_R$ minus the two nodal domains overcounted at either end of the neck.

\begin{proof}[Proof of Lemma \ref{lem:NbhdofP0}.]
We first fix $\delta_0>0$ so that the results of Lemmas \ref{lem:Partition} and \ref{lem:SameNumZeros} hold, and so that the area of $\Omega_4^\delta(\epsilon)\cap \Omega_L$ is smaller than the constant $c^*$ from Lemma \ref{lem:BCM}. Because $\Omega_L$ is locally flat near $p_0$, for each $0<\delta<\delta_0$ there exists $\eps_0>0$ so that for $0<\eps<\eps_0$, we can use elliptic regularity and Theorem \ref{thm:EigFnVar} to conclude that $\varphi^{\epsilon,e}$ and $\varphi^{\epsilon,o}$ have the same signs as $\phi^e(p_0)$ and $\phi^o(p_0)$ respectively along the semicircle portion of the boundary $\p\Omega_4^\delta(\epsilon)\cap \Omega_L$. 

\quad We let $\tilde{\delta}>0$ be the constant from Lemma \ref{lem:Omega4} and apply Lemma \ref{lem:SameNumZeros} with $\delta = \tilde{\delta}/2$ (reducing $\tilde{\delta}$ if necessary to ensure that $\tilde{\delta}/2<\delta_0$). Decreasing $\eps_0$ if necessary, this ensures that for $0<\eps<\eps_0$, $\varphi^{\epsilon,e}$ and $\varphi^{\epsilon,o}$ have the same signs as $\phi^e(p_0)$ and $\phi^o(p_0)$ on the portion of $\Omega_4^{\delta}(\eps)\cap\Omega_\eps$ with $x>\tilde{\delta}$.

\quad If the conclusion of Lemma \ref{lem:NbhdofP0} fails, then $\varphi^{\eps,e}$ or $\varphi^{\eps,o}$ must have a nodal domain completely contained in the portion of $\Omega_4^{\delta}(\eps)\cap\Omega_\eps$ with $x<\tilde{\delta}$. In this case, Lemma \ref{lem:BCM} implies that at least half of the $L^2$ mass of the eigenfunction restricted to this nodal domain must be contained in the portion of $\Omega_4^{\delta}(\eps)\cap\Omega_\eps$ contained in $R_\eps$. By Lemma \ref{lem:Omega4}, such a nodal domain must extend past $x=\tilde{\delta}$, giving a contradiction and completing this proof.
\end{proof}

\quad Combining Lemmas \ref{lem:nodal-bulk} and \ref{lem:NbhdofP0} allows us to obtain an upper bound on the number of nodal domains of $\varphi^{\eps,e}$ and $\varphi^{\eps,o}$ that intersect $\Omega_L$ and $\Omega_R$ in terms of $\llbracket \phi,\Omega_L\rrbracket$, the nodal domain count of $\phi$ in $\Omega_L$, as outlined in the following statement.

\begin{prop} \label{prop:Omega-L}
  Let $\eta_e$ be the number of nodal domains of $\phi^e$ in $\Omega_L$. There exists a constant $\eps_0>0$ such that for $0<\epsilon\leq\eps_0$, $\llbracket\varphi^{\epsilon,e},\Omega_L\rrbracket\leq \eta_e.$ 
  %has at most $\eta_e$ nodal domains intersecting $\Omega_L$. 
  Moreover, if $\phi^e$ has no nodal crossings, then $\llbracket\varphi^{\eps,e},\Omega_L\rrbracket=\eta_e$. The analogous statements hold for $\varphi^{\epsilon,o}$ and $\phi^o$, and also in $\Omega_R$.
  %  There exists $\ell>0$, independent of $\eps$, such that for small enough $\epsilon$, the following statements are true:
   % \begin{enumerate}
   %     \item $\varphi_n^{\epsilon,e}$ has at most the same number of nodal domains in $\Omega_L\backslash B_{\ell}(p_0)$ as $\phi^e$.
   %     \item $\varphi_n^{\epsilon,o}$ has at most the same number of nodal domains in $\Omega_L\backslash B_{\ell}(p_0)$ as $\phi^o$.
   % \end{enumerate}
   % Moreover, if $\phi^e$, $\phi^o$ have no crossings in their nodal set, then we have equality in the above statements. 
\end{prop}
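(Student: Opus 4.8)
The plan is to bound the number of connected components of $\Omega_L\setminus N_{\varphi^{\eps,e}}$, where $N_{\varphi^{\eps,e}}$ denotes the nodal set, by ``anchoring'' each component to a piece of the set $\Omega_L(\phi^e,\delta)$ from Lemma~\ref{lem:nodal-bulk}. Fix $\delta>0$ smaller than all the thresholds $\delta_0$ appearing in Lemmas~\ref{lem:Partition}, \ref{lem:nodal-bulk} and \ref{lem:NbhdofP0}, and let $U_1,\dots,U_{\eta_e}$ be the nodal domains of $\phi^e$ in $\Omega_L$. Since each of the $\eta_e=\llbracket\phi^e,\Omega_L\rrbracket$ components of $\Omega_L(\phi^e,\delta)$ (Lemma~\ref{lem:nodal-bulk}(i)) lies in a single $U_i$, and each $U_i$, being open, contains at least one, a pigeonhole argument gives exactly one component $W_i$ of $\Omega_L(\phi^e,\delta)$ inside each $U_i$. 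By Lemma~\ref{lem:nodal-bulk}(iii), for $\eps$ small $\varphi^{\eps,e}$ has the constant sign $\sgn(\phi^e|_{U_i})$ on $W_i$, so each $W_i$ lies in a single component of $\Omega_L\setminus N_{\varphi^{\eps,e}}$; the bound $\llbracket\varphi^{\eps,e},\Omega_L\rrbracket\le\eta_e$ then follows once every component of $\Omega_L\setminus N_{\varphi^{\eps,e}}$ is shown to contain some $W_i$, since $W_i\mapsto(\text{its component})$ is then surjective. If a component $D'$ has closure missing the neck opening $S_0=\{(0,y):0<y<\eps g(0)\}\subset\pa\Omega_L$, then the part of $\pa D'$ inside $\Omega_\eps$ lies in $N_{\varphi^{\eps,e}}$, so $D'$ is a nodal domain of $\varphi^{\eps,e}$ on $\Omega_\eps$; Lemma~\ref{lem:nodal-bulk}(ii) then forbids $D'\subseteq\Omega_L\setminus\Omega_L(\phi^e,\delta)$, so $D'$ meets, hence contains, some $W_i$.

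It remains to treat the unique component whose closure meets $S_0$. By Assumption~\ref{ass:phi} and Lemma~\ref{lem:NbhdofP0}, for $\eps$ small $\varphi^{\eps,e}$ is nonvanishing with the constant sign $\sgn(\phi^e(p_0))$ on the connected set $\Omega_4^\delta(\eps)\cap\Omega_L$, whose closure contains $S_0$; hence there is exactly one such component $D_0'$, and $\Omega_4^\delta(\eps)\cap\Omega_L\subset D_0'$. Let $U_{i_0}$ be the nodal domain of $\phi^e$ containing $p_0$: by Assumption~\ref{ass:phi} this is well defined and contains a neighbourhood of $p_0$ in $\overline{\Omega_L}$, so $W_{i_0}\subset U_{i_0}$. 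Pick a path $\gamma$ in the open connected set $U_{i_0}$ joining a point of $\Omega_4^\delta(\eps)\cap\Omega_L$ to a point of $W_{i_0}$; its image is a compact subset of the open domain $\Omega_L$ and hence bounded away from $\pa\Omega_L$. Since $\varphi^{\eps,e}-\phi^e$ solves a Neumann problem on $\Omega_L$ with right-hand side tending to $0$ in $L^2(\Omega_L)$ (by Theorems~\ref{thm:EigVar} and \ref{thm:EigFnVar}), interior elliptic regularity gives $\varphi^{\eps,e}\to\phi^e$ uniformly on $\gamma$; as $|\phi^e|$ is bounded below along $\gamma$, for $\eps$ small $\varphi^{\eps,e}$ keeps a constant sign on $\gamma$, so $\gamma\subset D_0'$ and $W_{i_0}\subset D_0'$. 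This completes the inequality.

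For the equality statement, assume $\phi^e$ has no nodal crossings, so $N_{\phi^e}$ is a union of $\eta_e-1$ disjoint smooth arcs. By the ``moreover'' part of Lemma~\ref{lem:nodal-bulk}, for $\delta$ and then $\eps$ small the $\delta$-neighbourhood of $N_{\phi^e}$ splits into $\eta_e-1$ ``thickened arcs'' $V_1,\dots,V_{\eta_e-1}$, each $V_k$ bordered in the interior of $\Omega_L$ by two curves on which $\varphi^{\eps,e}$ has opposite signs (agreeing with the signs of $\phi^e$ on the two nodal domains adjacent across the arc in $V_k$). On the complement $\{x\in\Omega_L:\operatorname{dist}(x,N_{\phi^e})>\delta\}$, whose components $\widetilde U_i\supseteq W_i$ correspond one-to-one to the $U_i$, the quantity $|\phi^e|$ is bounded below, and there $\varphi^{\eps,e}$ has the same sign as $\phi^e$ — on $W_i$ by Lemma~\ref{lem:nodal-bulk}(iii), near $p_0$ by Lemma~\ref{lem:NbhdofP0}, and on the remaining collar of $\pa\Omega_L$ by up-to-the-boundary elliptic regularity for the Neumann problem, valid since the vertices of $\Omega_L$ at which $\phi^e$ vanishes lie inside $\bigcup_kV_k$. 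Thus $N_{\varphi^{\eps,e}}\cap\Omega_L\subset\bigcup_kV_k$ and $\varphi^{\eps,e}$ has constant sign on each $\widetilde U_i$. Any path in $\Omega_L\setminus N_{\varphi^{\eps,e}}$ entering a $V_k$ does so from one of its two bordering components and, because $\varphi^{\eps,e}$ changes sign across $V_k$, cannot reach the other bordering component without vanishing; so such a path never changes $\widetilde U_i$. Hence distinct $W_i$ lie in distinct components of $\Omega_L\setminus N_{\varphi^{\eps,e}}$, the surjection above is a bijection, and $\llbracket\varphi^{\eps,e},\Omega_L\rrbracket=\eta_e$.

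The main obstacle is precisely that Lemma~\ref{lem:nodal-bulk} controls $\varphi^{\eps,e}$ only on $\Omega_L(\phi^e,\delta)$, a set which by construction stays away from $\pa\Omega_L$ and hence from the neck attachment point $p_0$: bridging this gap — in the inequality through the connecting path $\gamma$, and in the equality case through the boundary elliptic estimate that confines $N_{\varphi^{\eps,e}}\cap\Omega_L$ to the thickened arcs $V_k$ — is the step requiring real work, and it is here that Assumption~\ref{ass:phi}, which forces $N_{\phi^e}$ to avoid a neighbourhood of $p_0$, is used essentially. Finally, the statements for $\varphi^{\eps,o}$ and for $\Omega_R$ follow from the identical argument, replacing $\phi^e$ by $\phi^o$ and $p_0$, $S_0$ by $p_1$ and the corresponding neck opening.
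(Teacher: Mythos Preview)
Your proof is correct and follows essentially the same strategy as the paper: both arguments anchor each nodal domain of $\varphi^{\eps,e}|_{\Omega_L}$ to one of the $\eta_e$ components $W_i$ of $\Omega_L(\phi^e,\delta)$ via Lemmas~\ref{lem:nodal-bulk} and~\ref{lem:NbhdofP0}, and both use the ``moreover'' part of Lemma~\ref{lem:nodal-bulk} to force a sign change of $\varphi^{\eps,e}$ across each thickened nodal arc in the no-crossings case.

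The one place where you go beyond the paper is worth highlighting. For the unique component $D_0'$ touching the neck opening, the paper simply asserts that ``by Lemma~\ref{lem:nodal-bulk}(ii) and Lemma~\ref{lem:NbhdofP0}'' it must meet $\Omega_L(\phi^e,\delta)$; but Lemma~\ref{lem:NbhdofP0} only places the half-disc $\Omega_4^\delta(\eps)\cap\Omega_L$ inside $D_0'$, and that half-disc is by construction disjoint from $\Omega_L(\phi^e,\delta)$, while Lemma~\ref{lem:nodal-bulk}(ii) applies only to full nodal domains on $\Omega_\eps$. Your path argument---choosing $\gamma\subset U_{i_0}$ from the half-disc to $W_{i_0}$ and using interior elliptic regularity together with the lower bound on $|\phi^e|$ along $\gamma$---fills this in cleanly and makes explicit why $W_{i_0}\subset D_0'$. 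This is a genuine clarification of a step the paper leaves to the reader.
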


\quad This result states that the number of nodal domains produced by an eigenfunction of (\ref{eq:PDE}) in the dumbbell domain cannot increase under small perturbations of the neck. In \cite{U76}, Uhlenbeck established the instability of nodal crossings under generic domain deformations, as, for example, quantified by \cite{BGM, L25} in the Dirichlet case. If a nodal crossing vanishes, then the number of nodal domains decreases, providing a geometric justification for the inequality in Proposition \ref{prop:Omega-L}. We find that, in the absence of nodal crossings, the number of nodal domains in $\Omega_L$ remains invariant for small $\eps>0$.

\begin{proof}[Proof of Proposition \ref{prop:Omega-L}.]
The same proof holds for $\varphi^{\epsilon,o}$ and $\phi^o$, and also in $\Omega_R$, so we focus on $\varphi^{\eps,e}$ and $\phi^e$ in $\Omega_L$. We fix $\delta$ and $\eps_0$ so that the conclusions of Lemmas \ref{lem:nodal-bulk} and \ref{lem:NbhdofP0} hold. For small enough $\eps$, if $\varphi^{\eps,e}$ has a nodal domain intersecting $\Omega_L$, then by  Lemma \ref{lem:nodal-bulk} part \textit{(ii)} and Lemma \ref{lem:NbhdofP0} it must also intersect $\Omega_L(\phi^e,\delta)$. Therefore, by Lemma \ref{lem:nodal-bulk} parts \textit{(i)} and \textit{(iii)}, $\varphi^{\eps,e}$ must have at most $\eta_e$ nodal domains intersecting $\Omega_L$.

\quad Now suppose that the nodal set of $\phi^e$ contains no crossings in the closure of $\Omega_L$. From Lemma \ref{lem:nodal-bulk}, the set $$\{x\in\Omega_L:\text{dist}(x,N_{\phi^e})\leq \delta\}$$ then consists of $\eta_e-1$ connected components, and for $0<\eps<\eps_0$, the function $\varphi^{\eps,e}$ changes from positive to negative on the two parts of the boundary of each of these components in the interior of $\Omega_L$. Therefore, the nodal set of $\varphi^{\eps,e}$ must contain at least $\eta_e-1$ smooth, disjoint curves in $\Omega_L$, and so $\varphi^{\eps,e}$ must have exactly $\eta_e$ nodal domains intersecting $\Omega_L$. 
\end{proof}

\quad The results in Lemma \ref{lem:SameNumZeros} and Proposition \ref{prop:Omega-L} collectively prove the main result of this section, Proposition \ref{prop:nodal-main}. Namely, if $\eta_e=\llbracket \phi^e,\Omega_L\rrbracket=\llbracket \phi^o, \Omega_L\rrbracket$,  then for small enough $\epsilon$, 
\begin{align*}
    \llbracket \varphi^{\eps,e},\Omega_\eps\rrbracket\leq 2\eta_e+N_e-1, \\
    \llbracket \varphi^{\eps,o},\Omega_\eps\rrbracket\leq 2\eta_e+N_o-1
\end{align*}
because neither eigenfunction has a nodal set that intersects $\Omega_4^\delta(\eps)$. Lemma \ref{lem:NumEvOd} provides the required relationships between $N_e,N_o$ and the relative position of $\mu$ against the Dirichlet spectrum of (\ref{eq:OntheInt2}). If $\phi$ has no crossings in its nodal set in $\Omega_L$, then we get the required equalities
\begin{align*}
    \llbracket \varphi^{\eps,e},\Omega_\eps\rrbracket= 2\eta_e+N_e-1,\\
    \llbracket \varphi^{\eps,o},\Omega_\eps\rrbracket = 2\eta_e+N_o-1.
\end{align*}
In particular, combining this with Corollary \ref{cor:index-main} in this case, the nodal deficiencies of the perturbed eigenfunctions over $\Omega_\eps$ are zero.

%%%%%%%%%%%%%%%%%%%%%%%%%%%

\bibliographystyle{plain}
\bibliography{Dumbbell}

@book {CL55,
    AUTHOR = {Coddington, Earl and Levinson, Norman},
     TITLE = {Theory of ordinary differential equations},
 PUBLISHER = {McGraw-Hill Book Co., Inc., New York-Toronto-London},
      YEAR = {1955},
     PAGES = {xii+429},
   MRCLASS = {36.0X},
  MRNUMBER = {69338},
MRREVIEWER = {M.\ Zl\'amal},
}

@article {A95,
    AUTHOR = {Arrieta, Jos\'e{} },
     TITLE = {Rates of eigenvalues on a dumbbell domain. {S}imple eigenvalue
              case},
   JOURNAL = {Trans. Amer. Math. Soc.},
  FJOURNAL = {Transactions of the American Mathematical Society},
    VOLUME = {347},
      YEAR = {1995},
    NUMBER = {9},
     PAGES = {3503--3531},
      ISSN = {0002-9947,1088-6850},
   MRCLASS = {35P15 (35J05 35P20)},
  MRNUMBER = {1297521},
MRREVIEWER = {G\"unter\ Berger},
       DOI = {10.2307/2155021},
       URL = {https://doi.org/10.2307/2155021},
}

@article {BCM-pleijel,
    AUTHOR = {Beck, Thomas and Canzani, Yaiza and Marzuola, Jeremy},
     TITLE = {Uniform upper bounds on {C}ourant sharp {N}eumann eigenvalues of chain domains},
   JOURNAL = {J. Geom. Anal.},
  FJOURNAL = {Journal of Geometric Analysis},
    VOLUME = {34},
      YEAR = {2024},
}

@article {faber1923,
    AUTHOR = {Faber, Georg},
     TITLE = {Beweis, dass unter allen homogenen {M}embranen von gleicher {F}lche und gleicher {S}pannung die kreisfrmige den tiefsten {G}rundton gibt},
   JOURNAL = {Math.-Phys. Kl.},
    PAGES = {169--172},
      YEAR = {1923},
}

@article {krahn1925,
    AUTHOR = {Krahn, Edgar},
     TITLE = {ber eine von {R}ayleigh formulierte {M}inimaleigenschaft des {K}reises},
   JOURNAL = { Math. Ann.},
    VOLUME = {94},
    PAGES = {97--100},
      YEAR = {1925},
}

@article {lena-pleijel,
    AUTHOR = {L\'{e}na, Corentin},
     TITLE = {Pleijel's nodal domain theorem for {N}eumann and {R}obin
              eigenfunctions},
   JOURNAL = {Ann. Inst. Fourier (Grenoble)},
  FJOURNAL = {Universit\'{e} de Grenoble. Annales de l'Institut Fourier},
    VOLUME = {69},
      YEAR = {2019},
    NUMBER = {1},
     PAGES = {283--301},
      ISSN = {0373-0956},
   MRCLASS = {35P05 (31B35 35P15 35P20 58J50)},
  MRNUMBER = {3973450},
MRREVIEWER = {Jean Lagac\'{e}},
       URL = {http://aif.cedram.org/item?id=AIF_2019__69_1_283_0},
}

@book {CH62,
    AUTHOR = {Courant, Richard and Hilbert, David},
     TITLE = {Methods of mathematical physics. {V}ol. {II}: {P}artial
              differential equations},
      NOTE = {(Vol. II by R. Courant.)},
 PUBLISHER = {Interscience Publishers (a division of John Wiley \& Sons,
              Inc.), New York-London},
      YEAR = {1962},
     PAGES = {xxii+830},
   MRCLASS = {35.00},
  MRNUMBER = {140802},
MRREVIEWER = {M.\ J.\ Lighthill},
}

@article {L25,
    AUTHOR = {Lyons, Andrew},
     TITLE = {Nodal sets of {L}aplacian eigenfunctions with an eigenvalue of
              multiplicity 2},
   JOURNAL = {Ann. Math. Qu\'e.},
  FJOURNAL = {Annales Math\'ematiques du Qu\'ebec},
    VOLUME = {49},
      YEAR = {2025},
    NUMBER = {1},
     PAGES = {105--153},
      ISSN = {2195-4755,2195-4763},
   MRCLASS = {35P05 (35J05 35J25)},
  MRNUMBER = {4894860},
       DOI = {10.1007/s40316-024-00227-6},
       URL = {https://doi.org/10.1007/s40316-024-00227-6},
}

@article {BGM,
    AUTHOR = {Beck, Thomas and Gupta, Marichi and Marzuola, Jeremy},
     TITLE = {Nodal set openings of perturbed rectangular domains},
   JOURNAL = {Ann. Henri Poincar\'e},
  FJOURNAL = {},
    VOLUME = {25},
      YEAR = {2024},
    NUMBER = {11},
     PAGES = {4889-4929},
      ISSN = {},
   MRCLASS = {},
  MRNUMBER = {},
MRREVIEWER = {},
       DOI = {
https://doi.org/10.48550/arXiv.2204.12009},
       URL = {	arXiv:2204.12009},
}

@article {P56,
    AUTHOR = {Pleijel, \AA{}ke},
     TITLE = {Remarks on {C}ourant's nodal line theorem},
   JOURNAL = {Comm. Pure Appl. Math.},
  FJOURNAL = {Communications on Pure and Applied Mathematics},
    VOLUME = {9},
      YEAR = {1956},
     PAGES = {543--550},
      ISSN = {0010-3640,1097-0312},
   MRCLASS = {35.0X},
  MRNUMBER = {80861},
MRREVIEWER = {L.\ E.\ Payne},
       DOI = {10.1002/cpa.3160090324},
       URL = {https://doi.org/10.1002/cpa.3160090324},
}

@article {J93,
    AUTHOR = {Jimbo, Shuichi},
     TITLE = {Perturbation formula of eigenvalues in a singularly perturbed
              domain},
   JOURNAL = {J. Math. Soc. Japan},
  FJOURNAL = {Journal of the Mathematical Society of Japan},
    VOLUME = {45},
      YEAR = {1993},
    NUMBER = {2},
     PAGES = {339--356},
      ISSN = {0025-5645,1881-1167},
   MRCLASS = {35P20 (35B25 47A55 47F05)},
  MRNUMBER = {1206658},
MRREVIEWER = {Serge\ Nicaise},
       DOI = {10.2969/jmsj/04520339},
       URL = {https://doi.org/10.2969/jmsj/04520339},
}

@article {U76,
    AUTHOR = {Uhlenbeck, Karen},
     TITLE = {Generic properties of eigenfunctions},
   JOURNAL = {Amer. J. Math.},
  FJOURNAL = {American Journal of Mathematics},
    VOLUME = {98},
      YEAR = {1976},
    NUMBER = {4},
     PAGES = {1059--1078},
      ISSN = {0002-9327,1080-6377},
   MRCLASS = {58G99 (58C25)},
  MRNUMBER = {464332},
MRREVIEWER = {A.\ J.\ Tromba},
       DOI = {10.2307/2374041},
       URL = {https://doi.org/10.2307/2374041},
}

@book {Z17,
    AUTHOR = {Zelditch, Steve},
     TITLE = {Eigenfunctions of the {L}aplacian on a {R}iemannian manifold},
    SERIES = {},
    VOLUME = {125},
 PUBLISHER = {Amer. Math. Soc.},
      YEAR = {2017},
     PAGES = {xiv+394},
      ISBN = {978-1-4704-1037-7},
   MRCLASS = {58J50 (31C12 35J05 35L05 35P20 35R01 58J40)},
  MRNUMBER = {3729409},
MRREVIEWER = {G.\ V.\ Rozenblum},
}

@article {C76,
    AUTHOR = {Cheng, Shiu-Yuen},
     TITLE = {Eigenfunctions and nodal sets},
   JOURNAL = {Comm. Math. Helvetici},
  FJOURNAL = {Commentarii Mathematici Helvetici},
    VOLUME = {51},
      YEAR = {1976},
    NUMBER = {},
     PAGES = {43-55},
      ISSN = {},
   MRCLASS = {},
  MRNUMBER = {},
       DOI = {},
       URL = {https://doi.org/10.1007/BF02568142},
}

@article {D03,
    AUTHOR = {Daners, Daniel},
     TITLE = {Dirichlet problems on varying domains},
   JOURNAL = {J. Diff. Equations},
  FJOURNAL = {Journal of Differential Equations},
    VOLUME = {188},
      YEAR = {2003},
    NUMBER = {2},
     PAGES = {591--624},
      ISSN = {0022-0396,1090-2732},
   MRCLASS = {35J25 (35B25 47F05)},
  MRNUMBER = {1955096},
MRREVIEWER = {Florin\ Iacob},
       DOI = {10.1016/S0022-0396(02)00105-5},
       URL = {https://doi.org/10.1016/S0022-0396(02)00105-5},
}

@article {GS18,
    AUTHOR = {Trillos, Nicol\'as Garc\'ia and Slep{\v{c}}ev, Dejan},
     TITLE = {A variational approach to the consistency of spectral
              clustering},
   JOURNAL = {Appl. Comput. Harmon. Anal.},
  FJOURNAL = {Applied and Computational Harmonic Analysis. Time-Frequency
              and Time-Scale Analysis, Wavelets, Numerical Algorithms, and
              Applications},
    VOLUME = {45},
      YEAR = {2018},
    NUMBER = {2},
     PAGES = {239--281},
      ISSN = {1063-5203,1096-603X},
   MRCLASS = {49J55 (49J45 60D05 62G20 68R10)},
  MRNUMBER = {3824112},
       DOI = {10.1016/j.acha.2016.09.003},
       URL = {https://doi.org/10.1016/j.acha.2016.09.003},
}

@article {P09,
    AUTHOR = {Polterovich, Iosif},
     TITLE = {Pleijel's nodal domain theorem for free membranes},
   JOURNAL = {Proc. Amer. Math. Soc.},
  FJOURNAL = {Proceedings of the American Mathematical Society},
    VOLUME = {137},
      YEAR = {2009},
    NUMBER = {3},
     PAGES = {1021--1024},
      ISSN = {0002-9939,1088-6826},
   MRCLASS = {35P15 (35J05)},
  MRNUMBER = {2457442},
MRREVIEWER = {J.\ B.\ Kennedy},
       DOI = {10.1090/S0002-9939-08-09596-8},
       URL = {https://doi.org/10.1090/S0002-9939-08-09596-8},
}

@incollection {HH13,
    AUTHOR = {Helffer, Bernard and Hoffmann-Ostenhof, Thomas},
     TITLE = {Spectral minimal partitions for a thin strip on a cylinder or
              a thin annulus like domain with {N}eumann condition},
 BOOKTITLE = {Operator methods in mathematical physics},
    SERIES = {Oper. Theory Adv. Appl.},
    VOLUME = {227},
     PAGES = {107--115},
 PUBLISHER = {Birkh\"auser/Springer Basel AG, Basel},
      YEAR = {2013},
      ISBN = {978-3-0348-0530-8; 978-3-0348-0531-5},
   MRCLASS = {35P15 (35B05 35J05)},
  MRNUMBER = {3050161},
MRREVIEWER = {Salvador\ Villegas},
       DOI = {10.1007/978-3-0348-0531-5\_5},
       URL = {https://doi.org/10.1007/978-3-0348-0531-5_5},
}

@incollection {BH17,
    AUTHOR = {Bonnaillie-No\"el, Virginie and Helffer, Bernard},
     TITLE = {Nodal and spectral minimal partitions---the state of the art
              in 2016},
 BOOKTITLE = {Shape optimization and spectral theory},
     PAGES = {353--397},
 PUBLISHER = {De Gruyter Open, Warsaw},
      YEAR = {2017},
      ISBN = {978-3-11-055088-7; 978-3-11-055085-6},
   MRCLASS = {58J50 (49R05)},
  MRNUMBER = {3681154},
MRREVIEWER = {Mohammed\ El A\"idi, Universidad Nacional de Colombia},
       DOI = {10.1515/9783110550887-010},
       URL = {https://doi.org/10.1515/9783110550887-010},
}

@article {BBF17,
    AUTHOR = {Band, Ram and Bersudsky, Michael and Fajman, David},
     TITLE = {Courant-sharp eigenvalues of {N}eumann 2-rep-tiles},
   JOURNAL = {Lett. Math. Phys.},
  FJOURNAL = {Letters in Mathematical Physics},
    VOLUME = {107},
      YEAR = {2017},
    NUMBER = {5},
     PAGES = {821--859},
      ISSN = {0377-9017,1573-0530},
   MRCLASS = {35P20 (35J05 58C40 58J50)},
  MRNUMBER = {3633026},
       DOI = {10.1007/s11005-016-0926-7},
       URL = {https://doi.org/10.1007/s11005-016-0926-7},
}

@article {HH14,
    AUTHOR = {Helffer, Bernard and Hoffmann-Ostenhof, Thomas},
     TITLE = {Minimal partitions for anisotropic tori},
   JOURNAL = {J. Spectr. Theory},
  FJOURNAL = {Journal of Spectral Theory},
    VOLUME = {4},
      YEAR = {2014},
    NUMBER = {2},
     PAGES = {221--233},
      ISSN = {1664-039X,1664-0403},
   MRCLASS = {58C40 (49Q10)},
  MRNUMBER = {3232810},
MRREVIEWER = {Tan\ Zhang},
       DOI = {10.4171/JST/68},
       URL = {https://doi.org/10.4171/JST/68},
}

@incollection {HHT10,
    AUTHOR = {Helffer, Bernard and Hoffmann-Ostenhof, Thomas and Terracini,
              Susanna},
     TITLE = {On spectral minimal partitions: the case of the sphere},
 BOOKTITLE = {Around the research of {V}ladimir {M}az'ya. {III}},
    SERIES = {Int. Math. Ser. (N. Y.)},
    VOLUME = {13},
     PAGES = {153--178},
 PUBLISHER = {Springer, New York},
      YEAR = {2010},
      ISBN = {978-1-4419-1344-9; 978-5-9018-1345-6},
   MRCLASS = {35P05 (47F05 58J50)},
  MRNUMBER = {2664708},
MRREVIEWER = {B.\ Hellwig},
       DOI = {10.1007/978-1-4419-1345-6\_6},
       URL = {https://doi.org/10.1007/978-1-4419-1345-6_6},
}

@article {BS16,
    AUTHOR = {Helffer, Bernard and Persson Sundqvist, Mikael},
     TITLE = {On nodal domains in {E}uclidean balls},
   JOURNAL = {Proc. Amer. Math. Soc.},
  FJOURNAL = {Proceedings of the American Mathematical Society},
    VOLUME = {144},
      YEAR = {2016},
    NUMBER = {11},
     PAGES = {4777--4791},
      ISSN = {0002-9939,1088-6826},
   MRCLASS = {35P05 (35B05 35J05 35P20 58J50)},
  MRNUMBER = {3544529},
MRREVIEWER = {Rupert\ L.\ Frank},
       DOI = {10.1090/proc/13098},
       URL = {https://doi.org/10.1090/proc/13098},
}

@article{singer2017spectral,
  title={Spectral convergence of the connection {L}aplacian from random samples},
  author={Singer, Amit and Wu, Hau-Tieng},
  journal={Information and Inference: A Journal of the IMA},
  volume={6},
  number={1},
  pages={58--123},
  year={2017},
  publisher={Oxford University Press}
}

@article{deponti23pleijel,
  title={Pleijel nodal domain theorem in non-smooth setting},
  author={De Ponti, Nicol\`o and Farinelli, Sara and Yuri Violo, Ivan},
  journal={Trans. Amer. Math. Soc. Ser. B},
  volume={11},
  pages={1138--1182},
  year={2024}
}

@article{berkolaiko2012,
  title={Critical partitions and nodal deficiency of billiard eigenfunctions},
  author={Berkolaiko, Gregory and Kuchment, Peter and Smilansky, Uri},
  journal={Geom. Funct. Anal.},
  volume={22},
  number={6},
  pages={1517--1540},
  year={2012}
}

@article{berkolaiko2019,
  title={Nodal deficiency, spectral flow, and the {D}irichlet-to-{N}eumann map},
  author={Berkolaiko, Gregory and Cox, Graham and Marzuola, Jeremy},
  journal={Lett. Math. Phys.},
  volume={109},
  number={7},
  pages={1611--1623},
  year={2019}
}

@article{berkolaiko2022-1,
  title={Spectral minimal partitions, nodal deficiency and the {D}irichlet-to-{N}eumann map: the generic case},
  author={Berkolaiko, Gregory and Canzani, Yaiza and Cox, Graham and Marzuola, Jeremy},
  journal={Calc. Var. PDE},
  volume={61},
  number={203},
  year={2022}
}

@article{berkolaiko2022-2,
  title={Computing nodal deficiency with a refined {D}irichlet-to-{N}eumann map},
  author={Berkolaiko, Gregory and Cox, Graham and Helffer, Bernard and Sundqvist, Mikael Persson},
  journal={J. Geom. Anal.},
  volume={32},
  number={246},
  year={2022}
}

@article{MS23,
      title={Eigenfunction localization and nodal geometry on dumbbell domains}, 
      author={Saikat Maji and Soumyajit Saha},
      year={2023},journal={Preprint, arXiv:2309.11441},
      eprint={2309.11441},
      archivePrefix={arXiv},
      primaryClass={math.AP},
      url={https://arXiv.org/abs/2309.11441}, 
}

@article{MMS25,
      title={On the nodal domain count under metric perturbations, {C}ourant sharpness and boundary intersections}, 
      author={Saikat Maji and Mayukh Mukherjee and Soumyajit Saha},
      year={2025},journal={Preprint, arXiv:2507.04928},
      eprint={2507.04928},
      archivePrefix={arXiv},
      primaryClass={math.AP},
      url={https://arXiv:2507.04928}, 
}

%%%%%%%%%%%%%%%%%%%%%%%%%%%

\end{document}